\documentclass[a4paper,12pt]{amsart}
\usepackage[utf8]{inputenc}
\usepackage{fullpage}
\usepackage{tikz-cd}
\usepackage[T1]{fontenc}
\usepackage{amssymb}
\usepackage[all]{xy}
\usepackage{graphicx}
\usepackage{amsmath}
\usepackage{amsthm}
\usepackage{float}
\usepackage{makecell}
\usepackage{bookmark}
\usepackage[top=3cm,bottom=3cm,left=3cm,right=3cm]{geometry}
\usepackage{tikz}
\usepackage{tikzit}

\tikzstyle{whitedot}=[fill=white, draw=black, shape=circle, inner sep=2pt]
\tikzstyle{blackdot}=[fill=black, draw=black, shape=circle, inner sep=2pt]
\tikzstyle{smallblackdot}=[fill=black, draw=black, shape=circle, inner sep=0pt]

\tikzstyle{red}=[-, draw=red]
\tikzstyle{green}=[-, draw={rgb,255: red,0; green,217; blue,0}]
\tikzstyle{blue}=[-, draw=blue]
\tikzstyle{thick}=[-, thick]

\usetikzlibrary{shapes.geometric}
\usepackage{mathtools}
\usepackage{marginnote}
\usepackage{multicol}
\usepackage{multirow}
\usepackage{rotating}
\usepackage{tabularx}
\usepackage{lscape}
\usepackage{stackengine}
\stackMath
\usepackage{arydshln}
\usepackage{array}
\usepackage{mathtools}
\usepackage{babel}
\usepackage{amsaddr}

\DeclareMathOperator{\Pic}{Pic}

\DeclareMathOperator{\rank}{rk}

\DeclareMathOperator{\NS}{NS}
\DeclareMathOperator{\MW}{MW}

\DeclareMathOperator{\Aut}{Aut}

\DeclareMathOperator{\id}{id}

\DeclareMathOperator{\Fix}{Fix}

\DeclareMathOperator{\W}{W}

\title{Jacobian elliptic fibrations on K3s with a non-symplectic automorphism of order 3}
\author{Felipe Zingali Meira}
\address{Universidade Federal do Rio de Janeiro, Rijksuniversiteit Groningen}
\email{f.zingali.meira@rug.nl}
\date{06/2024}

\newtheorem{thm}{Theorem}[section]
\newtheorem{lema}[thm]{Lemma}
\newtheorem{corollary}[thm]{Corollary}
\newtheorem{assumption}[thm]{Assumption}
\newtheorem*{acknowledge}{Acknowledgements}
\newtheorem{prop}[thm]{Proposition}

\theoremstyle{definition}
\newtheorem{dfn}[thm]{Definition}
\newtheorem{remark}[thm]{Remark}
\newtheorem{exemp}[thm]{Example}

\newcommand{\cB}{\mathcal B}         \newcommand{\cO}{\mathcal O}
\newcommand{\cC}{\mathcal C}         
         
\newcommand{\cE}{\mathcal E}         
\newcommand{\cF}{\mathcal F}         
\newcommand{\cG}{\mathcal G}

\newcommand{\cJ}{\mathcal J}

\newcommand{\ZZ}{\mathbb{Z}}
\newcommand{\QQ}{\mathbb{Q}}

\newcommand{\CC}{\mathbb{C}}

\newcommand{\PP}{\mathbb{P}}

\begin{document}

\maketitle

\begin{abstract}
We classify Jacobian elliptic fibrations on K3 surfaces with a non-symplectic automorphism $\sigma$ of order 3 according to the action of $\sigma$ on their fibres, building on work by Garbagnati and Salgado for non-symplectic involutions. We determine the possible reducible fibres types and give Weierstrass equations for Jacobian elliptic fibration which are preserved by $\sigma$. For a K3 surface $X$ with Picard number at least 12 and $\sigma$ acting trivially on $\NS X$, we apply the Kneser--Nishiyama method to find its Jacobian elliptic fibrations, and use our method to classify them in relation to every non-symplectic automorphism of order 3 in $\Aut X$.
\end{abstract}

\section{Introduction}

A regular algebraic surface with trivial canonical bundle is called a K3 surface. This class of surfaces forms a central topic in modern algebraic geometry, lying between the well understood class of rational surfaces and surfaces of general type. A surjective map from a K3 surface to $\PP^1$ is an elliptic fibration if all but finitely many of its fibres are elliptic curves. Moreover, we say that an elliptic fibration is Jacobian if it has a section. One of the many distinguishing properties of K3 surfaces is that they are the only class of surfaces which might admit more than one relatively minimal Jacobian elliptic fibration which is not of product type. It is thus natural to classify the Jacobian elliptic fibrations in K3 surfaces.

\sloppy There are different ways of defining an equivalence between two fibrations, each leading to a different classification. This is discussed in detail in \cite{BKW}. Furthermore, each classification involves different methods. Building on  work by Oguiso in \cite{Oguiso}, and expanded in \cite{Kloosterman} and \cite{Comparin-Garbagnati}, Garbagnati and Salgado developed a classification method for K3 surfaces $X$ with an involution $\iota$ which is non-symplectic, i.e. $\iota$ acts non-trivially in $H^{2,0}(X)$ (see \cite{GS1}, \cite{GS2}, \cite{GS3}). When the fixed locus of $\iota$ in nonempty, $X/\iota$ is a rational surface and every Jacobian elliptic fibration of $X$ corresponds to a linear system in the quotient. The method consists of separating the Jacobian elliptic fibrations of $X$ in three distinct types according to the action of $\iota$ on their fibres. For each type, we deduce properties of the induced linear system on $X/\iota$. Thus, we effectively reduce the classification of fibrations on $X$ to particular kinds of linear systems on a rational surface.

\subsection{Our contribution}
In this paper, we work torwards a generalization of the results of Garbagnati and Salgado to K3 surfaces $X$ with non-symplectic automorphisms $\sigma$ of any prime order. We focus on the case where the order of $\sigma$ is 3, as surfaces admitting such automorphisms usually allow for more types of elliptic fibrations compared to those with non-symplectic automorphisms of higher prime orders (see Proposition \ref{condition_type_1}). When $\cE \colon X \to \PP^1$ is a Jacobian elliptic fibration in $X$ and its fibre class $F$ is fixed by $\sigma$ in $\NS X$, this method allows us to find a Weierstrass equation for $\cE$ (see Propositions \ref{equation_type2} and \ref{equation_type1}).

We apply the Kneser--Nishiyama method to K3 surfaces with Picard number at least 12 and a non-symplectic automorphism of order 3 acting trivially on the Nerón--Severi group. Then, we use our methods to classify each fibration in relation to \textit{every} non-symplectic automorphism of the surface.

\subsection{Relevance and advantages of the method}
The reinterpretation of elliptic fibrations on well-known K3 surfaces, e.g. the most algebraic K3 according to Vinberg (see \cite{Vinberg}), in the light of a chosen non-symplectic automorphism of order 3 allows us to produce Weierstrass equations for fibrations that could not be obtained by realizing the surface as a degree two cover of a rational surface.
The classification of surfaces of degree 3 with relation to this automorphism also paves the way towards loosening the conditions on the constructions in \cite{Ohashi-Taki} and \cite{DilliesThesis} to the existence of fibrations of type 1 and 2 respectively (see Subsection \ref{subsec: applications}).

\subsection{Other Applications}\label{subsec: applications}
Jacobian elliptic fibrations on K3s with a non-symplectic automorphism $\sigma$ of order 3 were used for geometric constructions of other classes of varieties.
In \cite{Ohashi-Taki}, with the assumption that $\sigma$ fixes a curve of genus $g \geq 2$, they were used to construct log del Pezzo surfaces of index 3. In \cite{DilliesThesis}, with the assumption that $\sigma$ fixes a curve of genus 1, they were used to generalize the Borcea--Voisin construction of Calabi--Yau threefolds (see \cite{Borcea},\cite{Voisin}). 

\subsection{Organization of the paper}
In Section \ref{section_preliminaries}, we introduce elliptic fibrations on algebraic surfaces and discuss their possible reducible fibres. Then, we turn our discussion to K3 surfaces, stating facts about their elliptic fibrations and automorphisms. We give a brief summary of the distinct ways of classifying Jacobian elliptic fibrations on K3 surfaces. Finally, we present different kinds of linear systems which appear on rational elliptic surfaces and play a key role in the classification exhibited in \cite{GS1}.

In Section \ref{K3_surfaces_ns_order3}, we classify the Jacobian elliptic fibrations in K3 surfaces with regard to a fixed non-symplectic automorphism of order 3. We study a rational surface given by the quotient of our K3 by this non-symplectic automorphism, where each fibration induces a linear system. We prove that the kind of induced linear systems depends exclusively on the type of the fibration (see Theorem \ref{class_linearsys}). Furthermore, for fibrations in which the fibre class is fixed by the non-symplectic automorphism, we determine the possible singular fibres for each type (see Proposition \ref{condition_type_1} and Proposition \ref{fibres_type_2}), and prove that we can use their induced linear system to find a Weierstrass equation for the Jacobian elliptic fibration (see Propositions \ref{equation_type2} and \ref{equation_type1}).

In Section \ref{section_nishiyama}, we apply the Kneser--Nishiyama method (see \cite{Nishiyama1996}, \cite{Kneser}) to all K3 surfaces with Picard number at least 12 admitting a non-symplectic automorphism of order 3, with the condition that the automorphism acts trivially on its Néron--Severi group. Our restriction to such surfaces has a two-folded justification: the condition on the Picard number ensures that the Kneser--Nishiyama method provides a full $\cJ_2$-classification of the Jacobian elliptic fibrations, and the fact that the Trivial and Néron--Severi lattices of K3 surfaces with non-symplectic automorphisms of order 3 acting trivially on the Néron--Severi group are known thanks to the work of Artebani, Sarti and Taki (see \cite{AS},\cite{Taki2008}). We exhibit explicitly the choices of lattice and calculations of orthogonal complements necessary for this application. 

In Section \ref{section_X3_surface} we showcase our method described in Section \ref{K3_surfaces_ns_order3} by applying it to the $X_3$ surface, studied by Shioda and Inose in \cite{Shioda-Inose} and Vinberg in \cite{Vinberg}. The $\cJ_2$-classification of Jacobian elliptic fibrations of the $X_3$ surface was done in \cite{Nishiyama1996} by Nishiyama and exhibit Weierstrass equations for each class (see Theorem \ref{weierstrass_eq}, \ref{weierstrass_eq_2}).

\begin{acknowledge}
   The author thanks Cecília Salgado for suggesting the topic, which will be part of his thesis, and Alice Garbagnati for hosting him in Milan. The author also thanks both Cecília and Alice as well as Jaap Top for many fruitful discussions.
   The author was financially supported by a scholarship from Capes and the University of Groningen.
   
\end{acknowledge}

\section{Preliminaries}\label{section_preliminaries}

Let $k$ be an algebraically closed field of characteristic 0, which we fix once and for all.

\subsection{Elliptic fibrations}

\begin{dfn}
    A surjective map $\cE \colon S \to C$ from a smooth projective surface over $k$ to a smooth projective curve is called an \textit{elliptic fibration} if the generic fibre is a smooth curve of genus 1. We say that an elliptic fibration is \textit{relatively minimal} if no fibre contains a $(-1)$-curve, and \textit{Jacobian} if it admits a section $s_0 \colon C \to S$, which we call the zero-section. A triple $(S,\cE,s_0)$ is an \textit{elliptic surface} if $S$ is a surface admitting a relatively minimal Jacobian elliptic fibration $\cE \colon S \to C$ with at least one singular fibre and section $s_0 \colon C \to S$.
\end{dfn}

\begin{prop}\label{RES}
Let $\cF$ and $\cG$ be two cubics in $\PP^2$. Then, the resolution of the rational map $\varphi \colon \PP^2 \dashrightarrow \PP^1$ given by $P \mapsto [\cF(P) : \cG(P)]$ is a rational surface $R$ endowed with a Jacobian elliptic fibration $\cE \colon R \rightarrow \PP^1$.
\end{prop}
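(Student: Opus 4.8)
The plan is to resolve the indeterminacy of $\varphi$ by a sequence of blow-ups and track how the pencil of cubics $\lambda\cF + \mu\cG$ behaves under this process. First I would consider the base locus of the pencil, i.e. the scheme $Z = \{\cF = \cG = 0\} \subset \PP^2$. Since $\cF$ and $\cG$ are cubics with no common component (we may assume this; if they share a component the map $\varphi$ is not dominant onto $\PP^1$ in the intended sense, and one factors it out first), Bézout gives that $Z$ consists of nine points counted with multiplicity. Blowing up $\PP^2$ at these nine points (in the infinitely-near sense, repeating on the strict transforms until the pencil becomes base-point free) produces a smooth rational surface $R$ together with a morphism $\pi \colon R \to \PP^2$ and a genuine morphism $\cE \colon R \to \PP^1$ resolving $\varphi$, because after removing the base locus the rational map is everywhere defined.

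Next I would check that $\cE$ is an elliptic fibration: the generic fibre is the strict transform of a generic member $\lambda\cF + \mu\cG = 0$, which is a smooth plane cubic, hence a smooth genus-$1$ curve. For $\cE$ to be Jacobian I would exhibit a section: the exceptional divisor over the last-blown-up point of each base point gives a $(-1)$-curve meeting every fibre exactly once (since the nine base points each lie on every member of the pencil with the appropriate multiplicity, the final exceptional curve is a multisection of degree one), and any such curve serves as the zero-section $s_0$. If the base scheme is reduced (nine distinct points), each of the nine exceptional curves is a section; in the non-reduced case one still gets at least one section from the top of each infinitely-near tower. I would also remark that $R$ is rational because it is obtained from $\PP^2$ by blow-ups, and that $\cE$ has relatively minimal model obtainable by contracting any $(-1)$-curves contained in fibres, which does not affect the Jacobian elliptic fibration structure; alternatively one notes $K_R = \pi^* K_{\PP^2} + \sum E_i$ has $-K_R$ equal to the class of a fibre, the hallmark of a (relatively minimal) rational elliptic surface when the nine points are in sufficiently general position.

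The main obstacle is the bookkeeping when the base scheme $Z$ is non-reduced or the nine points fail to impose independent conditions: then the naive "blow up nine points" must be replaced by an iterated blow-up along infinitely-near base points, and one must argue that this process terminates with a base-point-free pencil and that the resulting surface is still smooth, rational, and carries a section. I would handle this by the standard argument that each blow-up strictly drops the intersection multiplicity of the pencil at the point in question, so after finitely many steps the pencil is base-point free; smoothness and rationality are automatic for blow-ups of smooth points on a smooth rational surface. One caveat worth stating explicitly is that if $\cF$ and $\cG$ have a common linear or conic factor, $\varphi$ has one-dimensional image or the "fibres" include that fixed component, and the statement should be read with the tacit hypothesis that the pencil has no fixed component; I would either add this hypothesis or note that after dividing out the common factor one reduces to a pencil of curves of lower degree to which a similar (or trivial) analysis applies.
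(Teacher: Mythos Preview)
Your argument is correct and is the standard proof of this well-known fact. Note, however, that the paper does not actually prove this proposition: it is stated without proof in the preliminaries, with a citation to Miranda (\cite{miranda}, Lemma IV.1.2) appearing only for the converse direction immediately afterwards. So there is no ``paper's own proof'' to compare against; you have supplied the expected justification where the paper simply invokes the result as known.

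One small remark: as written, the proposition (both in the paper and in your proof) tacitly assumes that the generic member of the pencil $\lambda\cF+\mu\cG$ is a smooth cubic, not merely that $\cF$ and $\cG$ have no common component. You correctly flag the common-component issue, but you might also note that if every member of the pencil were singular (e.g.\ a pencil of cuspidal cubics through a fixed flex), the generic fibre would fail to be smooth of genus~$1$; the paper's later uses of the proposition always take pencils where this is visibly satisfied, so the omission is harmless in context.
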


Over algebraically closed fields, every rational elliptic surface can be obtained by such a resolution (see \cite{miranda}, Lemma IV.1.2).

In what follows, let $(S, \cE, s_0)$ be an elliptic surface. There is a bijection between sections $s \colon C \to S$ of $\cE$ and $k(C)$-points of the generic fibre $E$ of $\cE$. Let $O$ be the $k(C)$-point induced by the zero section. Then $(E,O)$ is an elliptic curve over $k(C)$, and the set of sections of $\cE$ inherits the group structure of $E(k(C))$ (see \cite{silverman2}, Proposition 3.10).

\begin{dfn}
    The group of sections of $\cE$ is called the Mordell--Weil group of $\cE$ and denoted by $\MW (\cE)$.
\end{dfn}

The Néron--Severi group of $S$, denoted by $\NS S$ has a lattice structure with the bilinear pairing given by the intersection form (see \cite{schutt-shioda}, Theorem 6.4). The classes $F$ of a fibre and $O$ of the zero-section of $\cE$ in $\NS S$ determine a sublattice $\langle F, O\rangle \subseteq \NS S$.

\begin{dfn}
The lattice $W_{\cE} := \langle F, O \rangle^{\perp \NS S}$ is called the \textit{frame lattice} of $(S,\cE,s_0)$.
\end{dfn}

Let $F_v$ denote the fibre $\cE^{-1}(v)$ for every $v \in C$. Then, $F_v$ meets $O$ at a single point. Let $\Theta_{0,v}$ be the component of $F_v$ which intersects $O$, and let $m_v$ be the number of distinct components of $F_v$. We write $F_v$ in $\NS S$ as $F_v = \Theta_{0,v} + \sum_{i=1}^{m_v-1} \mu_{i,v} \Theta_{i,v}$, where $\Theta_{i,v}$ are the distinct component of $F_v$ and $\mu_{i,v}$ their multiplicities. The possible fibres on a minimal elliptic fibration were classified by Kodaira and Néron (see \cite{kodaira1963compact}, \cite{Neron1964}). Through this classification, we see that each type of reducible fibre is related to a lattice of $ADE$-type through the following.

\begin{dfn} \label{typelattice}
    The $ADE$\textit{-type} of a fibre $F_v$ is defined as the sublattice $$T_v = \bigoplus_{i=1}^{m_v-1} \Theta_{i,v}.$$ The sum of the $ADE$-types of all fibres of $\cE$ is called the $ADE$-type of $\cE$, and denoted by $T$.
\end{dfn}

 The $ADE$-type of a reducible fibre $F_v$ is in 1-1 correspondence with its Kodaira type, except when $T_v$ is equal to $A_1$ or $A_2$. For $T_v = A_1$, $F_v$ can be of type $I_2$ or $III$; for $T_v = A_2$, $F_v$ can be of type $I_3$ or $IV$. See Table IV.3.1 \cite{miranda} for a list of the $ADE$-types for reducible fibres of each possible Kodaira type in a Jacobian elliptic fibration, along with other properties.

\begin{prop}
    Let $(S,\cE, s_0)$ be an elliptic surface, and denote the Euler characteristic of $S$ by $\chi(S)$ (see \cite{beauville}). If $\chi(S) > 1$, then the following affirmations are true.
    \begin{itemize}
        \item[i)] The root type of the frame lattice determines the $ADE$-type of $\cE$, that is, $(W_\cE)_\mathrm{root} = T$.
        \item[ii)] The Mordell--Weil group of $\cE$ is isomorphic to $W_{\cE}/T$.
    \end{itemize}
\end{prop}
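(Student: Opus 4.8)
The main input is the Shioda--Tate theorem (see \cite{schutt-shioda}): restriction to the generic fibre induces an exact sequence
\[
0 \longrightarrow \Triv(\cE) \longrightarrow \NS S \longrightarrow \MW(\cE) \longrightarrow 0,
\]
where $\Triv(\cE) = \langle F, O\rangle \oplus \bigoplus_v T_v = \langle F, O\rangle \oplus T$ is the trivial lattice generated by $O$, a general fibre, and all non-identity fibre components. First I would record that $T \subseteq W_\cE$: each $\Theta_{i,v}$ with $i \geq 1$ is vertical, so $\Theta_{i,v}\cdot F = 0$, and $O$ meets $F_v$ only along $\Theta_{0,v}$, so $\Theta_{i,v}\cdot O = 0$; hence $\Theta_{i,v} \in \langle F, O\rangle^{\perp \NS S} = W_\cE$.

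For part (ii) the plan is to show that the composite $W_\cE \hookrightarrow \NS S \twoheadrightarrow \MW(\cE)$ is surjective with kernel $T$, which yields $\MW(\cE) \cong W_\cE/T$. For surjectivity, given a section of class $P$ one checks directly, using $O^2 = -\chi(S)$, that $P - O + \bigl(-\chi(S) - (P\cdot O)\bigr)F$ lies in $W_\cE$ and differs from $P$ by an element of $\Triv(\cE)$, hence has the same image in $\MW(\cE)$. The kernel is $\Triv(\cE)\cap W_\cE$; writing a general element as $\alpha F + \beta O + t$ with $t \in T$ and imposing orthogonality to $F$ then to $O$ forces $\beta = 0$ and then $\alpha = 0$, leaving exactly $T$.

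For part (i), the inclusion $T \subseteq (W_\cE)_{\mathrm{root}}$ is immediate since each $\Theta_{i,v}$ ($i \geq 1$) is a smooth rational curve, hence a $(-2)$-class, and these generate $T$. The content is the reverse inclusion: every $r \in W_\cE$ with $r^2 = -2$ lies in $T$. Since $\cE$ is Jacobian it has no multiple fibres, so the canonical bundle formula gives $K_S \equiv (\chi(S)-2)F$; because $r\cdot F = 0$ this gives $r\cdot K_S = 0$, and Riemann--Roch yields $\chi(\cO_S(r)) = \chi(S) + \tfrac12 r^2 = \chi(S) - 1 \geq 1$ --- this is precisely where $\chi(S) > 1$ is used. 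Hence $r$ or $K_S - r$ is effective. If $r$ is effective, then $F$ is nef and $r\cdot F = 0$ forces every component of $r$ to be vertical, while $r\cdot O = 0$ together with $\Theta_{0,v}\cdot O = 1$ forces the support of $r$ to contain no identity component, so $r \in \bigoplus_v T_v = T$. If instead $D := K_S - r$ is effective, the same nefness argument shows $D$ is vertical, hence a sum of fibre components; expanding identity components via $\Theta_{0,v} \equiv F - \sum_{i\geq 1}\mu_{i,v}\Theta_{i,v}$ puts $D$ in $\ZZ F + T \subseteq \Triv(\cE)$, and since $K_S \equiv (\chi(S)-2)F \in \Triv(\cE)$ we get $r = K_S - D \in \Triv(\cE)$; combined with $r \in W_\cE$ and the kernel computation from (ii), $r \in \Triv(\cE)\cap W_\cE = T$. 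Either way $r \in T$, so $(W_\cE)_{\mathrm{root}} \subseteq T$ and equality holds.

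\emph{Expected main obstacle.} The delicate point is the reverse inclusion in (i), and specifically the case in which $K_S - r$ rather than $r$ is effective, where one must unwind the identity-component relations and feed the result back into the Shioda--Tate kernel. The hypothesis $\chi(S) > 1$ is essential and not merely technical: for a rational elliptic surface ($\chi(S) = 1$) the Riemann--Roch bound degenerates and (i) genuinely fails --- e.g. for a rational elliptic surface with only $I_1$ fibres one has $T = 0$ while $(W_\cE)_{\mathrm{root}} = W_\cE$ has rank $8$.
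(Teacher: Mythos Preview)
Your argument is correct. The paper does not give a self-contained proof here: it simply cites \cite{schutt-shioda}, Proposition 6.42 for (i) and derives (ii) from (i) together with the Shioda--Tate theorem (\cite{schutt-shioda}, Theorem 6.5). Your write-up supplies the details behind those citations --- the Riemann--Roch/effectivity argument for (i) and the explicit identification of $\Triv(\cE)\cap W_\cE$ with $T$ for (ii) --- and is essentially the proof one finds in the cited reference, so the approaches coincide. One organisational difference: the paper deduces (ii) from (i), whereas you establish (ii) independently and then feed the kernel computation back into the second case of (i); either order works.
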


\begin{proof}
    See \cite{schutt-shioda}, Proposition 6.42 for item (i). Item (ii) is a consequence of (i) and the Shioda-Tate Theorem (see \cite{schutt-shioda}, Theorem 6.5). 
\end{proof}

\subsection{Elliptic fibrations in K3 surfaces}\label{elliptic_fibrations_k3}

In this section, we recall the definition of K3 surfaces, as well as some of their properties. More specifically, we present distinct ways of classifying elliptic fibrations on K3 surfaces.

\begin{dfn}
A \textit{K3 surface} is a smooth, projective surface $X$ such that $K_X = 0$ and $q(X) = h^1(X, \cO_X) = 0$.
\end{dfn}

If $X$ is a K3 surface, then the lattice given by $H^2(X,\ZZ)$ with the cup product is always isometric to $\Lambda_{K3} = U^{\oplus 3} \oplus E_8(-1)^{\oplus 2}$, where $U$ is the hyperbolic plane lattice (see \cite{huybrechts}, Chapter 1, Proposition 3.5). The Néron--Severi group $\NS X$ can be isometrically embedded as a lattice in $H^2(X,\ZZ)$. 

\begin{dfn}
    The transcendental lattice of $X$ is defined as $T_X := (\NS X)^{\perp H^2(X,\ZZ)}$.
\end{dfn}

On a K3 surface $X$, Jacobian elliptic fibrations are equivalent to embeddings $U \hookrightarrow \NS X$. Indeed, the fibre class $F$ and the zero-section $O$ of a Jacobian elliptic fibration $\cE$ determine a sublattice $\langle F, F + O\rangle \subseteq \NS X$ isometric to $U$. On the other hand, the following holds

\begin{prop}\label{K3_elliptic_fibration}
    Let $X$ be a K3 surface and $\psi \colon U \hookrightarrow \NS X$ an embedding of lattices. Then, there is an elliptic fibration $\cE \colon X \to \PP^1$ such that its frame lattice $W_{\cE}$ is isometric to $\psi(U)^{\perp \NS X}$.
\end{prop}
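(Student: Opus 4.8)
The plan is to realize $\psi(e_0)$ as the fibre class of an elliptic fibration, to extract a section out of $\psi(f_0)$, and then to compare orthogonal complements. Write $U=\langle e_0,f_0\rangle$ with $e_0^2=f_0^2=0$ and $e_0\cdot f_0=1$. Since $U$ is unimodular, any embedding of $U$ into a lattice is primitive, so $\psi$ is primitive and $\NS X=\psi(U)\oplus\psi(U)^{\perp \NS X}$ as an orthogonal direct sum; I use this, and the evenness of $\NS X$, freely. First I would normalize $\psi$: after replacing $\psi$ by $\psi\circ(-\id_U)$ if necessary we may assume $\psi(e_0)$ lies in the closure of the positive cone $\mathcal C_X\subseteq\NS X\otimes\RR$. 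The Weyl group $\W(\NS X)$ generated by reflections in $(-2)$-classes acts on $\overline{\mathcal C_X}$ with the nef cone as a fundamental domain, so there is $w\in\W(\NS X)$ with $w(\psi(e_0))$ nef; replacing $\psi$ by $w\circ\psi$ changes $\psi(U)^{\perp \NS X}$ only by the isometry $w$, hence is harmless. So we may assume that $F:=\psi(e_0)$ is nef. It is primitive (being primitive in $\psi(U)\cong U$, which is primitive in $\NS X$) and isotropic.

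Next, by Riemann--Roch on $X$ one has $\chi(\cO_X(F))=2+\tfrac12F^2=2$, and $-F$ is not effective since $F$ is nef and nonzero; hence $h^0(X,\cO_X(F))\ge 2$, so $F$ is the class of a genus-one pencil and, being primitive, $|F|$ is base point free and defines an elliptic fibration $\cE\colon X\to\PP^1$ with fibre class $F$ (this is the classical theorem of Pjateckii-Shapiro and Shafarevich; see, e.g., \cite{huybrechts}). To obtain a section I would set $O':=\psi(f_0)-F$, so that $(O')^2=-2$ and $O'\cdot F=1$. Riemann--Roch makes $O'$ or $-O'$ effective, and since $F$ is nef with $O'\cdot F=1>0$ it is $O'$ that is effective. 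As $F$ is nef, each irreducible component of $O'$ has nonnegative $F$-degree; since these degrees, weighted by multiplicities, sum to $1$, exactly one component $C$ occurs, with multiplicity one, and it satisfies $C\cdot F=1$, the others being vertical. Then $C\to\PP^1$ has degree one, so $C$ is a section; thus $\cE$ is Jacobian, and I take $O:=[C]$ as the zero-section, with $O^2=-2$ and $O\cdot F=1$ by adjunction.

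It remains to identify $W_\cE=\langle F,O\rangle^{\perp \NS X}$. The sublattice $A:=\langle F,O\rangle\subseteq\NS X$ has Gram determinant $-1$, hence is unimodular, hence primitive, giving an orthogonal splitting $\NS X=A\oplus W_\cE$. Write $O'=C+V$ with $V\ge 0$ vertical. Using that all fibres are linearly equivalent to $F$, the class of $V$ lies in $\ZZ F\oplus T$, where $T=\bigoplus_v T_v$ is the $ADE$-lattice of $\cE$; moreover $T\subseteq W_\cE$, since the fibre components not meeting $O$ are orthogonal to both $F$ and $O$. Hence $\psi(f_0)=O+mF+t$ for some $m\in\ZZ$ and $t\in W_\cE$. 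A direct computation in $\NS X=A\oplus W_\cE$ now shows that $b\mapsto b-(b\cdot t)F$ is an isometry from $W_\cE$ onto $\langle F,\psi(f_0)\rangle^{\perp \NS X}=\psi(U)^{\perp \NS X}$: it is injective and $\ZZ$-linear, preserves the form because $F$ is isotropic, maps into $\psi(U)^{\perp \NS X}$ because $b\cdot F=0$ and $F\cdot\psi(f_0)=1$, and is surjective by inspection. (Equivalently, the Eichler transvection $x\mapsto x-(x\cdot t)F+(x\cdot F)t-\tfrac12(t^2)(x\cdot F)F$, an integral isometry of $\NS X$ since $t^2$ is even, sends $A$ to $\langle F,\psi(f_0)\rangle$ and hence $W_\cE$ to $\psi(U)^{\perp \NS X}$.) This yields $W_\cE\cong\psi(U)^{\perp \NS X}$.

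The genuine input is the classical statement that a primitive nef isotropic class on a K3 surface is the fibre class of an elliptic fibration; the Riemann--Roch bookkeeping and the concluding lattice computation are routine. The one point that needs care is that $W_\cE$ matches $\psi(U)^{\perp \NS X}$ only up to isometry and not on the nose: the vertical summand $V$ can push $\psi(U)$ off $\langle F,O\rangle$ inside $\NS X$, but only within $T\subseteq W_\cE$, which is exactly why the two orthogonal complements remain isometric.
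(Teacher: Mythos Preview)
Your argument is correct: normalizing $\psi(e_0)$ to a nef primitive isotropic class via the Weyl group, invoking the classical result that such a class is the fibre class of an elliptic fibration, extracting a section from the effective $(-2)$-class $\psi(f_0)-F$, and then matching the two orthogonal complements by an explicit isometry (or equivalently an Eichler transvection) all go through as written. The one step that genuinely relies on outside input---that a primitive nef isotropic class on a K3 yields an elliptic pencil---you flag and cite appropriately.

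The paper itself gives no argument for this proposition; it simply cites \cite{Kondo2}, Lemma~2.1. So there is nothing to compare: you have supplied a self-contained proof where the paper only points to the literature. Your version has the virtue of making transparent exactly where the isometry between $W_{\cE}$ and $\psi(U)^{\perp \NS X}$ comes from, namely the discrepancy $t\in T\subseteq W_{\cE}$ between $\psi(f_0)$ and $O+mF$, which is often left implicit.
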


\begin{proof}
    See \cite{Kondo2}, Lemma 2.1. 
\end{proof}

Since $U$ is unimodular, we can write $\NS X = U \oplus W_{\cE}$. Consequently, $W_{\cE}$ is an even lattice with signature $(0,\rho(X)-2)$ and its discriminant lattice is isomorphic to that of $\NS X$.

For a K3 surface $X$, it is possible that there are multiple distinct embeddings of $U$ into $\NS X$. In this case, $X$ admits multiple Jacobian elliptic fibrations. As such, it is useful to define ways in which two Jacobian elliptic fibrations $\cE$ and $\cE'$ of $X$ are equivalent. The classifications presented here are studied in depth in \cite{BKW}.

\begin{dfn}
    Let $\cE, \cE'$ be Jacobian elliptic fibrations on $X$ with respective sections $s, s'$. Then
    \begin{itemize}
        \item[i)] $\cE$ and $\cE'$ are $\cJ_0$-equivalent if there is $g \in \Aut(\PP^1)$ such that $\cE' = g \circ \cE$ and $s' = s \circ g$.
        \item[ii)] $\cE$ and $\cE'$ are $\cJ_1$-equivalent if there is $g \in \Aut(\PP^1)$ and $\sigma \in \Aut(X)$ such that $\cE' = g \circ \cE \circ \sigma$ and $s' = \sigma \circ s \circ g$.
    \end{itemize}
    The sets of Jacobian elliptic fibrations of $X$ modulo $\cJ_0$ and $\cJ_1$-equivalence are denoted by as $\cJ_0(X)$ and $\cJ_1(X)$, respectively.
\end{dfn}

Notice that if $\cE$ and $\cE'$ are $\cJ_0$-equivalent, then they are $\cJ_1$-equivalent by taking $\id_X$ as the automorphism. The $\cJ_1$ classification is particularly important because of the following theorem.

\begin{thm}
    On any given K3 surface $X$, there are finitely many elliptic fibrations up to $\cJ_1$-equivalence. 
\end{thm}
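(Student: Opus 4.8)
The plan is to reformulate the statement lattice-theoretically and then to reduce it to the cone conjecture for K3 surfaces, which here is a theorem of Sterk.

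First I would make precise the dictionary implicit in the preceding discussion. A Jacobian elliptic fibration $\cE\colon X\to\PP^1$ with zero section $s_0$ has fibre class $F=\cE^{*}\cO_{\PP^1}(1)\in\NS X$, which is primitive (a K3 elliptic fibration has no multiple fibres), isotropic, and nef, and which together with $O$ spans a copy of $U\subseteq\NS X$; conversely, by Proposition~\ref{K3_elliptic_fibration} every primitive isotropic nef class spanning such a $U$ is the fibre class of a Jacobian elliptic fibration, unique up to $\Aut(\PP^1)$ and the choice of zero section. Any two zero sections of a fixed $\cE$ differ by a translation by a section, which is an automorphism of $X$ commuting with $\cE$; and any $\sigma\in\Aut X$ sends $\cE$ to a Jacobian elliptic fibration whose fibre class is $\sigma^{*}F$. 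Combining these facts, the rule that assigns to a Jacobian elliptic fibration the $\Aut X$-orbit of its fibre class descends to an injection of $\cJ_1(X)$ into the set of $\Aut X$-orbits of primitive isotropic nef classes in $\NS X$, so it suffices to prove the latter set is finite. (If $\rho(X)\le 1$ there are no isotropic classes and nothing to prove, so assume $\rho(X)\ge 2$.)

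Next I would invoke Sterk's theorem, that is, the Morrison--Kawamata cone conjecture for K3 surfaces: the action of $\Aut X$ on the rational convex cone $\mathrm{Nef}(X)^{+}$ (the convex hull of the rational points of $\overline{\mathrm{Amp}(X)}$) admits a rational polyhedral fundamental domain $\Pi$. Every primitive isotropic nef class lies in $\mathrm{Nef}(X)^{+}$, so up to the $\Aut X$-action we may assume it lies in $\Pi$. Since $\Pi$ sits inside the closed positive cone $\overline{\cC^{+}}$, whose boundary is the quadric $\{x : x^{2}=0\}$, and since a lattice of signature $(1,\rho-1)$ contains no totally isotropic subspace of dimension $\ge 2$, an elementary convexity argument shows that every isotropic vector of $\Pi$ lies on one of its finitely many extremal rays: were the minimal face of $\Pi$ through such a vector of dimension $\ge 2$, the intersection form on its linear span would be either negative semidefinite of rank $\ge 2$, which forces that face into the boundary quadric and hence into a totally isotropic subspace of dimension $\ge 2$, or nondegenerate of signature $(1,m)$ with $m\ge 1$, in which case a relative-interior isotropic point is impossible. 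Each extremal ray of $\Pi$ contains a unique primitive class of $\NS X$, so there are only finitely many primitive isotropic nef classes modulo $\Aut X$, and the theorem follows.

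The only genuinely deep ingredient is Sterk's theorem; it rests on the global Torelli theorem for K3 surfaces together with the reduction theory of indefinite lattices, and I would simply quote it (cf.\ \cite{BKW}). Everything else is bookkeeping: that a K3 elliptic fibration has no multiple fibres, so $F$ is primitive; that translations by sections absorb the choice of zero section, so $\cJ_1$-equivalence sees only the $\Aut X$-orbit of the fibre class; and the convex-geometry fact that a rational polyhedral subcone of a round cone meets the isotropic boundary of that cone only along finitely many rays. Thus the real content --- and the main obstacle --- lies entirely in the finiteness of the fundamental domain $\Pi$, not in the deduction of the theorem from it.
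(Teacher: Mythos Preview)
Your argument is correct and is exactly the content of the reference the paper cites (Sterk, Proposition~2.6 and Corollary~2.7); the paper's own proof is simply that citation. Your convexity step could be streamlined---the light-cone inequality $x\cdot y\ge 0$ for $x,y\in\overline{\cC^+}$, with equality only for proportional isotropic vectors, immediately forces an isotropic point of a polyhedral subcone onto an extremal ray without the signature case-split---but the conclusion stands.
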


\begin{proof}
    See \cite{Sterk}, Proposition 2.6 and Corollary 2.7.
\end{proof}

If $\cE$ and $\cE'$ are $\cJ_1$-equivalent, then the frame lattices $W_{\cE}$ and $W_{\cE'}$ are isomorphic. This motivates a third type of classification.

\begin{dfn}
    Let $\cE, \cE'$ be Jacobian elliptic fibrations of $X$. We say that they are $\cJ_2$-equivalent if $W_{\cE} \cong W_{\cE'}$. The set of elliptic fibrations of $X$ modulo $\cJ_2$-equivalence is denoted by $\cJ_2(X).$
\end{dfn}

This definition allows us to translate the classification problem to pure lattice theory.

\begin{dfn}
    Let $L$ be an even lattice. The \textit{discriminant group} of $L$ is defined as $ G_L := L^{\vee}/L$, where $L^{\vee}$ is the dual lattice of $L$. The \textit{discriminant form} of $L$ is a map $q_L \colon G_L \to \QQ/2\ZZ$, given by $x \mapsto \langle x, x\rangle \mod 2\ZZ$. The pair $(G_L,q_L)$ is called the \textit{discriminant lattice} of $L$.
\end{dfn}

Let $\cJ_2'(X)$ be the set of all even lattices, modulo isometries, with signature $(0,\rho(X) - 2)$ and discriminant lattice isomorphic to $(G_{\NS X}, q_{\NS X})$. Then, $\cJ_2(X)$ is a subset of $\cJ'_2(X)$.

\begin{prop}\label{picard_geq_12_works}
    Let $X$ be a K3 surface with $\rho(X) \geq 12$. Then, $\cJ'_2(X) = \cJ_2(X)$.
\end{prop}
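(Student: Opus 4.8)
The plan is to prove the inclusion $\cJ_2'(X) \subseteq \cJ_2(X)$, the reverse one having already been observed. So I start with an even lattice $L$ of signature $(0,\rho(X)-2)$ whose discriminant lattice is isomorphic to $(G_{\NS X}, q_{\NS X})$, and I aim to exhibit a Jacobian elliptic fibration $\cE$ on $X$ with $W_{\cE} \cong L$.

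The first step is to form $M := U \oplus L$ and compare it with $\NS X$. Since $U$ is unimodular, $M$ is even with discriminant lattice $(G_M, q_M) \cong (G_L, q_L) \cong (G_{\NS X}, q_{\NS X})$, and its signature is $(1, \rho(X)-1)$, which by the Hodge index theorem is precisely that of $\NS X$. Two even lattices with equal signature and isomorphic discriminant form lie in the same genus, so $M$ and $\NS X$ belong to the same genus. The second step is to show this genus contains a single class, via Nikulin's uniqueness criterion: an even lattice $N$ of signature $(t_+,t_-)$ with $t_+, t_- \geq 1$ and $t_+ + t_- \geq \ell(G_N) + 2$ is unique in its genus, where $\ell(G_N)$ is the minimal number of generators of $G_N$. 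For $N = M$ we have $t_+ + t_- = \rho(X) \geq 12$ and $\ell(G_M) = \ell(G_{\NS X})$. Because $\NS X$ and $T_X$ are mutually orthogonal primitive sublattices of the unimodular lattice $\Lambda_{K3}$, their discriminant groups are isomorphic, whence $\ell(G_{\NS X}) = \ell(G_{T_X}) \leq \rank T_X = 22 - \rho(X)$. Thus $\ell(G_M) + 2 \leq 24 - \rho(X) \leq \rho(X)$, the criterion applies, and $M \cong \NS X$.

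The last step turns this isometry into a fibration. Fixing an isometry $\phi \colon U \oplus L \to \NS X$, its restriction $\psi := \phi|_U \colon U \hookrightarrow \NS X$ is an embedding with $\psi(U)^{\perp \NS X} = \phi(L) \cong L$. By Proposition \ref{K3_elliptic_fibration}, this embedding yields an elliptic fibration $\cE \colon X \to \PP^1$ with $W_{\cE} \cong \psi(U)^{\perp \NS X} \cong L$; since embeddings of $U$ into $\NS X$ correspond to Jacobian elliptic fibrations, $\cE$ is Jacobian, so $L \in \cJ_2(X)$, which completes the argument.

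The one point requiring genuine care is the application of Nikulin's theorem: concretely, the estimate $\ell(G_{\NS X}) \leq 22 - \rho(X)$ via the orthogonality of $\NS X$ and $T_X$ in $\Lambda_{K3}$, together with the numerical inequality $24 - \rho(X) \leq \rho(X)$ which is exactly what the hypothesis $\rho(X) \geq 12$ provides and which shows the bound is sharp. Everything else is a direct concatenation of the lattice-theoretic facts and of Proposition \ref{K3_elliptic_fibration} recalled above.
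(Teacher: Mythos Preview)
Your argument is correct and is the standard proof via Nikulin's uniqueness criterion combined with Proposition~\ref{K3_elliptic_fibration}. The paper itself does not supply a proof but simply cites \cite{SSES}, Lemma 12.21; your write-up is essentially the argument one finds behind that reference.
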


\begin{proof}
    See \cite{SSES}, Lemma 12.21.
\end{proof}

\subsection{The Kneser--Nishiyama method}\label{nishiyama-method}

Based on lattice theoretic techniques developed by Kneser in \cite{Kneser}, Nishiyama developed a method of obtaining $\cJ'_2(X)$ for K3 surfaces $X$ with known $\NS X$ and $T_X$ (see \cite{Nishiyama1996}). This is known in the literature as the Kneser--Nishiyama method. In what follows we present a brief overview of it. See \cite{bertin}, Section 4.1, or \cite{BKW}, Section 4.1, for a similar overview.

\begin{dfn}
    A Niemeier Lattice is an even, unimodular, negative definite lattice of rank $24$.
\end{dfn}

\begin{thm}
Niemeier lattices are uniquely defined by their root types up to isometry, of which there only 24 possibilities.
\end{thm}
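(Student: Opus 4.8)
The plan is to establish both the enumeration (24 possibilities) and the uniqueness (determined by root type) by combining the theory of even unimodular lattices with the structure theory of root systems in negative definite lattices. First I would recall the general classification framework: by Kneser's neighbour method, or equivalently by the mass formula of Smith--Minkowski--Siegel specialized to rank $24$, the genus of even unimodular negative definite lattices of rank $24$ is nonempty and one can enumerate its classes. The Leech lattice is the unique such lattice with no roots (vectors of square $-2$), so I would treat it separately and focus the main argument on the $23$ lattices that do contain roots.

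The key step is the following rigidity statement: if $N$ is an even unimodular negative definite lattice of rank $24$ and $R \subset N$ is its root sublattice (the span of all norm $-2$ vectors), then $R$ has full rank $24$, and $N$ is recovered from $R$ as a specific overlattice determined by a ``glue code'' — an isotropic subgroup of the discriminant group $G_R = R^\vee/R$ with respect to the discriminant form $q_R$. I would show that unimodularity of $N$ forces this glue code to be a maximal isotropic subgroup, in fact one for which $N/R \to G_R$ is injective with image of order $\sqrt{|G_R|}$. The crucial constraint making the list finite and the classification clean is that $R$, being the root lattice of a negative definite lattice generated by its roots, must be a direct sum of irreducible $ADE$ root lattices $A_n, D_n, E_n$, and the requirement that a maximal isotropic glue subgroup exists pins down the total rank to $24$ and severely restricts which combinations of $ADE$ summands can occur. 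One then checks, component by component, that the root type determines the glue code up to the action of automorphisms permuting isometric components, hence determines $N$ up to isometry — this is the uniqueness half.

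The enumeration half then reduces to a finite combinatorial search: list all ways of writing $24$ as a sum of ranks of $ADE$ components such that (a) all components have the same Coxeter number $h$ (this is the subtle necessary condition, forced by the existence of a Weyl-vector/glue compatibility — equivalently by the fact that in a Niemeier lattice every root is part of a system whose components share the Coxeter number), and (b) a maximal isotropic subgroup of $(G_R, q_R)$ exists. Running through the possible Coxeter numbers $h = 2, 3, 4, 5, 6, 7, 8, 9, 10, 12, 13, 14, 16, 18, 22, 25, 30, 46$ and the corresponding component types yields exactly $23$ admissible root types, and together with the Leech lattice this gives $24$.

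The main obstacle I expect is justifying the ``equal Coxeter number'' condition and, relatedly, proving that a valid glue code actually exists and is unique for each of the $23$ root types rather than merely being combinatorially possible — this is where one genuinely needs either Venkov's characterization (using the fact that the sum of the norm-$2$ vectors, suitably interpreted, gives a relation forcing $h$ to be constant, via averaging over the Weyl group and the theory of the Weyl vector $\rho$ satisfying $\langle \rho, \alpha_i\rangle = 1$) or a direct case analysis. Since the excerpt permits citing established results, I would invoke the standard references (Niemeier's original paper, Venkov's proof via theta series, or Conway--Sloane, \emph{Sphere Packings, Lattices and Groups}, Chapter 16 and 18) for the verification that each of the $23$ combinatorial candidates is realized by exactly one lattice, and present the Coxeter-number constraint as the conceptual heart of why the list is finite.
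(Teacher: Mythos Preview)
Your plan is a sound outline of the standard modern proof (essentially Venkov's argument combined with the glue theory as in Conway--Sloane), and the sketch is correct at the level of a proof plan. However, the paper does not prove this statement at all: its entire ``proof'' is a one-line citation to Niemeier's original article (Theorem 8.5 there). So the two approaches differ maximally in scope --- the paper treats the classification as a black box imported from the literature, whereas you propose to reconstruct the argument. What your approach buys is self-containment and conceptual insight into \emph{why} there are exactly 24 and why the root type determines the lattice; what the paper's approach buys is brevity, which is appropriate since the Niemeier classification is used only as input to the Kneser--Nishiyama method and its internal proof is irrelevant to the paper's goals. If you intend to match the paper, a citation suffices; if you want to actually carry out your plan, be aware that the existence-and-uniqueness of the glue code for each of the 23 root types is a genuine case check that you have deferred to references, so your write-up would still ultimately rest on the same literature.
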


\begin{proof}
    See \cite{niemeier}, Theorem 8.5.
\end{proof}

Let $X$ be a K3 surface with transcendental lattice $T_X$.

\begin{thm}\label{nishiyama_thm}
    Let $T_0$ be a lattice of root type such that $\rank T_0 = \rank T_X + 4$, $G_{T_0} = G_{T_X}$ and $q_{T_0} = q_{T_X}$. Then, every $W \in \cJ'_2(X)$ can be written as $\phi(T_0)^{\perp L}$, for $\phi \colon T_0 \to L$ a primitive embedding into a Niemeier lattice $L$. Furthermore, if $\rho(X) \geq 12$, then there is a Jacobian elliptic fibration $\cE$ such that $W = W_{\cE}$, and the following holds.
    \begin{itemize}
        \item[i)] Let $M = \phi(T_0)^{\perp L_\mathrm{root}}$. Then the $ADE$-type of $\cE$ is $M_\mathrm{root} = W_\mathrm{root}$.
        \item[ii)] The rank of $\MW(\cE)$ is given by $\rank M - \rank M_\mathrm{root}$.
        \item[iii)] The torsion part of $\MW(\cE)$ is isomorphic to $\overline{W_\mathrm{root}}/W_\mathrm{root}$, where $\overline{W_\mathrm{root}}$ is the primitive closure of $W_\mathrm{root}$.
    \end{itemize}
\end{thm}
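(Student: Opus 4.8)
The plan is to combine the classical embedding theory of even lattices with the special features of the $\cJ_2$-classification established in Proposition \ref{picard_geq_12_works}. First I would verify that a lattice $T_0$ with the stated properties exists: since $T_X$ has signature $(2, 20 - \rho(X))$, a candidate $T_0$ should be negative definite of rank $\rank T_X + 4$ with the same discriminant form, and such a root lattice exists by the work of Nishiyama (this is the input one takes from \cite{Nishiyama1996}, using that $\rho(X)\ge 12$ forces $\rank T_X \le 10$, so $\rank T_0 \le 14$, small enough that a negative definite root lattice realizing $(G_{T_X}, q_{T_X})$ can be found). The key point for the main argument is that $T_0$ and $W_\cE$ have, up to sign, the same discriminant form: by the discussion after Proposition \ref{K3_elliptic_fibration}, $q_{W_\cE} = q_{\NS X} = -q_{T_X} = -q_{T_0}$, and both are negative definite.

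Next I would show that every $W \in \cJ_2'(X)$ embeds primitively into some Niemeier lattice with orthogonal complement $\cong T_0$. Given $W$, form the lattice $N := W \oplus T_0$; it is negative definite of rank $\rank W + \rank T_0 = (\rho(X) - 2) + (\rank T_X + 4) = 22 - \rho(X) + \rho(X) = 24$ — wait, more carefully: $\rank W = \rho(X) - 2$ and $\rank T_X = 22 - \rho(X)$, so $\rank T_0 = 26 - \rho(X)$ and the sum is $24$. Since $q_W = -q_{T_0}$, the discriminant forms cancel and the overlattice $N' \supseteq W \oplus T_0$ corresponding to the graph of an anti-isometry $G_W \xrightarrow{\sim} G_{T_0}$ is even, unimodular, negative definite of rank $24$, hence a Niemeier lattice $L$. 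The inclusions $W \hookrightarrow L$ and $T_0 \hookrightarrow L$ are primitive by construction (the graph overlattice construction), and $W = \phi(T_0)^{\perp L}$. Conversely, for any primitive embedding $\phi \colon T_0 \hookrightarrow L$ into a Niemeier lattice, the orthogonal complement $\phi(T_0)^{\perp L}$ is even, negative definite of rank $24 - \rank T_0 = \rho(X) - 2$, and has discriminant form $-q_{T_0} = q_{\NS X}$; hence it lies in $\cJ_2'(X)$. By Proposition \ref{picard_geq_12_works}, $\cJ_2'(X) = \cJ_2(X)$, so every such $W$ is $W_\cE$ for an actual Jacobian elliptic fibration $\cE$ on $X$; this gives the existence statement.

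For the three itemized consequences, I would invoke the structure results recalled earlier. Writing $M = \phi(T_0)^{\perp L_\mathrm{root}}$, one has $M \subseteq W = \phi(T_0)^{\perp L}$, and in fact $M_\mathrm{root} = W_\mathrm{root}$ because every root of $W$ lies in $L_\mathrm{root}$ (roots have square $-2$ and $L_\mathrm{root}$ contains all of them by definition of the root sublattice), so it is orthogonal to $\phi(T_0)$ inside $L_\mathrm{root}$; combined with the Proposition identifying $(W_\cE)_\mathrm{root} = T$, this gives (i). For (ii), $\MW(\cE) \cong W_\cE / T$ has rank $\rank W_\cE - \rank T = \rank M - \rank M_\mathrm{root}$, using $\rank W_\cE = \rank M$ (both equal $\rho(X) - 2$, as $\phi(T_0)$ has the same rank whether taken inside $L$ or inside $L_\mathrm{root}$ plus the finitely many glue vectors — more precisely $M$ has finite index in $W$). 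For (iii), the torsion subgroup of $\MW(\cE) = W_\cE / T$ equals the torsion of $W_\cE / W_\mathrm{root}$, which is $\overline{W_\mathrm{root}} / W_\mathrm{root}$ by definition of primitive closure, since $W_\cE / \overline{W_\mathrm{root}}$ is free.

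The main obstacle I anticipate is the existence and (implicit) control of $T_0$: showing that a negative definite \emph{root} lattice with prescribed discriminant form and the correct rank always exists is not formal — it relies on Nishiyama's case analysis and on genus-theoretic input for indefinite-to-definite transfer — and the cleanest route is simply to cite \cite{Nishiyama1996} and \cite{Kneser} for this, rather than reprove it. A secondary subtlety is checking primitivity of the embeddings in the graph construction and verifying that the rank bookkeeping ($\rank T_0 = \rank T_X + 4$ being exactly what makes $N$ have rank $24$) is consistent with $\rho(X) \ge 12$; these are routine but must be stated carefully.
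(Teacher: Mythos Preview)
The paper does not give its own proof of this theorem; it simply cites \cite{Nishiyama1996}, Sections 6.1 and 6.2. Your sketch correctly reconstructs Nishiyama's argument: the Nikulin gluing of $W$ and $T_0$ along an anti-isometry of discriminant forms yields an even unimodular negative definite lattice of rank $24$, hence a Niemeier lattice $L$, in which $W$ and $T_0$ sit as mutual primitive orthogonal complements; the converse and items (i)--(iii) then follow from the structure results you cite.

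One small point to tighten in your justification of (ii): the equality $\rank M = \rank W = \rho(X)-2$ is not really about ``glue vectors'' but about the fact that $T_0$, being a nonzero root lattice, contains roots and therefore cannot embed into the Leech lattice; hence every Niemeier lattice $L$ that receives $\phi$ has $\rank L_\mathrm{root} = 24$, and so $\phi(T_0)^{\perp L_\mathrm{root}}$ and $\phi(T_0)^{\perp L}$ have the same rank. Relatedly, you are implicitly using that $\phi(T_0)\subseteq L_\mathrm{root}$, which holds automatically since $T_0$ is generated by roots.
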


\begin{proof}
    See \cite{Nishiyama1996}, Sections 6.1 and 6.2.
\end{proof}

The Kneser--Nishiyama method consists of the application of the previous result to obtain $\cJ'_2(X)$. We describe it in te following steps.

\begin{itemize}
    \item[1.] Find a suitable lattice $T_0$ of root type.
    \item[2.] For each Niemeier lattice $L$, determine every possible primitive embedding ${\phi \colon T_0 \to L_\mathrm{root}}$, up to actions of $\W(L)$.
    \item[3.] For each embedding $\phi$, compute the orthogonal lattices $M := \phi(T_0)^{\perp L_\mathrm{root}}$ and the root type $M_\mathrm{root} = W_\mathrm{root}$.
    \item[4.] Compute the rank of the Mordell--Weil group, given by $\rank M - \rank M_\mathrm{root}$, and the torsion, given by $\overline{W_\mathrm{root}}/W_\mathrm{root}$.
\end{itemize} 

\subsection{Non-symplectic automorphisms of K3 surfaces}

Let $X$ be a K3 surface. Since $K_X = 0$, we have $h^0(X,K_X) = h^0(X,\cO_X) = 1$, and we can choose a generator $\omega_X$ of $H^0(X,K_X)$. If $\sigma: X \rightarrow X$ is an automorphism of $X$, $\sigma$ induces an action on $H^0(X,K_X)$ with the map $\omega_X \mapsto \sigma^*(\omega_X)$.

\begin{dfn}
    We say that an automorphism $\sigma$ is \textit{symplectic} if $\sigma^*(\omega_X) = \omega_X$ and \textit{non-symplectic} if $\sigma^*(\omega_X) = \zeta\omega_X$ for some root of unity $\zeta \neq 1$.
\end{dfn}

Let $X$ be a K3 surface endowed with a non-symplectic automorphism $\sigma$ of prime order $p$. By \cite{nikulin1979}, Theorem 0.1, $p$ is at most equal to 19. Denote the fixed locus of $\sigma$ by $\Fix(\sigma)$.

\begin{thm}\label{fixed_locus}
    For $X$ as above, $\Fix(\sigma)$ is either empty or the disjoint union of isolated points and smooth curves. Moreover, on the nonempty case, either $p =2$ and $\Fix(\sigma)$ is the disjoint union of two curves of genus 1, or
    \begin{equation*}
        \Fix(\sigma) = C \cup C_1 \cup {.}{.}{.} \cup C_{k-1} \cup \{P_1, {.}{.}{.}, P_n\}
    \end{equation*}
    where $C$ is a curve of genus $g$, $C_i$ are rational curves and $P_i$ isolated fixed points.
\end{thm}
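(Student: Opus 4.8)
The plan is to reduce this to Nikulin's classification of finite-order actions on $K3$ lattices together with local analysis at the fixed locus, which is the standard route for such statements. First I would establish that $\Fix(\sigma)$ is smooth. Since $\sigma$ has finite order $p$ and we are in characteristic $0$, one can linearize $\sigma$ near any fixed point: the action on the tangent space $T_xX \cong k^2$ is given by a matrix of order dividing $p$, hence diagonalizable, so locally $\sigma$ looks like $(x,y)\mapsto(\zeta^a x,\zeta^b y)$ for a primitive $p$-th root of unity $\zeta$. Reading off the possible eigenvalue pairs $(a,b)$ shows $\Fix(\sigma)$ is locally either a point (when both eigenvalues are $\neq 1$), a smooth curve (when exactly one eigenvalue is $1$), or all of a neighbourhood (when both are $1$) — the last case being excluded because $\sigma\neq\id$ and $X$ is connected. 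This already gives that $\Fix(\sigma)$ is a disjoint union of isolated points and smooth curves.

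Next I would pin down which eigenvalue pairs actually occur, using that $\sigma$ is non-symplectic. The action on $\Omega^2_{X,x} = \det T_x^\vee$ is multiplication by $\zeta^{-a-b}$, and since $\sigma^*\omega_X = \zeta\omega_X$ with $\zeta\neq 1$ this forces $a+b\not\equiv 0\pmod p$. For $p=2$ the only possibility with $a+b$ odd is $(a,b)=(1,1)$ has $a+b$ even — wait, so for $p=2$ one needs $a+b\equiv 1$, impossible for isolated points $(1,1)$; hence when $p=2$ \emph{every} fixed component is a curve, and a local computation on the normal bundle combined with the holomorphic Lefschetz fixed point formula (or topological Lefschetz, $\chi(\Fix\sigma) = \sum(-1)^i\tr(\sigma^*|H^i)$) forces $\Fix(\sigma)$ to consist of curves of Euler characteristic summing appropriately; together with the fact that for $p=2$ the quotient $X/\sigma$ is again smooth and the standard double-cover analysis, one gets the two genus-$1$ curves case. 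For odd $p$, both isolated points and curves can occur, but a curve $C_i$ with $i\geq 1$ and a distinguished curve $C$ of genus $g$; the point is that only \emph{one} fixed curve can have genus $\geq 2$. For $p=3$ this is proved by Artebani--Sarti (cited in the paper); in general it follows by bounding $\sum_i g(C_i)$ via the Lefschetz number and the action of $\sigma^*$ on $H^{1,1}$ and $H^{2,0}\oplus H^{0,2}$ — concretely $\tr(\sigma^*|H^2) = 2\cos(2\pi/p)\cdot(\text{something}) + \ldots$, and comparing with $\chi(\Fix\sigma) = 2 + 2n + \sum_i(2-2g(C_i))$ constrains the genera enough that at most one $g(C_i)\geq 2$.

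The step I expect to be the main obstacle is the last one: showing that in the nonempty odd-$p$ case there is (at most) a single fixed curve of higher genus, all others being rational. This is genuinely where the global input is needed — the purely local linearization says nothing about how many high-genus curves can appear, and one must feed in the signature/trace constraints from Nikulin's equivariant lattice theory. I would handle it by writing the Lefschetz fixed-point formula in the form
\begin{equation*}
2 + 2n + \sum_{i}\bigl(2 - 2g(C_i)\bigr) + (2-2g) = \sum_{j=0}^{4}(-1)^j\tr\bigl(\sigma^*\,|\,H^j(X,\CC)\bigr),
\end{equation*}
using that $\sigma^*$ acts trivially on $H^0,H^4$ and that on $H^2$ its eigenvalues are $1$ (with multiplicity $\rank$ of the $\sigma$-invariant part) and primitive $p$-th roots of unity in complementary pairs; combined with the holomorphic Lefschetz formula applied to $\cO_X$ and to $\Omega^1_X$, which controls $\sum g(C_i)$ separately, this boxes in the genera. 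For $p=2$ and $p=3$ I would simply cite Nikulin and Artebani--Sarti--Taki as the paper does, and for the general prime statement invoke the corresponding classification results; the proof here is really an assembly of these ingredients rather than a new argument.
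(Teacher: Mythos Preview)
The paper itself does not prove this theorem: its entire proof is the citation ``See \cite{AST}, Lemma 2.2.'' So you are attempting considerably more than the paper does, and your local analysis is correct and is indeed how one begins: linearize $\sigma$ at a fixed point, use non-symplecticity to force $a+b\not\equiv 0\pmod p$, and read off that the fixed locus is a disjoint union of smooth points and smooth curves, with no isolated points when $p=2$.

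Where your proposal has a genuine gap is the step you yourself flag as the main obstacle: showing that at most one fixed curve has positive genus. You propose to extract this from the topological and holomorphic Lefschetz formulas, but those only give \emph{aggregate} numerical relations (essentially $n$ and $\sum_i(2-2g(C_i))$ in terms of lattice invariants), not constraints on individual genera. No linear relation among the $g(C_i)$ will prevent two of them from being large simultaneously. (Incidentally, your displayed Lefschetz identity is off: isolated points contribute $1$, not $2$, to $\chi(\Fix\sigma)$, and the stray ``$2+$'' on the left belongs on the cohomological side.)

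The missing ingredient is the Hodge index theorem. On a K3, a smooth curve $C$ of genus $g$ has $C^2=2g-2$ by adjunction. Two disjoint fixed curves $C,C'$ with $g(C),g(C')\geq 1$ would give $C^2,C'^2\geq 0$ and $C\cdot C'=0$, hence a sublattice of $\NS X$ that is either positive definite of rank $2$ (if both genera $\geq 2$) or totally isotropic of rank $2$ (if both genera $=1$). The first contradicts the signature $(1,\rho-1)$ of $\NS X$; the second forces $[C]=[C']$, so both are fibres of a common elliptic pencil. A short argument on the induced action on that pencil (the normal eigenvalue along a pointwise-fixed fibre forces $\zeta=\zeta^{-1}$) then shows this last case can only occur for $p=2$. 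This is how the cited result is actually proved; the Lefschetz formulas enter only afterwards, to relate $(n,k,g)$ to the lattice invariants, not to bound the number of high-genus curves.
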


\begin{proof}
    See \cite{AST}, Lemma 2.2.
\end{proof}

\begin{thm}\label{quotientK3}
    Let $X$ be a K3 surface and $\sigma \in \Aut X$ of finite order $n > 1$. Let $Y$ be the resolution of the quotient $X/\sigma$.
    \begin{itemize}
        \item[i)] If $\sigma$ is symplectic, then $X/\sigma$ is a K3 surface.
        \item[ii)] If $\sigma$ is non-symplectic with $n = 2$ and $\Fix(\sigma) = \varnothing$, then $Y$ is an Enriques surface.
        \item[iii)] If $\sigma$ is non-symplectic and either $n > 2$ or $\Fix(\sigma) \neq \varnothing$, then $Y$ is rational. 
    \end{itemize}
\end{thm}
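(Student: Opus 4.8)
The plan is to study the quotient morphism $\pi\colon X\to X/\sigma$ and the resolution $f\colon Y\to X/\sigma$, and to read off the birational type of $Y$ from $q(Y)$, $p_g(Y)$ and the plurigenera $P_m(Y)$. The starting observation is local: at a fixed point of $\sigma$ (or of a power of $\sigma$) the induced action on the tangent plane is diagonalisable of finite order, so $X/\sigma$ has at worst cyclic quotient singularities, which are rational; hence $f_*\cO_Y=\cO_{X/\sigma}$ and $R^if_*\cO_Y=0$ for $i>0$, so $h^i(\cO_Y)=h^i(\cO_{X/\sigma})$. Since $\ord(\sigma)$ is invertible we have $H^i(X/\sigma,\cO_{X/\sigma})=H^i(X,\cO_X)^{\sigma}$, and as $H^1(X,\cO_X)=0$ this already gives $q(Y)=0$ in all three cases. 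By Serre duality $\sigma$ acts on $H^2(X,\cO_X)\cong H^0(X,K_X)^{\vee}$ by $\zeta^{-1}$, so when $\sigma$ is non-symplectic we also get $p_g(Y)=h^2(\cO_X)^{\sigma}=0$.

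For (i), a symplectic $\sigma$ has linearisation in $\mathrm{SL}_2$ at each fixed point, and a finite-order element of $\mathrm{SL}_2$ with $1$ as an eigenvalue is the identity; so $\Fix(\sigma)$ is finite, $X/\sigma$ has only rational double points, its minimal resolution is crepant, and since the invariant form $\omega_X$ descends to a trivialisation of $K_{X/\sigma}$ off the codimension-two singular locus we get $K_Y=f^{*}K_{X/\sigma}=0$; a smooth projective surface with $K_Y=0$ and $q(Y)=0$ is a K3 surface (so $X/\sigma$ is a K3 surface with rational double points). For (ii), a fixed-point-free involution makes $\pi$ a connected étale double cover, so $Y=X/\sigma$ is smooth and $\pi^{*}K_Y=K_X=\cO_X$; as the kernel of $\pi^{*}$ on $\Pic$ has order two, $2K_Y=0$, and $K_Y\ne0$ because otherwise $Y$ would be K3 and the topological Euler characteristics would give $24=e(X)=2e(Y)=48$. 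Thus $K_Y$ is nontrivial $2$-torsion with $q(Y)=0$, which characterises Enriques surfaces.

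Case (iii) is the main point. We have $q(Y)=p_g(Y)=0$, so by Castelnuovo's rationality criterion it suffices to prove $\kappa(Y)=-\infty$; since $\{m\ge1:P_m(Y)\ne0\}$ is a sub-semigroup of $(\ZZ_{>0},+)$, it is enough to kill $P_m(Y)$ for all $m$ in some progression $d\ZZ_{>0}$. For $m$ with $mK_{X/\sigma}$ Cartier, the projection formula together with $f_{*}\cO_Y(E)=\cO_{X/\sigma}$ for every effective $f$-exceptional $E$ gives $H^0(Y,mK_Y)\hookrightarrow H^0(X/\sigma,mK_{X/\sigma})$. Note that $\Fix(\sigma)\ne\varnothing$ always holds here, since a free action with $n>2$ would force $\chi(\cO_X)=n\,\chi(\cO_Y)$ with $n\nmid2$. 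If the divisorial part $B$ of the branch locus of $\pi$ is nonzero — automatic for $n=2$ by Theorem \ref{fixed_locus}, and possible for $n\ge3$ — then the ramification formula over the smooth locus $U=(X/\sigma)_{\mathrm{sm}}$ combined with $K_X=0$ yields $n^{2}K_U=-n(n-1)B$ in $\Pic(U)$, so $H^0(U,kn^{2}K_U)=H^0(U,-kn(n-1)B)=0$ for all $k\ge1$ and hence $P_{kn^{2}}(Y)=0$. If instead $\Fix(\sigma)$ is a finite set of points (which forces $n\ge3$), then $X/\sigma$ is smooth away from finitely many cyclic quotient singularities of type $\tfrac1n(a,b)$ with $a+b\not\equiv0\pmod n$ — the congruence failing precisely because $\det$ of the linearisation acts on $\omega_X$ by $\zeta\ne1$ — and such singularities are not canonical, so for $n\mid m$ the generator $\omega_X^{\otimes m}$ of the one-dimensional space $H^0(X/\sigma,mK_{X/\sigma})$ acquires a pole along an exceptional curve of $f$ and $P_m(Y)=0$, while for $n\nmid m$ one has $P_m(Y)=0$ because $H^0(X\setminus\Fix(\sigma),\cO_X)=\CC$ by Hartogs and $\zeta^m\ne1$. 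In either subcase $\kappa(Y)=-\infty$, so $Y$ is rational.

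The step I expect to be the main obstacle is this last plurigenus analysis — keeping precise track of the contribution of the resolution of the cyclic quotient singularities. This is exactly where the non-symplectic hypothesis enters in an essential way (through the determinant of the local action, which makes those singularities non-canonical), and where one must be careful to pass correctly between $mK_{X/\sigma}$ as a Weil divisor class and the line bundle $mK_Y$.
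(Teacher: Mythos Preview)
The paper does not prove this theorem; it only cites \cite{Kondo} for (i) and \cite{zhang_quotients}, \cite{Xiao} for (ii) and (iii). Your proposal therefore supplies far more than the paper does. The overall strategy---reading $q(Y)$ and $p_g(Y)$ off the $\sigma$-invariants of $H^i(X,\cO_X)$, treating (i) via crepant resolution of du Val singularities, (ii) via the \'etale double cover and an Euler-characteristic count, and (iii) via Castelnuovo's criterion combined with a plurigenus computation exploiting the ramification formula and the non-canonical nature of the quotient singularities---is correct and is essentially how the cited sources argue.

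One small gap in your treatment of (iii): in the isolated-fixed-point subcase you invoke ``$\zeta^m\ne1$ for $n\nmid m$'', which tacitly assumes $\zeta$ is a primitive $n$-th root of unity. For composite $n$ this can fail, since a proper power $\sigma^d$ may be symplectic. The standard remedy is either to replace $n$ by $d=\ord(\zeta)$ in that dichotomy, or to first pass to the K3 surface $X/\langle\sigma^d\rangle$ via part (i) and thereby reduce to the purely non-symplectic situation $\ord(\zeta)=n$. Since the paper only applies this theorem with $n=3$, your argument is already complete for its purposes.
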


\begin{proof}
    See \cite{Kondo}, Lemma 4.1 for item (i), \cite{zhang_quotients}, Lemma 1.2, and \cite{Xiao}, Lemma 2, for items (ii) and (iii).
\end{proof}

\subsection{Classification of fibrations in relation to a non-symplectic automorphism} \label{section-classification-prime}

In \cite{GS1}, Garbagnati and Salgado define a classification of elliptic fibrations on a K3 surface in relation to a non-symplectic involution $\iota$. This definition generalizes nicely to any non-symplectic automorphism $\sigma$ of prime order. In this section, we reproduce this definition and show some of its main properties.

\begin{dfn}\label{classification}
    Let $(X,\sigma)$ denote a pair consisting of $X$ a K3 surface, and $\sigma$ a fixed non-symplectic automorphism of $X$ of  prime order $p$. If $\cE \colon X \to \PP^1$ is a Jacobian elliptic fibration, we classify it in relation to $\sigma$ as follows.
    \begin{itemize}
        \item[1)] $\cE$ is of \textit{type 1} if $\sigma(F_v) = F_v$ for every fibre $F_v = \cE^{-1}(v)$.
        \item[2)] $\cE$ is of \textit{type 2} if $\sigma$ preserves the class of the fibre $[F]$ in $\NS X$, but $\sigma$ is not of type 1 (i.e. there is $v \in \PP^1$ such that $\sigma(F_v) = F_{v'}$, for $v \neq v'$).
        \item[3)] $\cE$ is of \textit{type 3} if $\sigma$ does not preserve the class of the fibre $[F]$ in $\NS X$.   
    \end{itemize}
\end{dfn}

\begin{prop}
    Any Jacobian elliptic fibration in $X$ is of type 1, 2 or 3 in relation to $\sigma$.
\end{prop}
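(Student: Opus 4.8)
The plan is to argue that the three types form an exhaustive (and, in fact, mutually exclusive) trichotomy, obtained from a two-step case distinction on the behaviour of $\sigma$ with respect to the fibre class $[F]\in\NS X$. First I would split on whether $\sigma^*[F]=[F]$ or not. If $\sigma^*[F]\neq[F]$, the fibration is of type 3 by definition and there is nothing more to do. So the remaining work concerns the case $\sigma^*[F]=[F]$, where I must show that $\cE$ is of type 1 or of type 2, and in particular that ``not of type 1'' coincides with the condition stated in Definition~\ref{classification}(2), namely that some fibre is sent to a different fibre.

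The key observation is that, on a K3 surface, the elliptic fibration $\cE$ is the morphism associated to the complete linear system $|F|$: since $F$ is nef with $F^2=0$, Riemann--Roch gives $h^0(X,\cO_X(F))=2$, so $|F|\cong\PP^1$ and $\cE$ is exactly the induced morphism $X\to\PP(H^0(X,\cO_X(F))^\vee)\cong\PP^1$, whose fibres $F_v$ are precisely the members of $|F|$. Now if $\sigma^*[F]=[F]$, then $\sigma^*\cO_X(F)\cong\cO_X(F)$ as line bundles (recall $\Pic X=\NS X$ for a K3, since $q(X)=0$), so $\sigma$ acts linearly on the two-dimensional space $H^0(X,\cO_X(F))$ and hence induces an automorphism $g\in\Aut(\PP^1)$ with $\cE\circ\sigma=g\circ\cE$. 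Consequently $\sigma(F_v)=F_{g(v)}$ for every $v\in\PP^1$; in particular $\sigma$ permutes the fibres of $\cE$.

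From here the conclusion is immediate. If $g=\id_{\PP^1}$, then $\sigma(F_v)=F_v$ for every $v$ and $\cE$ is of type 1. If $g\neq\id_{\PP^1}$, then, $\PP^1$ being infinite, there is some $v$ with $g(v)\neq v$, so $\sigma(F_v)=F_{g(v)}$ is a fibre distinct from $F_v$; together with $\sigma^*[F]=[F]$ this is exactly the definition of type 2. The three types are visibly mutually exclusive (type 3 is separated from types 1 and 2 by whether $[F]$ is preserved, and types 1 and 2 are separated by whether every fibre is fixed), so every Jacobian elliptic fibration on $X$ is of exactly one of the types 1, 2, 3.

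I do not expect a genuine obstacle here; the only point requiring care is the middle step --- that preservation of the fibre class forces $\sigma$ to descend to an automorphism of the base $\PP^1$, so that images of fibres are again fibres --- which is what legitimizes the parenthetical reformulation of ``not of type 1'' in Definition~\ref{classification}(2). Everything else is a formal case split.
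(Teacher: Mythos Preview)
Your argument is correct and follows the same two-step case split as the paper: first on whether $\sigma^*[F]=[F]$, then on whether the induced automorphism of the base is trivial. The only difference is in how you produce the induced $g\in\Aut(\PP^1)$: the paper obtains $\sigma_{\PP^1}$ by restricting $\sigma$ to a section $C\cong\PP^1$ of the Jacobian fibration, whereas you obtain it from the action of $\sigma$ on $H^0(X,\cO_X(F))$ via the identification of $\cE$ with the map given by $|F|$. Both are standard; your route has the minor advantage of making explicit why $\sigma$ sends fibres to fibres (which the paper's phrasing leaves implicit), and of not using the existence of a section, while the paper's route avoids invoking Riemann--Roch and $\Pic X=\NS X$.
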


\begin{proof}
    Suppose $\sigma$ fixes the fibre class, that is, $\sigma([F]) = [F]$, and let $\PP^1 \cong C \subset X$ be a section of $\cE$. Then the restriction of $\sigma$ to $C$ induces an automorphism $\sigma_{\PP^1} \in \Aut(\PP^1)$. We know that $\sigma_{\PP^1}^p = \id_{\PP^1}$ since $\sigma^p = \id_X$. Then, the automorphism $\sigma_{\PP^1}$ acts on $\PP^1$ either as the identity or cyclically with order $p$. If $\sigma_{\PP^1}$ acts as the identity, then $\sigma$ preserves each fibre $F_v$ and $\cE$ is of type 1, otherwise $\sigma(F_v) = F_{\sigma_{\PP^1}(v)}$ and $\cE$ is of type 2.
    Finally, if $[F]$ is not fixed by $\sigma$, then $\sigma([F]) = [F']$ and $\cE$ is of type 3.
\end{proof}

\begin{remark}\label{no_type_3}
    If $\sigma$ acts trivially on $\NS X$, then $(X,\sigma)$ does not admit elliptic fibrations of type 3.
\end{remark}

The following two propositions are adapted from Proposition 2.5 and Theorem 2.6 in \cite{GS2}, respectively, considering non-symplectic automorphisms of any prime order. We present the proofs for completeness.

\begin{prop}\label{type_1_rank_0}
    Suppose $\cE$ is a Jacobian elliptic fibration of type 1 in $(X, \sigma)$, and $\sigma$ acts trivially in $\NS X$. Then every section of $\cE$ is fixed by $\sigma$. Consequently, $\rank MW(\cE) = 0$.
\end{prop}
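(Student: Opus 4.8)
The plan is to use the fact that a type 1 fibration has a fibre class $F$ fixed by $\sigma$, and that $\sigma$ acts trivially on all of $\NS X$, to rigidify sections via their numerical class. First I would recall that sections of $\cE$ are exactly the $(-2)$-curves $C$ with $C \cdot F = 1$, and that the zero-section $O$ is fixed by $\sigma$ — this follows because $\sigma$ maps $O$ to another section $\sigma(O)$, which has the same class in $\NS X$ as $O$ (since $\sigma$ acts trivially), and on a K3 surface an irreducible curve is determined up to linear equivalence, hence (being rigid, as $O^2 = -2$) determined as a curve. So $\sigma(O) = O$. The same argument applies verbatim to any section $s$: its image $\sigma(s)$ is an irreducible $(-2)$-curve with the same class as $s$, and a $(-2)$-class on a K3 contains a unique effective divisor, which is irreducible here, so $\sigma(s) = s$.

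The one subtlety is that "$\sigma$ fixes the curve $s$ setwise" is not literally the same as "$\sigma$ fixes $s$ as a section", i.e. as a morphism $\PP^1 \to X$; but since $\cE$ is of type 1, $\sigma$ preserves every fibre $F_v$, so $\sigma$ restricted to the curve $s \cong \PP^1$ sends the point $s \cap F_v$ to a point of $\sigma(F_v) = F_v$ lying on $\sigma(s) = s$, hence to $s \cap F_v$ again. Thus $\sigma|_s = \id_s$ pointwise, so $s$ is fixed as a section. This also re-proves $\sigma|_O = \id_O$, which is needed to even make sense of the Mordell--Weil group structure being $\sigma$-equivariant.

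For the consequence on the rank, I would argue as follows. The group $\MW(\cE) \cong E(k(\PP^1))$, and $\sigma$ acts on this group: the previous paragraph shows this action is trivial. Now suppose for contradiction $\rank \MW(\cE) > 0$, so there is a section $P$ of infinite order. Then $\{nP : n \in \ZZ\}$ is an infinite set of distinct sections, each of which must be $\sigma$-fixed — which is consistent, so this alone is not a contradiction. Instead I would use the height pairing / the structure of $\NS X$: since $\sigma$ acts trivially on $\NS X$, which has finite rank, and the classes of the infinitely many sections $nP$ are constrained by the Shioda--Tate isomorphism $\MW(\cE) \cong W_\cE / T$; but $\MW(\cE)$ being $\sigma$-fixed and $\NS X$ $\sigma$-fixed does not obviously bound the rank. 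The cleaner route — and the one I expect is intended — is: a non-symplectic $\sigma$ of order $p$ acting trivially on $\NS X$ must act non-trivially on $T_X = (\NS X)^{\perp}$; but more to the point, if $P$ has infinite order then the translation-by-$P$ automorphism $\tau_P \in \Aut(X)$ acts on $\NS X$ and, crucially, $\tau_P$ does \emph{not} act trivially on $\NS X$ in general — however $\sigma$ commuting with nothing in particular here.

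Let me instead give the argument I actually believe works: every section being $\sigma$-fixed means $\sigma$ fixes, pointwise, the curve $\bigcup_{s \in \MW(\cE)} s$. If $\rank \MW(\cE) \geq 1$, the sections are Zariski dense in $X$ (their union is dense since a positive-rank Mordell--Weil group gives infinitely many sections whose union is not contained in any fibre or finite union of curves — more precisely the closure is a $\sigma$-invariant divisor meeting the generic fibre in a dense, hence cofinite-complement, set, forcing it to be all of $X$). Then $\sigma$ fixes a Zariski-dense subset of $X$ pointwise, so $\sigma = \id_X$, contradicting that $\sigma$ is non-symplectic of order $p > 1$. Hence $\rank \MW(\cE) = 0$. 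The main obstacle is making the density step rigorous: I would phrase it as "the union of all sections, being $\sigma$-invariant and of positive rank, is not contained in a proper closed subset", using that on the generic fibre $E/k(\PP^1)$ the set $E(k(\PP^1))$ is infinite and $\sigma$-fixed, so $\sigma$ acts as the identity on the generic fibre, hence on $X$.
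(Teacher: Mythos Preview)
Your proof is correct, and the first half (every section is a $(-2)$-curve, unique in its class on a K3, hence $\sigma$-stable; then type~1 forces pointwise fixity) is exactly the paper's argument, spelled out in somewhat more detail.

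For the rank conclusion you take a slightly different route. The paper simply invokes the structure of $\Fix(\sigma)$ (Theorem~\ref{fixed_locus}): a non-symplectic automorphism of prime order fixes only \emph{finitely many} curves, so there cannot be infinitely many sections, and $\rank\MW(\cE)=0$ follows immediately. Your density argument is a valid substitute---infinitely many pointwise-fixed sections force the closed subscheme $\Fix(\sigma)$ to be $2$-dimensional, hence $\sigma=\id_X$---and in fact it is essentially a direct proof of the relevant piece of Theorem~\ref{fixed_locus}. So the two approaches are equivalent in content; the paper's is shorter because it cites the structural result, while yours is more self-contained. The detour through Shioda--Tate and translation automorphisms in your middle paragraph is unnecessary and can be deleted; you correctly abandon it.
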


\begin{proof}    
    Let $C \subset X$ be a rational curve. Since $\sigma$ acts trivially on $\NS X$, and rational curves are unique in their class on K3 surfaces, $\sigma(C) = C$. Then, if $\Sigma$ is a section of $\cE$, since $\sigma$ acts trivially on the base of the fibration, $\Sigma$ must be fixed by $\sigma$. The number of curves fixed by $\sigma$ is finite, so $\rank MW(\cE) = 0$.
\end{proof}

\begin{prop}\label{fixes_high_genus}
If $(X,\sigma)$ admits a Jacobian elliptic fibration of type 2, then $\sigma$ fixes curves of genus at most 1.
\end{prop}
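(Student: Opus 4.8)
The plan is to exploit the fact that a type 2 fibration $\cE$ descends to a genus-0 fibration on the rational quotient surface and to track what happens to the fixed curves of $\sigma$ under this descent. First I would observe that, since $\cE$ is of type 2, the induced automorphism $\sigma_{\PP^1}$ on the base $\PP^1$ has order $p$, hence (over a field of characteristic 0) exactly two fixed points $v_1, v_2 \in \PP^1$. Every fibre $F_v$ with $v \notin \{v_1, v_2\}$ is moved by $\sigma$ to a different fibre, so $\sigma$ cannot fix any curve that dominates the base; in particular any fixed curve of positive genus, as well as any fixed rational curve that is not contained in a fibre, is impossible — every irreducible curve in $\Fix(\sigma)$ must be a component of $F_{v_1}$ or $F_{v_2}$.

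Next I would bound the genus of a curve sitting inside a fibre. Let $C \subseteq F_{v_i}$ be an irreducible component fixed by $\sigma$. Because $\cE$ is relatively minimal and $X$ is a K3 surface, each fibre is one of the Kodaira types, and every irreducible component of a singular fibre is a smooth rational curve, while a smooth fibre is a curve of genus 1. Hence $C$ is either a smooth rational curve or (when $F_{v_i}$ is a smooth or irreducible singular fibre — type $I_0$, $I_1$, or $II$) a curve of arithmetic genus $\le 1$; in the type $I_1$ and $II$ cases the curve is singular but its normalization is rational, so it is not among the smooth fixed curves of Theorem \ref{fixed_locus} unless it is the genus-1 curve itself in the $I_0$ case. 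In all cases the geometric genus of a fixed curve is at most $1$, which is exactly the assertion.

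Finally I would assemble these observations against the structure of $\Fix(\sigma)$ given in Theorem \ref{fixed_locus}: the fixed locus is a disjoint union of isolated points and smooth curves, with at most one curve $C$ of genus $g$ and the rest rational. Combining this with the previous paragraph — all fixed curves lie in the two special fibres $F_{v_1}, F_{v_2}$, and any such curve has geometric genus $\le 1$ — we conclude $g \le 1$, so $\sigma$ fixes only curves of genus at most $1$. The main obstacle is the careful case analysis in the second paragraph: I must make sure that a fixed fibre $F_{v_i}$ cannot be, say, a smooth fibre of genus $1$ together with $\sigma$ acting on it with a further one-dimensional fixed locus, and more importantly that $\sigma$ restricted to a reducible fixed fibre genuinely fixes components pointwise rather than permuting them; handling the permutation action of $\sigma$ on the components of $F_{v_1}$ and $F_{v_2}$, and ruling out that a component could be non-rational, is where the real content lies, but since all components of reducible Kodaira fibres are $\PP^1$'s this ultimately forces $g \le 1$.
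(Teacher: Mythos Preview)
Your proof is correct and follows essentially the same route as the paper: a pointwise-fixed curve cannot dominate the base because a type~2 fibration has fibres moved entirely by $\sigma$, hence every fixed curve is a fibre component and therefore has genus at most~$1$. The paper's proof is a single short paragraph to this effect; your second and third paragraphs over-elaborate on non-issues (the permutation action on components and the $I_1$/$II$ cases are irrelevant, since all that is needed is that any fibre component has genus $\le 1$).
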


\begin{proof}
Let $\cE \colon X \to \PP^1$ be a type 2 elliptic fibration of $(X,\sigma)$, and $C \subset X$ a curve fixed by $\sigma$. Suppose $C$ is a multi-section of $\cE$. Then, $C$ intersects any fibre with positive multiplicity, and the intersection points must be fixed. However, since $\cE$ is of type 2, there are distinct fibres $F_v$ and $F_u$ such that $\sigma(F_v) = F_u$, so $F_v$ does not have fixed points. Therefore, $C$ must be a fibre component of $\cE$, so $g(C) \leq 1$.
\end{proof}

\subsection{Linear systems on rational elliptic surfaces}\label{res_linear_systems}

In section \ref{Induced_systems}, we study the relation between elliptic fibrations on K3 surfaces and linear systems on a related rational elliptic surface. With this in mind, it is important to define certain classes of linear systems over a general rational elliptic surface $\cE_R: R \rightarrow \PP^1$.

\begin{dfn}\label{cb_class}
    A \textit{conic bundle class} on $R$ is defined as a nef class $D$ on $\NS R$ such that
    \begin{itemize}
        \item[i)] $D \cdot (-K_{R}) = 2$.
        \item[ii)] $D^2 = 0$.
    \end{itemize}
\end{dfn}

If $D$ is a conic bundle class, then the induced morphism $\varphi_{|D|}: R \rightarrow \PP^1$ is a conic bundle on $R$, that is, a surjective morphism with a smooth genus 0 generic fibre (see \cite{renato}, Propositions 3.2 and 3.6).

Let $\eta: \tilde{R} \rightarrow R$ be the blow-up of $R$ in a finite number of points. A conic bundle class $D$ in $\tilde{R}$ only induces a conic bundle class in $R$ if $D$ does not intersect any of the exceptional divisors of $\eta$. Otherwise, the induced pencil $\eta_*|D|$ contains base points.

\begin{dfn}
    Let $R$ be a rational elliptic surface and $\eta:\tilde{R} \rightarrow R$ a blow-up map. We say that $D \in \NS R$ is a \textit{generalized conic bundle class} (with respect to $\eta$) if $D = \eta_* D'$, where $D'$ is a conic bundle class in $\tilde{R}$.
\end{dfn}

If a rational elliptic surface $R$ is relatively minimal, then the elliptic fibration $\cE_R: R \rightarrow \PP^1$ is unique. After taking a blow-up $\tilde{R}$, this is no longer necessarily true.

\begin{dfn}
    A \textit{splitting genus 1 pencil} on $\tilde{R}$ is a proper morphism $\varphi: \tilde{R} \rightarrow \PP^1$ such that
    \begin{itemize}
        \item[i)] $C_s := \varphi^{-1}(s)$ is a genus 1 curve for almost all $s \in \PP^1$.
        \item[ii)] $(C_s)(-K_{\tilde{R}}) = 0$ for all $s \in \PP^1$.
    \end{itemize}
    The pencil of curves $\{C_s\}_{s\in \PP^1}$ is also called a splitting genus 1 pencil.
\end{dfn}

\section{K3 surfaces with non-symplectic automorphisms of order 3}\label{K3_surfaces_ns_order3}

\subsection{Elliptic fibrations on $(X,\sigma)$}

In Section \ref{section-classification-prime}, we have seen how to classify Jacobian elliptic fibrations of a K3 surface in relation to a non-symplectic automorphism of prime order $p$. The case of $p=2$ was studied extensively by Garbagnati and Salgado in \cite{GS1} and \cite{GS2}. In what follows, we deal with the case $p = 3$. The following proposition describes a way in which the case of order 3 is special when compared to that of higher primes.

\begin{prop}\label{condition_type_1}
    Let $X$ be a K3 surface and $\sigma \in \Aut(X)$ a non-symplectic automorphism of prime order $p$. If $(X,\sigma)$ admits an elliptic fibration $\cE$ of  type 1, then $p = 2$ or $3$. Furthermore, if $p = 3$, the singular fibres of $\cE$ must be of type  $I_0^*, II, IV, II^*$ or $IV^*$.
\end{prop}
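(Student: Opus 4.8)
The plan is to analyze the action of $\sigma$ on a fibre of $\cE$ of type 1. Since $\cE$ is of type 1, $\sigma$ maps each fibre $F_v$ to itself, so it restricts to an automorphism $\sigma_v$ of the (generally smooth) elliptic curve $F_v$ for all but finitely many $v$. This restriction need not fix the zero-section point, but composing with translation it induces an automorphism of the elliptic curve $(F_v, O_v)$ fixing the origin; such an automorphism has order dividing $\#\Aut(E, O)$, which is $2, 4$, or $6$. On the other hand, $\sigma_v$ has order dividing $p$. First I would argue that $\sigma_v$ cannot be a translation for generic $v$: a translation acts trivially on $H^{2,0}$, but $\sigma$ is non-symplectic, and the holomorphic $2$-form restricts fibrewise compatibly with the global action, so $\sigma$ acting non-trivially on $\omega_X$ forces the fibrewise action to be non-trivial on the fibre's own differential — i.e. $\sigma_v$ fixes a point and acts as a genuine automorphism of $(E_v, \cdot)$ of order $p$. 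Combining "order divides $p$" with "order divides $2$, $4$, or $6$" gives $p \in \{2, 3\}$, which is the first assertion.

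Next, for $p = 3$, I would pin down the singular fibres. On a smooth fibre $F_v = E_v$, the automorphism $\sigma_v$ of order $3$ fixing a point forces $E_v$ to have $j$-invariant $0$ (complex multiplication by $\zeta_3$); hence \emph{every} smooth fibre has $j = 0$, so $j$ is constant equal to $0$ along $\cE$. This is the key structural consequence. A relatively minimal elliptic fibration with constant $j$-invariant equal to $0$ is (after base change) isotrivial with fibre $y^2 = x^3 + t$, and its admissible Kodaira fibre types are exactly those occurring for $j = 0$ potentially-good-reduction fibres: $II$, $IV$, $I_0^*$, $IV^*$, $II^*$ (together with smooth $I_0$). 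I would cite the standard list — e.g. the Kodaira–Néron classification with the constraint $j \equiv 0$, as in Miranda's Table, or the classification of isotrivial fibrations — to rule out all $I_n$ with $n \geq 1$, all $I_n^*$ with $n \geq 1$, and $III$, $III^*$ (these force $j = 1728$ or $j = \infty$). That yields precisely the list $I_0^*, II, IV, II^*, IV^*$.

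The main obstacle I anticipate is the step asserting that the fibrewise action of a non-symplectic $\sigma$ is non-trivial on the generic fibre, i.e. ruling out that $\sigma_v$ is a translation for generic $v$. One clean way: the relative dualizing sheaf $\omega_{X/\PP^1}$ pulls back, and the global $2$-form $\omega_X$ decomposes as (relative differential) $\wedge$ (base differential) up to the fibration data; $\sigma$ acting by $\zeta_3$ on $\omega_X$ while acting on the base $\PP^1$ by an automorphism of order dividing $3$ means that on at least the fixed fibres of $\sigma_{\PP^1}$ — and there are at least two, since an order-$3$ automorphism of $\PP^1$ has two fixed points — the action on the relative differential is by a nontrivial cube root of unity, so $\sigma$ restricted to those fibres is not a translation. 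Since type 1 means $\sigma_{\PP^1} = \id$, \emph{every} fibre is fixed, in particular $\sigma$ acts non-trivially on the differential of the generic fibre, giving the CM constraint. Making this differential-form bookkeeping precise (and handling the fibres where $\sigma_{\PP^1}$ is the identity versus the genuinely type-1 case where it always is) is where I would spend the most care; everything after "the $j$-invariant is identically $0$" is a citation to the Kodaira table.
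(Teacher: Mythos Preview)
Your proposal is correct and follows essentially the same route as the paper: restrict $\sigma$ to the smooth fibres, invoke the classification of automorphism groups of elliptic curves to force $p\in\{2,3\}$, then for $p=3$ deduce that the $j$-invariant is identically $0$ and read off the admissible singular fibre types from Kodaira's table. Your explicit treatment of why $\sigma_v$ cannot be a pure translation (via non-symplecticity and the action on the holomorphic form) fills in a detail that the paper's proof of this proposition leaves implicit.
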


\begin{remark}
    This list of possible fibres for an elliptic fibrations of type 1 with respect to a non-symplectic automorphism of order 3 proven in \cite{Ohashi-Taki}, with the added assumption that $\sigma$ fixes a curve of genus $g \geq 2$. The proof is the same, and we present it here for completion.
\end{remark}

\begin{proof}
    If $\cE$ is an elliptic fibration of type 1, then $\sigma$ acts as an automorphism of order $p$ on each fibre. In particular, for smooth fibres, $\sigma$ is an automorphism on an elliptic curve. The only admissible automorphism groups of elliptic curves are $\ZZ/2$, $\ZZ/4$ and $\ZZ/6$ (\cite{silverman}, Chapter III, Theorem 10.1), so $p$ can only be 2 or 3.
    If $p = 3$, then there is a $\ZZ/6$ action on each smooth fibre of $\cE$, and their short Weierstrass form must be $y^2 = x^3 + B$. Consequently, the $J$-function of $\cE$ is constant and equal to zero. The only types of singular fibres with $J(F) = 0$ are $I_0^*, II, IV, II^*$ and $IV^*$ (see \cite{miranda}, Table IV.3.1).
\end{proof}

\begin{remark}
    If $p=2$ and the quotient $X/ \sigma$ is a relatively minimal rational elliptic surface, the singular fibres of a fibration of type 1 were classified in \cite{GS1}, Theorem 5.3. 
\end{remark}

Sufficient conditions for the existence of a Jacobian elliptic fibration of type 1 were given in \cite{AS}. 

\begin{prop}
    If $\sigma$ acts trivially in $\NS(X)$ and fixes at least 2 curves, then $(X,\sigma)$ admits a Jacobian elliptic fibration of type 1.
\end{prop}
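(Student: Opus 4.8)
The plan is to construct the desired fibration of type 1 explicitly from the fixed data, using the hypothesis that $\sigma$ fixes at least two curves together with the structure theorem for $\Fix(\sigma)$ (Theorem \ref{fixed_locus}) and the characterization of Jacobian elliptic fibrations on K3 surfaces as embeddings of $U$ into $\NS X$ (Proposition \ref{K3_elliptic_fibration}). First I would recall that, by Theorem \ref{fixed_locus}, since $\sigma$ has order $3$ and fixes at least two curves, $\Fix(\sigma)$ contains a curve $C$ of genus $g$ and at least one rational curve $C_1$ (and possibly more rational curves and isolated points). Since $\sigma$ acts trivially on $\NS X$, every smooth rational curve is $\sigma$-invariant, and more importantly the two fixed curves give classes in $\NS X$ that one can combine into a hyperbolic plane. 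The idea, following the analogous argument of Garbagnati--Salgado in \cite{GS1}, is that a genus $1$ fixed curve or a suitable combination of fixed curves produces a nef class $F$ with $F^2 = 0$ and $F \cdot (-K_X) = F^2 = 0$, hence an elliptic pencil, and then one checks the existence of a section whose class is also among the $\sigma$-fixed (hence $\sigma$-invariant) curves.

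The key steps, in order, would be: (1) use the classification of $\Fix(\sigma)$ and the list in Proposition \ref{condition_type_1} to identify which configurations of fixed curves can occur; in particular, analyze the local action of $\sigma$ near the fixed curves to see that the generic fibre of any $\sigma$-invariant elliptic pencil with a fixed multisection must in fact be pointwise fixed, forcing type 1 rather than type 2 or 3. (2) Produce the elliptic pencil: either directly exhibit a genus $1$ curve $C'$ among the fixed locus or in a configuration forced by two fixed curves, take $F = [C']$, and verify $F$ is nef with $F^2 = 0$ so that $|F|$ defines a genus $1$ fibration $\cE \colon X \to \PP^1$. (3) Show $\cE$ has a section: since $\sigma$ acts trivially on $\NS X$, by Proposition \ref{type_1_rank_0}-type reasoning every rational curve is $\sigma$-invariant, and one of the remaining fixed rational curves $C_i$ (or a component thereof) must meet $F$ in a single point, giving a section; this makes $\cE$ Jacobian. (4) Finally, confirm the fibration is of type 1: because $\sigma$ fixes a section and acts trivially on $\NS X$, the induced automorphism $\sigma_{\PP^1}$ on the base fixes the image of that section, and since it fixes the fibre class and, by the genus and intersection analysis, cannot act with order $3$ on $\PP^1$ while keeping a curve of genus $g$ in the fibres fixed, it must be the identity on $\PP^1$, so every fibre is preserved.

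I expect the main obstacle to be step (2)--(3): exhibiting the elliptic pencil in a way that is uniform across all the admissible fixed-locus configurations (different genera $g$, different numbers of rational curves and isolated points as tabulated by Artebani--Sarti). The cleanest route is probably to not construct $F$ by hand at all but to invoke the lattice-theoretic result of \cite{AS} on the Néron--Severi lattices of such $(X,\sigma)$: knowing $\NS X$ explicitly, one finds an embedding $U \hookrightarrow \NS X$ whose orthogonal complement (the frame lattice, by Proposition \ref{K3_elliptic_fibration}) is compatible with a $J$-invariant-zero fibration, i.e. one whose reducible fibres lie in the list $I_0^*, II, IV, II^*, IV^*$ of Proposition \ref{condition_type_1}. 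Then the hard work is showing that the resulting fibration is genuinely of type 1 and not merely fibre-class-invariant; here one uses that $\sigma$ fixing at least two curves rules out the type 2 behaviour of Proposition \ref{fixes_high_genus} when the fixed curve has genus $\geq 2$, and a direct check handles the low-genus cases. A careful bookkeeping of the fixed-locus invariants against the tables of \cite{AS} should close the argument.
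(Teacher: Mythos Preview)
The paper does not prove this itself; it simply cites \cite{AS}, Proposition 4.2. So there is nothing in the paper to compare your sketch against beyond the fact that both you and the paper ultimately defer to Artebani--Sarti.

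Your direct construction has a real gap precisely where you flag it, at steps (2)--(3). The fixed curves of $\sigma$ are pairwise disjoint (Theorem \ref{fixed_locus}), so if you take $F$ to be the class of a genus-$1$ fixed curve $C$, every other fixed curve $C_i$ satisfies $C_i \cdot F = 0$ and lands in a fibre, never as a section; the claim that ``one of the remaining fixed rational curves $C_i$ \ldots must meet $F$ in a single point'' is not justified by anything in the outline. Without a \emph{pointwise-fixed} section, step (4) does not go through either: a merely $\sigma$-invariant section only tells you that $\sigma$ permutes the fibres, and an order-$3$ automorphism of $\PP^1$ has exactly two fixed points, so even two fixed fibres do not force $\sigma_{\PP^1} = \id$. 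Invoking ``Proposition \ref{type_1_rank_0}-type reasoning'' here is also circular, since that proposition assumes the fibration is already of type 1.

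Your fallback lattice approach is closer in spirit to what \cite{AS} actually does, but ``a direct check handles the low-genus cases'' is not yet a proof. Exhibiting a copy of $U \subset \NS X$ whose frame has root type supported on $\{A_2, D_4, E_6, E_8\}$ shows only that the reducible fibres are \emph{compatible} with type 1 (Proposition \ref{condition_type_1}); it does not by itself exclude type 2. The argument in \cite{AS} builds the fibration from an explicit linear system tied to the fixed curves and verifies directly that $\sigma$ acts fibrewise; you would need to carry that out rather than point at the lattice tables.
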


\begin{proof}
    See Propostion 4.2 in \cite{AS}.
\end{proof}

A natural way to exhibit explicitly a pair $(X,\sigma)$ with $X$ a K3 surface and $\sigma$ a non-symplectic automorphism is to start from a rational elliptic surface $\cE \colon R \to \PP^1$, and consider its base change by a Galois covering $\tau \colon \PP^1 \to \PP^1$. Then, by resolving singularities and contracting $(-1)$-curves of the resulting surface $R \times_{\PP^1} \PP^1$, we obtain a surface $X$ with a relatively minimal Jacobian elliptic fibration $\cE_X \colon X \to \PP^1$. If $X$ is a K3 surface, the Galois morphism on $\PP^1$ induced by $\tau$ lifts to an automorphism $\sigma$ of $X$ with order equal to the degree of $\tau$. Since the quotient by $\sigma$ is birational to $R$, by \ref{quotientK3} $\sigma$ is non-symplectic.

\begin{prop}\label{basechange_is_k3}
    Let $\cE \colon R \to \PP^1$ be a rational elliptic surface, $\tau \colon \PP^1 \to \PP^1$ a cubic Galois covering ramified at $a,b \in \PP^1$ and $F_a, F_b$ the fibres of $\cE$ above $a$ and $b$. The surface $X$ obtained taking the base change of $\cE$ by $\tau$ is a K3 if and only if one of $F_a$, $F_b$ is of type $I_n^*$ or $IV$, while the other is of type $I_n, II$ or $III$.
\end{prop}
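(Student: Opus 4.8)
The plan is to analyze the base change via the Euler characteristic formula, since a smooth projective surface with a Jacobian elliptic fibration over $\PP^1$ is a K3 surface precisely when its Euler characteristic equals $24$ (equivalently, the holomorphic Euler characteristic $\chi(\cO) = 2$). Starting from the rational elliptic surface $\cE\colon R \to \PP^1$, we have $e(R) = 12$, and after base change by the degree $3$ map $\tau$ ramified exactly at $a, b$, we pass to the smooth relatively minimal model $X$ and must compute $e(X)$.

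First I would set up the local contributions. Away from $a$ and $b$, the map $\tau$ is an unramified triple cover, so each singular fibre $F_v$ of $\cE$ with $v \neq a,b$ contributes three fibres to $X$ of the same Kodaira type, hence $3\,e(F_v)$ to the Euler characteristic. Over $a$ and $b$, the cover is totally ramified, and the fibre of the pulled-back fibration over the ramification point is governed by the classical formula for the Kodaira type of a fibre under a degree $3$ base change totally ramified there (this is the standard table, e.g. in \cite{miranda} or via Tate's algorithm): the new fibre type depends on the original type and on $3 \bmod (\text{period})$. The key numerical point is to isolate which pairs of original fibre types $(F_a, F_b)$ make $e(X) = 24$. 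Writing $e(X) = 3\big(12 - e(F_a) - e(F_b)\big) + e(F_a') + e(F_b')$, where $F_a', F_b'$ are the fibres of the base-changed (and minimalized) fibration over the ramification points, the condition $e(X) = 24$ becomes $e(F_a') + e(F_b') = 3\,e(F_a) + 3\,e(F_b) - 12$.

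Next I would run through the base-change table for degree $3$ totally ramified covers to tabulate $e(F_v')$ in terms of $e(F_v)$: for instance $I_n \mapsto I_{3n}$ (so $e$: $n \mapsto 3n$), $II \mapsto I_0^*$ ($e$: $2 \mapsto 6$), $III \mapsto III^*$ ($e$: $3 \mapsto 9$), $IV \mapsto IV$ or $I_0$ after minimalization, $I_n^* \mapsto I_{3n}^*$, $IV^* \mapsto IV^*$ or again something to be read off, etc., together with the requirement that $X$ be relatively minimal (contracting $(-1)$-curves where the base change produces non-minimal fibres, which happens exactly in the $IV$, $IV^*$, $II^*$ cases and must be handled carefully). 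Plugging these into the displayed equation, one checks that the only solutions are the claimed ones: one of $F_a, F_b$ of type $I_n^*$ or $IV$, the other of type $I_n, II$ or $III$. I would also verify the converse direction — that in each listed case $X$ is genuinely a K3 (not, say, a blown-up rational or abelian surface) — which follows since $X$ carries an elliptic fibration, is simply connected (as $\tau$ is ramified, so $X \to \PP^1$ has no multiple fibres and the base is $\PP^1$), and has the right Euler number, forcing $\chi(\cO_X) = 2$ and $K_X = 0$.

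The main obstacle is bookkeeping the non-minimal cases correctly: a degree $3$ base change of an additive fibre can yield a fibre that is not relatively minimal, and one must contract the appropriate $(-1)$-curves and track how both the Euler characteristic and the Kodaira type change under this contraction before comparing with $24$. A secondary subtlety is that certain Kodaira types (like $IV$) have period dividing $3$ under base change so the ramified fibre can "stay the same" up to minimalization, and one has to be sure not to double-count or mislabel those; careful use of the standard base-change/Tate-algorithm table resolves this, and the arithmetic then pins down exactly the asymmetric pairing in the statement.
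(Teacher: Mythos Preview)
Your proposal is correct and takes essentially the same approach as the paper: both reduce the question to $e(X)=24$, use $e(R)=12$, and invoke Miranda's base-change table (VI.4.1) to compute the Euler numbers of the fibres over the ramification points. The only minor difference is that the paper justifies the equivalence ``Jacobian elliptic surface with $e(X)=24$ $\Leftrightarrow$ K3'' via the canonical bundle formula, Noether's formula, and Serre duality, whereas you argue through simple connectedness and $\chi(\cO_X)=2$; Miranda's table already accounts for passing to the relatively minimal model, so your worry about tracking contractions separately is unnecessary.
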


\begin{proof}
    By the canonical divisor formula for elliptic surfaces (see \cite{schutt-shioda}, Theorem 5.28), Noether's Formula (see \cite{beauville}, I.14) and Serre duality (\cite{beauville}, Theorem I.11), a Jacobian elliptic surface $X$ is K3 if and only if $e(X) = 24$. By \cite{schutt-shioda}, Theorem 5.31, $e(X) = \sum_{v \in \PP^1} e(F^X_v)$, where $F^X_v = \cE_X^{-1}(v)$. Let $u,v$ be points in $\PP^1$ such that $\tau(v) = u$. Then, the Euler number of $F^X_{v}$ is known in terms of the Kodaira type of $F_u$ and the ramification index $r(v|u)$ (see \cite{miranda}, Table VI.4.1). Knowing that $e(R) = \sum_{u \in \PP^1} e(F_u) = 12$, we obtain the result by checking for which Kodaira types of $F_a$ and $F_b$ we obtain $e(X) = 24$.
\end{proof}

\subsection{Singular fibres on fibrations of type 2}

Let $(X,\sigma)$ be a K3 surface with an automorphism of order 3. The goal of this section is to describe the configuration of singular fibres on a Jacobian elliptic fibration $\cE$ of type 2 in $(X,\sigma)$. Notice that if $\cE \colon X \to \PP^1$ comes from a degree 3 base change of a rational elliptic surface and $\sigma$ is the induced Galois automorphism, then $\cE$ is of type 2 in $(X,\sigma)$. Then, Proposition \ref{basechange_is_k3} limits the possible configurations of singular fibres in $\cE$. In what follows, we show that \textit{every} Jacobian elliptic fibration of type 2 in $(X,\sigma)$ comes from such a base change. As a first step, we study the properties of the quotient $X/\sigma$.

By Theorem \ref{quotientK3}, we know $X/\sigma$ is rational, but in general, it is not a rational elliptic surface. Since $\sigma$ is an automorphism of order 3, it admits isolated fixed points (Theorem \ref{fixed_locus}).  If $x\in X$ is an isolated fixed point of $\sigma$, then the local action around $x$ is given by the linear map
\begin{equation*}
    A_{3,1} = \begin{pmatrix}
        \zeta_3^2 & 0 \\
        0 & \zeta_3^2
    \end{pmatrix}.
\end{equation*}

Let $\pi \colon X \to X/\sigma$ be the quotient map. Analyzing the action of $A_{3,1}$, we can infer that $\pi(x)$ is a singularity of type $\tfrac{1}{3}(1,1)$ (see \cite{Reid-singularities}). In order to circumvent this, we can first blow-up the isolated fixed points of $\sigma$.

\begin{prop}\label{blow-up-fixed-pts}
    Let $\eta_X \colon \tilde{X} \to X$ be the blow-up of the isolated fixed points of $\sigma$. Then, the following statements are true.
    \begin{itemize}
        \item[i)] Every Jacobian elliptic fibration $\cE \colon X \to \PP^1$ lifts to a Jacobian elliptic fibration $\tilde{\cE} \colon \tilde{X} \to \PP^1$, and both fibrations are isomorphic on an open set of $\PP^1$.
        \item[ii)] We can lift $\sigma$ to an automorphism $\tilde{\sigma}$ of $\tilde{X}$ which fixes the exceptional curves of $\eta_X$.
        \item[iii)] Let $\tilde{R} := \tilde{X}/ \tilde{\sigma}$, and $\tilde{\pi} \colon \tilde{X} \to \tilde{R}$ be the quotient map. Then, $\tilde{R}$ is isomorphic to the minimal resolution $\phi$ of $X/\sigma$, and the following diagram commutes.
    \end{itemize}
    \begin{center}
    \begin{tikzcd}
        \tilde{R} \arrow[d, swap, "\phi"] & \tilde{X} \arrow[d, "\eta_{X}"] \arrow[l, "\tilde{\pi}"] \\
        X/\sigma & X \arrow[l,"\pi"] 
    \end{tikzcd}
\end{center}
\end{prop}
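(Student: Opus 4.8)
The plan is to treat the three items essentially independently, but with (ii) providing the technical core that (iii) then exploits.

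For item (i): I would argue that blowing up a point $x \in X$ that lies on a fibre $F_v = \cE^{-1}(v)$ does not affect the fibration away from $v$, since $\eta_X$ is an isomorphism on $X \setminus \{P_1,\dots,P_n\}$. So $\cE \circ \eta_X \colon \tilde X \to \PP^1$ is a morphism whose generic fibre is still a smooth genus 1 curve, hence it is an elliptic fibration, and the strict transform of the zero-section gives a section. The only subtlety is that $\cE \circ \eta_X$ need not be relatively minimal: each blow-up of a point on a fibre creates a $(-1)$-curve inside that fibre. But the statement only claims $\tilde{\cE}$ is a Jacobian elliptic fibration and agrees with $\cE$ on an open set of $\PP^1$, which is immediate; I would simply record that on $\PP^1 \setminus \{v : P_i \in F_v\}$ the two fibrations are isomorphic.

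For item (ii): since $\sigma$ permutes the isolated fixed points $P_1,\dots,P_n$ and in fact fixes each of them (they are the fixed locus), $\sigma$ extends to an automorphism $\tilde\sigma$ of the blow-up $\tilde X = \mathrm{Bl}_{\{P_i\}} X$ by the universal property of blowing up — an automorphism fixing the center of a blow-up lifts canonically. To see $\tilde\sigma$ fixes each exceptional curve $E_i$ pointwise, I would look at the local analytic model: near $P_i$, $\sigma$ acts by the matrix $A_{3,1} = \zeta_3^2 \cdot \mathrm{id}$, i.e. as a scalar on the tangent space $T_{P_i}X$. Blowing up $P_i$ replaces it by $\PP(T_{P_i}X)$, and the induced action of a scalar matrix on a projective space is trivial. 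Hence $\tilde\sigma$ restricts to the identity on each $E_i$. This is the step I expect to be the main obstacle, not because it is deep but because it requires being careful about what "fixes the exceptional curves" means (pointwise, via the scalar-action observation) and about checking that $\tilde\sigma$ still has order $3$ and is still non-symplectic — the latter because $\eta_X^* \omega_X$ spans $H^0(\tilde X, K_{\tilde X})$ (blowing up points does not change $h^0$ of the canonical bundle for a surface with $K_X$ trivial, as $K_{\tilde X} = \eta_X^* K_X + \sum E_i$ and the $E_i$ are $\tilde\sigma$-invariant), so $\tilde\sigma^* (\eta_X^*\omega_X) = \eta_X^*(\sigma^*\omega_X) = \zeta_3 \eta_X^*\omega_X$.

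For item (iii): once $\tilde\sigma$ fixes the $E_i$ pointwise, the quotient $\tilde R = \tilde X/\tilde\sigma$ is smooth along the image of $E_i$ — the point is that after blowing up, the fixed locus of $\tilde\sigma$ near $E_i$ is the divisor $E_i$ itself (codimension 1), and quotients of smooth surfaces by finite groups acting with divisorial fixed locus are smooth (the local action is now $\mathrm{diag}(\zeta_3,1)$ in suitable coordinates, whose quotient is smooth). The remaining isolated quotient singularities of $X/\sigma$ coming from... — actually there are none left, since the only isolated fixed points were the $P_i$ and we have resolved exactly those, while curves in $\Fix(\sigma)$ already give divisorial fixed locus and hence smooth quotient points. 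So $\tilde R$ is smooth, and it is birational to $X/\sigma$, dominated by the normalization/resolution $\phi \colon Y \to X/\sigma$. To identify $\tilde R$ with the \emph{minimal} resolution, I would note that $\tilde X \to X \to X/\sigma$ and $\tilde X \to \tilde R \to X/\sigma$ both factor the same map, and that $\tilde R \to X/\sigma$ is an isomorphism away from the images of the $E_i$ and contracts each $\pi(E_i)$-image appropriately; since a $\tfrac13(1,1)$ singularity has a minimal resolution with a single $(-3)$-curve, and blowing up one point upstairs produces exactly one exceptional curve downstairs, the map $\tilde R \to X/\sigma$ is precisely the minimal resolution. Finally, the commutativity of the square is formal: both composites $\tilde R \leftarrow \tilde X \to X \to X/\sigma$ and $\tilde R \to X/\sigma$ followed by... — more precisely, $\pi \circ \eta_X$ and $\phi \circ \tilde\pi$ are two morphisms $\tilde X \to X/\sigma$ that agree on the dense open set where $\eta_X$ is an isomorphism (there they are both just $\pi$ transported across), hence agree everywhere by separatedness. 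I would close by remarking that $\phi$ is then an isomorphism outside finitely many curves, which is all that is used later.
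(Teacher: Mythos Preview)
Your proposal is correct and follows essentially the same approach as the paper. The only notable difference is in (iii): where you argue minimality of $\tilde R \to X/\sigma$ from the fact that a single exceptional curve sits over each singular point, the paper instead computes directly via the projection formula that $C_i := \tilde\pi(E_i)$ satisfies $C_i^2 = E_i \cdot \tilde\pi^*(C_i) = 3E_i^2 = -3$, matching the known $(-3)$-curve in the minimal resolution of a $\tfrac{1}{3}(1,1)$ point; for (ii) the paper writes down explicit blow-up coordinates rather than invoking the projectivization-of-a-scalar argument, but the content is identical.
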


\begin{proof}
    i) Let $[F]$ be the class of a fibre and $\Sigma$ a section of $\cE$ in $\NS X$, and write $\tilde{F} := \eta_X^*(F)$ and $\tilde{\Sigma} := \eta_X^*(\Sigma)$. Since a general smooth fibre of $\cE$ does not intersect the isolated fixed points of $\sigma$,  the pencil $|\tilde{F}|$ induces an elliptic fibration on $\tilde{X}$ with $\tilde{\Sigma}$ as a section.

    ii) $\tilde{X}$ can be described locally around an exceptional divisor by coordinates $(z_1,z_2,t)$ with $z_2 = tz_1$. We define $\tilde{\sigma}$ by the map $(z_1,z_2,t) \mapsto (\zeta_3^2z_1,\zeta_3^2z_2,t)$. Then, $\tilde{\sigma}$ agrees with $\sigma$ outside of $z_1 = z_2 = 0$, and fixes the exceptional curve.

    iii) Let $Z = \{P_1,...,P_n\}$ be the set of isolated fixed points of $\sigma$, and ${\tilde{Z} = \{E_1,...,E_n\}}$ the set of corresponding exceptional curves on $\tilde{X}$. The open subsets $X\setminus Z$ and $\tilde{X}\setminus \tilde{Z}$ are isomorphic, and $\sigma$ and $\tilde{\sigma}$ agree under this identification. By taking the respective quotients, we obtain that the open sets $Y \setminus \phi^{-1}(\pi(Z))$ and $\tilde{R} \setminus \tilde{\pi}(\tilde{Z})$ are isomorphic. For any isolated fixed point $P_i$, we know that $E_i^2 = -1$ and it is fixed by $\tilde{\sigma}$. Then $C_i = \tilde{\pi}(E_i)$ is rational and its self intersection is $C_i^2 = C_i \cdot \tilde{\pi}_*(E_i) = E_i \cdot \tilde{\pi}^*(C_i) = 3E_i^2 = -3$. By \cite{Reid-singularities} (Example 3.1) the minimal resolution of a singularity of type $\tfrac{1}{3}(1,1)$ is a rational curve with self-intersection $-3$, so $Y$ and $\tilde{R}$ are isomorphic as claimed.
\end{proof}

\begin{prop} \label{type2_induces}
    Let $\cE \colon X \to \PP^1$ be a Jacobian elliptic fibration of type 2 in $(X,\sigma)$. Then, $\cE$ induces a Jacobian elliptic fibration $\cE_{\tilde{R}} \colon \tilde{R} \to \PP^1$, so $\tilde{R}$ is a rational elliptic surface.
\end{prop}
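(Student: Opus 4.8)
The plan is to realize $\cE_{\tilde{R}}$ as the quotient of the lift of $\cE$ to $\tilde{X}$ by the lifted automorphism $\tilde{\sigma}$, using the equivariance of the lifted fibration. The first step is to record that equivariance. Since $\cE$ is of type $2$, the discussion after Definition~\ref{classification} shows that $\sigma$ preserves the pencil $|F|$ and induces an automorphism $\sigma_{\PP^1}\in\Aut(\PP^1)$ of order exactly $3$ on the base, so that $\cE\circ\sigma=\sigma_{\PP^1}\circ\cE$. By Proposition~\ref{blow-up-fixed-pts}(i) the fibration lifts to $\tilde{\cE}\colon\tilde{X}\to\PP^1$, which on the open set where $\eta_X$ is an isomorphism coincides with $\cE\circ\eta_X$; since $\eta_X\circ\tilde{\sigma}=\sigma\circ\eta_X$ there, one obtains $\tilde{\cE}\circ\tilde{\sigma}=\sigma_{\PP^1}\circ\tilde{\cE}$ on a dense open set, hence everywhere. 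Thus $\tilde{\cE}$ is equivariant for $\tilde{\sigma}$ and $\sigma_{\PP^1}$; as $\tau\circ\sigma_{\PP^1}=\tau$ for the cyclic triple cover $\tau\colon\PP^1\to\PP^1/\langle\sigma_{\PP^1}\rangle\cong\PP^1$, the morphism $\tau\circ\tilde{\cE}$ is $\tilde{\sigma}$-invariant and so descends through $\tilde{\pi}\colon\tilde{X}\to\tilde{R}=\tilde{X}/\tilde{\sigma}$ to a morphism $\cE_{\tilde{R}}\colon\tilde{R}\to\PP^1$ with $\cE_{\tilde{R}}\circ\tilde{\pi}=\tau\circ\tilde{\cE}$. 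By Theorem~\ref{quotientK3}(iii) and Proposition~\ref{blow-up-fixed-pts}(iii) the surface $\tilde{R}$ is smooth, projective and rational.

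Next I would show that the general fibre of $\cE_{\tilde{R}}$ is a smooth curve of genus $1$. For a general $\bar{v}\in\PP^1$ the set $\tau^{-1}(\bar{v})=\{v_1,v_2,v_3\}$ is a free $\langle\sigma_{\PP^1}\rangle$-orbit and the fibres $F_{v_1},F_{v_2},F_{v_3}$ of $\tilde{\cE}$ are smooth of genus $1$. As $\tilde{\sigma}(F_{v_i})=F_{\sigma_{\PP^1}(v_i)}$, the automorphism $\tilde{\sigma}$ permutes these curves cyclically, hence acts freely on $S:=F_{v_1}\sqcup F_{v_2}\sqcup F_{v_3}=\tilde{\pi}^{-1}(\cE_{\tilde{R}}^{-1}(\bar{v}))$; therefore $\tilde{\pi}|_S\colon S\to\cE_{\tilde{R}}^{-1}(\bar{v})=S/\tilde{\sigma}$ is finite and étale of degree $3$, and its restriction to the connected component $F_{v_1}$ is an isomorphism onto $\cE_{\tilde{R}}^{-1}(\bar{v})$. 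Hence $\cE_{\tilde{R}}$ is an elliptic fibration on the rational surface $\tilde{R}$.

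It remains to produce a section, and this is where I expect the main difficulty. Assume $\cE$ has a $\sigma$-invariant section $\Sigma$; after conjugating $\sigma$ by a translation automorphism $t_P$ ($P\in\MW(\cE)$) — an operation which changes neither the order of $\sigma$ nor $\tilde{R}$ up to isomorphism — one may take $\Sigma$ to be the zero section. Its lift $\tilde{\Sigma}\subset\tilde{X}$ is $\tilde{\sigma}$-invariant, so $\tilde{\pi}(\tilde{\Sigma})$ meets the general fibre $\cE_{\tilde{R}}^{-1}(\bar{v})\cong F_{v_1}$ in the single point $\tilde{\pi}(\tilde{\Sigma}\cap F_{v_1})$ and is therefore a section of $\cE_{\tilde{R}}$; thus $\tilde{R}$ is a rational elliptic surface. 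The crux is the existence of a $\sigma$-invariant section: the sections of $\cE$ form a torsor under $\MW(\cE)$ on which $\sigma$ acts, and one must ensure this action has a fixed point. An alternative route, sidestepping this, is to show that $\cE_{\tilde{R}}$ has no multiple fibres — away from the two branch points of $\tau$ its fibres are fibres of the Jacobian fibration $\cE$, while over the branch points the rationality of $\tilde{R}$ severely restricts the possible multiplicities — and then to invoke that a relatively minimal rational surface carrying an elliptic fibration is Jacobian, pulling its section back to $\tilde{R}$ through the blow-downs. The remaining points, in particular the bookkeeping across $\eta_X$ and the verification that none of $F_{v_1},F_{v_2},F_{v_3}$ meets $\Fix(\tilde{\sigma})$, are routine.
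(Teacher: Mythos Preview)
Your argument follows the same skeleton as the paper's: lift $\cE$ to $\tilde\cE$ on $\tilde X$, push down to $\tilde R$, check that the general fibre has genus $1$, and then produce a section. The paper phrases the descent in terms of the pushforward $D=\tilde\pi_*\tilde F$ of the fibre class and verifies $D^2=0$ and $g(D_v)=1$ via the projection formula and Riemann--Hurwitz, whereas you descend the morphism $\tau\circ\tilde\cE$ directly through the quotient and read off the general fibre as $F_{v_1}$. These are equivalent viewpoints and neither is materially harder.

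Where the two arguments part ways is exactly the point you flag. The paper takes a section $\tilde\Sigma$ of $\tilde\cE$, sets $C=\tilde\pi_*\tilde\Sigma$, and asserts $D\cdot C=\tilde F\cdot\tilde\Sigma=1$. Unwinding the projection formula, this reads $\tilde F\cdot\tilde\pi^{*}C=\tilde F\cdot\tilde\Sigma$, which amounts to $\tilde\pi^{*}C=\tilde\Sigma$, i.e.\ to $\tilde\Sigma$ being $\tilde\sigma$-invariant; if $\tilde\Sigma,\ \tilde\sigma\tilde\Sigma,\ \tilde\sigma^{2}\tilde\Sigma$ are distinct one obtains $D\cdot C=3$ instead and $C$ is only a trisection. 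So the paper is tacitly using the same invariant-section hypothesis you isolate, and does not resolve it more cheaply than you do. Your second route closes the gap cleanly: once $\cE_{\tilde R}$ is a genus-$1$ fibration on a smooth rational surface, its generic fibre is a smooth genus-$1$ curve over $k(\PP^1)$, and since $k$ is algebraically closed this field is $C_1$ by Tsen, so the curve has a rational point and $\cE_{\tilde R}$ is Jacobian. This is simpler than hunting for a $\sigma$-fixed point in the $\MW$-torsor, and it makes the ``no multiple fibres'' verification unnecessary.
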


\begin{proof}
    By Proposotion \ref{blow-up-fixed-pts}, $\cE$ induces an elliptic fibration $\tilde{\cE}$ on $\tilde{X}$. Let $\tilde{F}$ be the fibre class and $\tilde{\Sigma}$ a section of $\tilde{\cE}$, and $D, C$ their respective pushforwards by $\tilde{\pi}$. We claim that the pencil $|D|$ induces an elliptic fibration on $\tilde{R}$ with section $C$.

    Since $\cE$ is of type 2, every curve fixed by $\sigma$ is a fibre component. Then, for all but finitely many choices of $v_1 \in \PP^1$, there are three distinct smooth fibres $\tilde{F}_{v_1}$, $\tilde{F}_{v_2}$ and $\tilde{F}_{v_3}$ in an orbit of $\tilde{\sigma}$. Consequently, for a generic choice of $D_v$ in the pencil $|D|$, the map $\tilde{\pi}$ defines a cubic covering of $D_v$ by 3 disjoint smooth genus 1 curves. By the Riemann-Hurwitz Theorem, $D_v$ must also be smooth of genus 1, and $|D|$ a genus 1 pencil. Furthermore, $D^2 = \tilde{\pi}_* \tilde{F} \cdot D_v = \tilde{F} \cdot \tilde{\pi}^*(D_v) = \tilde{F} \cdot (\tilde{F}_{v_1} + \tilde{F}_{v_2} + \tilde{F}_{v_3}) = 0$,  so $|D|$  induces an elliptic fibration $\cE_{\tilde{R}} \colon \tilde{R} \to \PP^1$. By the intersection product $D \cdot C = \tilde{F} \cdot \tilde{\Sigma} = 1$, $C$ is a section and we conclude that $\cE_{\tilde{R}}$ is a Jacobian elliptic fibration.
\end{proof}

Let $R$ be the relatively minimal model of $\tilde{R}$ with respect to the elliptic fibration $\cE_{\tilde{R}}$. There is a blow-down $\eta_R: \tilde{R} \rightarrow R$ and $R$ is endowed with a relatively minimal elliptic fibration $\cE_R: R \rightarrow \PP^1$ such that $\cE_{\tilde{R}} = \eta_R \circ \cE_R$.

\begin{prop}\label{basechange_R}
    Let $\sigma_{\PP^1} \colon \PP^1 \to \PP^1$ be the action induced by $\sigma$ on the base of $\cE$, and $\tau \colon \PP^1 \to \PP^1$ the quotient by $\sigma_{\PP^1}$. Then, the relatively minimal elliptic surface induced by the base change of $\cE_{R} \colon R \to \PP^1$ by $\tau$ is $\cE_X \colon X \to \PP^1$, and the induced automorphism is $\sigma$.
\end{prop}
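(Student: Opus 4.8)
The plan is to realise $X$ as the relative minimalisation of an explicit birational model of the base change, and then to quote the uniqueness of relatively minimal elliptic surfaces. Keep the notation of Propositions \ref{blow-up-fixed-pts} and \ref{type2_induces}, and write $\PP^1_0$ for the base of $\cE_{\tilde R}$ (equivalently of $\cE_R$), so that $\tau \colon \PP^1 \to \PP^1_0$ is the quotient by $\sigma_{\PP^1}$. The base maps fit into a commutative square, $\tau \circ \tilde{\cE} = \cE_{\tilde R} \circ \tilde{\pi}$, because $\tilde\pi$ carries the fibre of $\tilde\cE$ over $v$ into the fibre of $\cE_{\tilde R}$ over $\tau(v)$; this yields a morphism $\Phi := (\tilde\pi, \tilde\cE) \colon \tilde X \to \tilde R \times_{\PP^1_0} \PP^1$ of surfaces over $\PP^1$.

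The first step is to show $\Phi$ is birational. Let $U \subset \PP^1_0$ be the dense open over which $\cE_{\tilde R}$ has smooth fibres and $\tau$ is unramified, and take $v \in \tau^{-1}(U)$ with $\tau(v) = w$. By (the proof of) Proposition \ref{type2_induces} one has $\tilde\pi^{*} D_w = \tilde F_{v_1} + \tilde F_{v_2} + \tilde F_{v_3}$ with the $\tilde F_{v_i}$ pairwise disjoint and each mapped isomorphically onto $D_w$ by $\tilde\pi$, while the fibre of $\tilde R \times_{\PP^1_0} \PP^1$ over $v$ is a copy of $D_w$; hence $\Phi$ restricts to an isomorphism on fibres over $\tau^{-1}(U)$, so it is an isomorphism over $\tau^{-1}(U)$ and in particular birational. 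Composing with the base change of the blow-down $\eta_R$ shows $\tilde X$ is birational over $\PP^1$ to $R \times_{\PP^1_0} \PP^1$.

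The second step passes to relatively minimal models. Each point blown up by $\eta_X$ lies on a fibre of $\cE_X$ and $\tilde\cE = \cE_X \circ \eta_X$, so the exceptional curves of $\eta_X$ are $(-1)$-curves contained in fibres of $\tilde\cE$; thus $\eta_X$ is a relative minimalisation and $\cE_X$ is the relatively minimal model of $\tilde\cE$. By definition the relatively minimal elliptic surface induced by the base change of $\cE_R$ by $\tau$ is the relatively minimal model of a resolution of $R \times_{\PP^1_0} \PP^1$; by the previous step these two relatively minimal Jacobian elliptic surfaces over $\PP^1$ are birational over $\PP^1$, hence isomorphic over $\PP^1$, since a relatively minimal elliptic surface over $\PP^1$ is determined up to isomorphism over $\PP^1$ by its generic fibre (see \cite{schutt-shioda}). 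This is the first assertion. For the second, the deck transformation of $\tau$ acts on $R \times_{\PP^1_0} \PP^1$ (trivially on $R$, by $\sigma_{\PP^1}$ on the $\PP^1$ factor) and descends to an order-$3$ automorphism of its relatively minimal model; since $\Phi$ is equivariant for $\tilde\sigma$ and this deck transformation — using $\tilde\pi \circ \tilde\sigma = \tilde\pi$ and $\tilde\cE \circ \tilde\sigma = \sigma_{\PP^1} \circ \tilde\cE$ — and $\tilde\sigma$ descends to $\sigma$ on $X$ (Proposition \ref{blow-up-fixed-pts}), the isomorphism just constructed carries the deck transformation to $\sigma$.

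I expect the real content to be the birational identification in the second step, i.e.\ the fact that the square relating $\tilde X,\tilde R,\PP^1,\PP^1_0$ is Cartesian over a dense open — equivalently, that the generic fibre of $\cE_X$ is genuinely the base change of the generic fibre of $\cE_R$, not merely isomorphic to it after an extension of scalars. This is exactly where the type $2$ hypothesis is used: by the proof of Proposition \ref{fixes_high_genus} every $\sigma$-fixed curve is a fibre component, so the three fibres in a general $\sigma_{\PP^1}$-orbit are disjoint and smooth, which forces $\tilde\pi$ to restrict to an isomorphism on general fibres and hence $\Phi$ to be birational. Alternatively one can argue directly on generic fibres: $\sigma$ makes the generic fibre $E_X$ of $\cE_X$ into Galois descent data for the cyclic degree-$3$ extension $k(\PP^1)/k(\PP^1_0)$, whose descent is the generic fibre $E_R$ of $\cE_R$, so $E_X \cong E_R \otimes_{k(\PP^1_0)} k(\PP^1)$ equivariantly, and the uniqueness of the Kodaira--Néron model then settles both claims simultaneously.
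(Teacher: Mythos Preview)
Your proof is correct and follows essentially the same route as the paper's: both arguments establish birationality of $\tilde X$ with the base change over a dense open $U\subset\PP^1$ where the fibres are smooth, invoke uniqueness of the relatively minimal model, and conclude that the induced automorphism agrees with $\sigma$ by density. The paper's version is terser---it simply asserts that ``the base change of $\cE_R$ by $\tau$ must be birational to $\tilde X$'' over such a $U$---whereas you make this explicit by constructing $\Phi=(\tilde\pi,\tilde\cE)$ and checking it is a fibrewise isomorphism there; your closing remark on Galois descent of the generic fibre is an equivalent reformulation of the same step.
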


\begin{proof}
    Let $U \subset \PP^1$ be an open set such that $\cE_{\tilde{R}}^{-1}(v)$ is a smooth fibre for every $v \in U$. Then, $\cE_{\tilde{R}}^{-1}(U)$ is isomorphic to $\cE_R^{-1}(U)$, and the base change of $\cE_R$ by $\tau$ must be birational to $\cE_{\tilde{X}} \colon \tilde{X} \to \PP^1$. After resolving singularities and contracting $(-1)$-curves on the fibres, by the uniqueness of the relatively minimal model of elliptic surfaces, we obtain the fibration $\cE_X \colon X \to \PP^1$. Furthermore, since the automorphism induced by this base change agrees with $\sigma$ on the dense open set $\cE_R^{-1}(U)$, they must agree everywhere.
\end{proof}

As a consequence of this, we can use Proposition \ref{basechange_is_k3} to prove the following.

\begin{prop}\label{fibres_type_2}
    Let $\cE \colon X \to \PP^1$ be a Jacobian elliptic fibration of type 2 in $(X,\sigma)$. Then, $\sigma$ preserves two fibres $F^X_a$ and $F^X_b$, and every other fibre is in an orbit $F^X_{v_1},F^X_{v_2},F^X_{v_3}$ of $\sigma$. Furthermore:
    \begin{itemize}
        \item[i)] $F^X_a$ is of type $I_0$ or $I_n^*$ for $n = 0,3,6,9,12$.
        \item [ii)] $F^X_b$ is of type $I_0^*,III^*$ or $I_m$ for $m = 0,3,6,9,12,15,18$. 
        \item[iii)] $F^X_{v_1}, F^X_{v_2}$ and $F^X_{v_3}$ have the same type, which can be $II, III, IV, IV^*, I_n^*$ for $n = 0,1$ or $I_m$ for $m = 0,1,\ldots, 6$.
        \item[iv)] There are no fibres of type $II^*$, $I_n^*$ for $n = 2,4,5,7,8,10,11,13$ or $I_n$ for $n = 7,8,10,11,13,14,16,17,19$. 
    \end{itemize}
\end{prop}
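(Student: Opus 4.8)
The plan is to combine Propositions~\ref{type2_induces}, \ref{basechange_R} and~\ref{basechange_is_k3} with the behaviour of Kodaira fibres under base change. By the first two, $\cE \colon X \to \PP^1$ is the relatively minimal model of the base change of a rational elliptic surface $\cE_R \colon R \to \PP^1$ along the cyclic triple cover $\tau \colon \PP^1 \to \PP^1$ induced by $\sigma_{\PP^1}$, which is branched over exactly two points $a, b$, and $\sigma$ is the automorphism it induces. Since $\sigma_{\PP^1}$ fixes precisely $a$ and $b$ and permutes the other points of $\PP^1$ in triples, the fibres $F^X_a, F^X_b$ over $a, b$ are the only $\sigma$-invariant ones, while over any other point $u$ the three fibres $F^X_{v_1}, F^X_{v_2}, F^X_{v_3}$ form a $\sigma$-orbit, each isomorphic to the fibre $F^R_u$ of $\cE_R$ over $u$ since the cover is étale there. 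This gives the first assertion and reduces everything to (a) the effect of a totally ramified degree-$3$ base change on a Kodaira fibre, and (b) a rank bound on the resulting configurations.

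From Miranda's Table~VI.4.1 (equivalently, from Tate's algorithm applied to the valuations of $c_4, c_6, \Delta$), the transform $T \rightsquigarrow T'$ of a Kodaira type under ramification index $3$ is
\begin{gather*}
I_0 \rightsquigarrow I_0, \quad I_k \rightsquigarrow I_{3k}, \quad IV \rightsquigarrow I_0, \quad IV^* \rightsquigarrow I_0, \quad II \rightsquigarrow I_0^*, \quad II^* \rightsquigarrow I_0^*, \\
III \rightsquigarrow III^*, \quad III^* \rightsquigarrow III, \quad I_0^* \rightsquigarrow I_0^*, \quad I_k^* \rightsquigarrow I_{3k}^* \quad (k \ge 1).
\end{gather*}
By Proposition~\ref{basechange_is_k3}, since $X$ is a K3, exactly one of $F^R_a, F^R_b$ has type in $\{IV\} \cup \{I_n^* : n \ge 0\}$ and the other has type in $\{I_m : m \ge 0\} \cup \{II, III\}$; relabel $a, b$ so that $F^R_a$ is of the first kind. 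Reading off the table, $F^X_a$ has type $I_0$ or $I_{3n}^*$ (some $n \ge 0$), $F^X_b$ has type $I_{3m}$ (some $m \ge 0$), $I_0^*$ or $III^*$, and the orbit fibres $F^X_{v_i}$ all have the Kodaira type of a single fibre $F^R_u$ of $\cE_R$ with $u \notin \{a,b\}$.

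It remains to bound the exponents. For $F^X_a$ and $F^X_b$ I would use that the $ADE$-type $T_v$ of any reducible fibre of an elliptic fibration on $X$ embeds into the frame lattice $W_\cE$, of rank $\rho(X) - 2 \le 18$: so $F^X_a = I_{3n}^*$ forces $\rank D_{3n+4} = 3n+4 \le 18$, giving $n \le 4$ and hence~(i), while $F^X_b = I_{3m}$ forces $\rank A_{3m-1} = 3m-1 \le 18$, giving $m \le 6$ and hence~(ii). For the orbit fibres, from $\sum_v e(F^R_v) = 12$ and $e(F^R_a) \ge 4$ (its type being $IV$ or $I_n^*$) we get $e(F^R_u) \le 8$, which already excludes types $II^*$, $III^*$, $I_n$ with $n \ge 9$ and $I_n^*$ with $n \ge 3$. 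The only types of Euler number $\le 8$ not listed in~(iii) are $I_7$, $I_8$ and $I_2^*$; for each of these the Euler bound forces $F^R_a$ to be of type $IV$ and all singular fibres of $R$ other than $F^R_u$ to be irreducible, so the $ADE$-type of $\cE_R$ is $A_2 \oplus A_6$, $A_2 \oplus A_7$ or $A_2 \oplus D_6$ according as $F^R_u$ has type $I_7$, $I_8$ or $I_2^*$. This is impossible: $A_2 \oplus A_7$ has rank $9$, exceeding the rank $8$ of $E_8(-1)$, while $A_2 \oplus A_6$ and $A_2 \oplus D_6$ have rank $8$ but non-square discriminant ($21$ and $12$) --- whereas the $ADE$-type of a rational elliptic surface, being a finite-index sublattice of the unimodular frame lattice $E_8(-1)$ once it has full rank $8$, must have square discriminant. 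This proves~(iii), and (iv) follows by taking the complement of the types listed in (i)--(iii) inside the set of Kodaira types that can occur on an elliptic K3 surface (bounded again via $\rank T_v \le 18$).

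The step I expect to be the main obstacle is this last refinement for~(iii): an Euler-number count alone does not exclude $I_7$, $I_8$ and $I_2^*$ as orbit fibres, and ruling them out relies on the structural fact that the $ADE$-type of a rational elliptic surface embeds with finite index in $E_8(-1)$, i.e.\ on a short appeal to the classification of rational elliptic surfaces. One should also note that the $I_n^*$-list in~(iv) ought to include $n = 14$: the type $I_{14}^*$ cannot occur as $F^X_a$ (this would force $F^R_a = I_n^*$ with $3n = 14$), nor as $F^X_b$, nor as an orbit fibre, since $e(I_{14}^*) = 20 > 8$.
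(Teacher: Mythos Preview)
Your argument is correct and in several places tighter than the paper's. For (i) and (ii), the paper bounds the fibres of $\cE_R$ by the Shioda--Tate formula on $R$ (at most $9$ components, hence $n \le 4$ for $I_n^*$ and $m \le 9$ for $I_m$), whereas you bound the fibres of $\cE$ directly via $\rank W_\cE \le 18$; your route is cleaner, and in fact the paper's stated bound $m \le 9$ on $R$ is not by itself sufficient to reach $3m \le 18$ for $F^X_b$ without an additional appeal to the contribution of $F_a$ or to Persson's list. For the last exclusions in (iii), the paper uses the component count (at most $7$ components for $F_u$, so only $I_7$ and $I_2^*$ remain) and then cites Persson's classification directly, while your Euler-number count leaves $I_7$, $I_8$, $I_2^*$ and you dispose of them by a self-contained rank/discriminant argument inside $E_8(-1)$; both work, and yours avoids invoking an external table. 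Two small remarks: your clause ``all singular fibres of $R$ other than $F^R_u$ to be irreducible'' should read ``other than $F^R_a$ and $F^R_u$'', since $F^R_a$ of type $IV$ is itself reducible (your subsequent $ADE$-type computation already treats it correctly); and your closing observation that $I_{14}^*$ is missing from the list in (iv) is a genuine correction to the statement as printed.
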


\begin{proof}
    By Proposition \ref{basechange_R}, $\cE$ is the base change of a rational elliptic surface $\cE_R \colon R \to \PP^1$ by a Galois covering $\tau \colon \PP^1 \to \PP^1$ of degree 3 ramified over $a,b \in \PP^1$. By Proposition \ref{basechange_is_k3}, we know $F_a$ is of type $IV$ or $I_n^*$ and $F_b$ is of type $II, III$ or $I_m$. Since $R$ is rational, we know by the Shioda--Tate formula (see \cite{shioda1989mordell}, Theorem 1.1) that fibres of $\cE_R$ have at most 9 components, if $F_u$ is of type $I_m$ or $I_n^*$, we know $m \leq 9$ and $n \leq 4$. 

    Let $F^X_a, F^X_b$ be the fibres of $\cE$ above $F_a, F_b$ respectively. Then, by \cite{miranda}, Table VI.4.1, we know $F^X_a$ is of type $I_0$ or $I_{n}^*$ for $n = 0,3,6,9,12$, and $F^X_b$ is of type $I_0^*,III^*$ or $I_m$, for $m = 0,3,6,9,12,15,18$. This proves (i) and (ii).
    
    Let $F_u$ be a fibre of $\cE_R$ for $u \neq a,b$. Since $u$ is not ramified by $\tau$, there are three distinct points $v_1,v_2,v_3 \in \PP^1$ such that $\tau(v_i) = u$, and $F^X_{v_i}$ has the same type as $F_u$. By \cite{Persson1990}, we know that any of the fibre types listed in (iii) are possible for $F_u$.

    It remain to prove that the types listed in (iv) are not possible for $F_u$. Firstly, notice that $F_a$ has at least 3 components, so by Shioda--Tate $F_u$ has at most 7 components. Therefore, it remains to check for $I_2^*$ and $I_7$. Since both have 7 components, we know $F_a$ is of type $IV$ and $F_b$ is irreducible. However, these configurations are impossible (see \cite{perssonlist}, Table 2.1, configurations 22 and 27).
\end{proof}

\begin{corollary}
    Assume that $\sigma$ acts trivially on $\NS X$. Then, every fibre other than $F^X_a$ and $F^X_b$ is irreducible. 
\end{corollary}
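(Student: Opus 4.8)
The plan is to combine the orbit structure of the fibres of $\cE$ established in Proposition \ref{fibres_type_2} with the rigidity of rational curves on a K3 surface, in the same spirit as the proof of Proposition \ref{type_1_rank_0}. First I would recall that we are in the setting of Proposition \ref{fibres_type_2}: $\cE$ is of type $2$, the only fibres preserved by $\sigma$ are $F^X_a$ and $F^X_b$, and every other fibre lies in a $\sigma$-orbit $\{F^X_{v_1},F^X_{v_2},F^X_{v_3}\}$ consisting of three \emph{distinct} fibres. Hence it suffices to show that a fibre lying in an orbit of size $3$ is irreducible.

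Next I would argue by contradiction. Suppose $F^X_{v_1}$ is reducible. By the Kodaira--Néron classification recalled in Section \ref{section_preliminaries}, every component of a reducible fibre of a relatively minimal elliptic fibration on a K3 surface is a smooth rational curve (a $(-2)$-curve, by adjunction, since $K_X=0$). Choose such a component $\Theta \subseteq F^X_{v_1}$. On a K3 surface a smooth rational curve is the unique effective divisor in its class; since $\sigma$ acts trivially on $\NS X$, this gives $\sigma(\Theta)=\Theta$, exactly as in the proof of Proposition \ref{type_1_rank_0}.

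Finally I would derive the contradiction from the disjointness of distinct fibres of an elliptic fibration: from $\Theta \subseteq F^X_{v_1}$ we get $\sigma(\Theta) \subseteq \sigma(F^X_{v_1}) = F^X_{v_2}$, so $\Theta = \sigma(\Theta) \subseteq F^X_{v_1} \cap F^X_{v_2} = \varnothing$, which is absurd. Therefore every fibre in a size-$3$ orbit is irreducible, and only $F^X_a$ and $F^X_b$ can be reducible.

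The argument is essentially immediate once Proposition \ref{fibres_type_2} is available, so I do not expect a genuine obstacle; the only point that needs a moment of care is to be sure that \emph{each} component of a reducible fibre is really a rational curve, so that the uniqueness-in-its-class argument applies — and this is precisely what the Kodaira--Néron classification guarantees.
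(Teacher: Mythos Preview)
Your proof is correct and follows essentially the same approach as the paper. The paper's argument is slightly more compressed: instead of invoking uniqueness of rational curves in their class to deduce $\sigma(\Theta)=\Theta$ and then reaching a contradiction via $F^X_{v_1}\cap F^X_{v_2}=\varnothing$, it observes directly that components of distinct fibres are linearly independent in $\NS X$, so sending a component of $F^X_{v_1}$ to one of $F^X_{v_2}$ already constitutes a non-trivial action on $\NS X$ --- but this is the same idea with one step skipped.
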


\begin{proof}
    Assume $F^X_v$ is not irreducible, for $v \neq a,b$. Then, $\sigma$ takes the components of $F^X_v$ to the components of another fibre. Since fibre components are independent in $\NS X$, this constitutes a non-trivial action.
\end{proof}

\subsection{Classification by induced linear systems}\label{Induced_systems}

Let $X$ be a K3 surface with a non-symplectic involution $\iota$. In work by Garbagnati and Salgado, the elliptic fibrations of $X$ are directly related to linear systems on the quotient $X/\iota$, which is shown to be a rational elliptic surface with the assumption that $\iota$ fixes curves of genus at most 1 (see \cite{GS1}, \cite{GS2}). 
Our goal is to study the linear systems induced by Jacobian elliptic fibrations on the resolution $\tilde{R}$ of the quotient $X/\sigma$ (see \ref{blow-up-fixed-pts}). In order to do this, we work with the following assumption through the rest of this section.

\begin{assumption}\label{assump}
    Let $X$ be a K3 surface and $\sigma \in \Aut(X)$ a non-symplectic automorphism of order 3. We assume that $X$ admits a Jacobian elliptic fibration $\cE_X \colon X \to \PP^1$ of type 2 with relation to $\sigma$.
\end{assumption}

Let $(X, \sigma, \cE_X)$ be a K3 surface with Assumption \ref{assump}. Let $\cE$ be an elliptic fibration on $X$ (possibly distinct from $\cE_X$). Then $\cE$ induces a pencil of curves $\Lambda$ on $\tilde{R}$ by pulling back $|F|$ by $\eta_X$, and then applying the pushforward by $\tilde{\pi}$. The following theorem describes the relation between the type of an elliptic fibrations in \ref{classification} and which kind of pencil it induces on $\tilde{R}$ (see section \ref{res_linear_systems}).

\begin{thm} \label{class_linearsys}
    The induced pencil $\Lambda$ is determined by the type of $\cE$.
    \begin{itemize}
        \item[i)] $\cE$ is of type 1 if and only if $\Lambda$ is a conic bundle class of $\tilde{R}$.
        \item[ii)] $\cE$ is of type 2 if and only if $\Lambda$ is a splitting genus 1 pencil of $\tilde{R}$.
        \item[iii)] $\cE$ is of type 3 if and only if $\Lambda$ is a non-complete linear system.
    \end{itemize}
\end{thm}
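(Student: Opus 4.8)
The plan is to analyze the induced pencil $\Lambda = \tilde\pi_*\,\eta_X^*|F|$ by tracking two invariants: the intersection number of a general member with $-K_{\tilde R}$, and whether $\Lambda$ is base-point free (equivalently, a genuine morphism). I would first fix notation: write $\tilde F = \eta_X^* F$ on $\tilde X$, let $D$ be a general member of $\Lambda$ on $\tilde R$, and recall from Proposition~\ref{blow-up-fixed-pts} that $\tilde\pi\colon \tilde X \to \tilde R$ is the quotient by $\tilde\sigma$, a degree $3$ map branched over the curves fixed by $\tilde\sigma$. The key computational input is the projection formula: $\tilde\pi_*\tilde\pi^* = 3\cdot\mathrm{id}$ and $\tilde\pi^*K_{\tilde R} = K_{\tilde X} - \mathrm{(ramification)}$. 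Since $\tilde X$ is obtained from the K3 surface $X$ by blowing up the isolated fixed points, $K_{\tilde X}$ is effective supported on those exceptional curves; and because $\cE_X$ is of type~2 these exceptional curves are contracted or behave controllably, so I expect $\tilde F \cdot K_{\tilde X} = 0$, giving $D \cdot K_{\tilde R}$ a definite value depending only on how the branch locus meets a general fibre.

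Next I would handle the three cases. For the forward direction, suppose $\cE$ is of type~1. Then by Proposition~\ref{condition_type_1} every smooth fibre carries an order-$3$ automorphism, and $\sigma$ fixes a multisection structure; more relevantly, $\tilde\pi$ restricted to a general $\tilde F$ is a degree-$3$ map $\tilde F \to D$ branched at the points where the fixed curves meet $\tilde F$. Since a type-1 fibration has $\sigma$ fixing each fibre, the fixed curves are multisections, so by Riemann--Hurwitz applied to $E \to E/\langle\sigma|_E\rangle = \PP^1$ (an order-3 quotient of an elliptic curve has genus $0$), $D \cong \PP^1$ and $D^2 = 0$; combined with $D\cdot(-K_{\tilde R}) = 2$ (from the Riemann--Hurwitz count of branch points, which must be $4$ downstairs... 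I would compute this carefully from the $\tfrac13(1,1)$ resolution and the branch curve $C$ of genus $g$), this gives exactly Definition~\ref{cb_class}, so $\Lambda$ is a conic bundle class. For type~2, Proposition~\ref{type2_induces} already shows $|D|$ induces a Jacobian elliptic fibration on $\tilde R$, so $D$ is a genus~1 curve with $D\cdot K_{\tilde R} = 0$ (adjunction on the rational elliptic surface $R$, pulled back), which is precisely a splitting genus~1 pencil. For type~3, $\sigma$ does not fix $[F]$, so $[F]$ and $[\sigma^* F]$ are distinct classes summing under pushforward in a way that forces $\Lambda$ to acquire base points where $F$ meets $\sigma F$; I would argue $\tilde\pi^*D = \tilde F + \sigma^*\tilde F + (\sigma^2)^*\tilde F$ has positive self-intersection, so $D^2 > 0$ on $\tilde R$, whence $|D|$ is not a pencil of the expected type and $\Lambda$ is non-complete (its members move in a larger linear system, or it has base points).

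For the converses, I would note that the three target conditions (conic bundle class; splitting genus~1 pencil; non-complete linear system) are mutually exclusive --- a conic bundle class has general member of genus $0$, a splitting genus~1 pencil of genus $1$, and the non-complete case is distinguished by $D^2 > 0$ or by the presence of base points --- and every elliptic fibration $\cE$ on $X$ is of exactly one of the three types (by the Proposition following Definition~\ref{classification}). Since the forward implications give a well-defined function from types to pencil-kinds hitting all three cases, it is a bijection, so each converse follows formally. The main obstacle I anticipate is the precise bookkeeping in the type-1 case: pinning down that $D\cdot(-K_{\tilde R}) = 2$ requires correctly accounting for the branch locus of $\tilde\pi$ on a general fibre --- the fixed curve $C$ of genus $g$, the rational fixed curves $C_i$, and the $(-3)$-curves coming from blown-up isolated points --- and verifying that exactly the right number contribute; I would lean on the classification of fixed loci (Theorem~\ref{fixed_locus}) and the local model $A_{3,1}$ together with Riemann--Hurwitz on $\tilde F \to D$ to make this count rigorous. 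The type-3 direction also needs care to show the induced object is genuinely non-complete rather than merely a different complete system; I expect the cleanest route is to show $\eta_X^* F$ is not $\tilde\sigma$-invariant and hence its pushforward's linear system strictly contains $\Lambda$.
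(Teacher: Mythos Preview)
Your overall structure matches the paper's --- handle each type separately and deduce the converses from mutual exclusivity --- and your type~2 argument is essentially identical. But the type~1 case has a genuine gap: you assert that ``an order-3 quotient of an elliptic curve has genus~0,'' which fails when $\sigma$ restricts to translation by a 3-torsion point on the fibre, giving an unramified genus-1 quotient. The paper rules this out explicitly: if $g(D_v)=1$ for general $v$, then $\sigma$ acts by translation on almost every fibre and trivially on the base, hence preserves the period $\omega_X$, contradicting non-symplecticity. Proposition~\ref{condition_type_1} does not supply this step, since its proof tacitly assumes $\sigma$ fixes the origin on each fibre. Separately, your plan to extract $D\cdot(-K_{\tilde R})=2$ from the ramification formula for $K_{\tilde R}$ and branch-locus bookkeeping is far harder than necessary (and your ``4 branch points downstairs'' is wrong for degree~3; Riemann--Hurwitz gives exactly three totally ramified points). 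The paper bypasses all of this: once $g(D_v)=0$ is known, compute $D_v^2 = \tilde\pi^*D_v\cdot \tilde F_v = 3\tilde F_v^2 = 0$ via the projection formula, and then adjunction on $\tilde R$ immediately gives $D_v\cdot K_{\tilde R}=-2$. No canonical-bundle or fixed-locus accounting is needed.

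For type~3 your sketch is right in spirit, but ``$D^2>0$'' alone does not show $\Lambda$ is non-complete; you need a dimension count. The paper carries this out on $\tilde X$ before pushing forward: with $m = \tilde F\cdot\sigma^*\tilde F + \tilde F\cdot(\sigma^2)^*\tilde F + \sigma^*\tilde F\cdot(\sigma^2)^*\tilde F > 0$, the class $\tilde F + \sigma^*\tilde F + (\sigma^2)^*\tilde F$ has self-intersection $2m$ and is base-point free (each summand is an elliptic fibre class), so by Saint-Donat its complete linear system has dimension $m+1>1$ with smooth general member, whereas the one-parameter family $\{\tilde F_v + \sigma^*\tilde F_v + (\sigma^2)^*\tilde F_v\}_{v\in\PP^1}$ has reducible general member and is therefore a proper, non-complete subsystem; its pushforward $\Lambda$ inherits this.
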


The proof of this theorem is a direct adaptation of Theorem 4.2 in \cite{GS1}, in which the automorphism $\sigma$ is a non-symplectic involution. The biggest change is the necessity of pulling back the linear system to $\tilde{X}$ before taking the quotient.

\begin{proof}
    Let $[\tilde{F}]$ be the class of a fibre of the elliptic fibration induced in $\tilde{X}$ by $\cE$ and $\tilde{F}_v$ the pullback of $F_v$. 
    
    Suppose that $\cE$ is of type 1. Then the action of $\sigma$ on each $F_v$ lifts to an action of $\tilde{\sigma}$ on $\tilde{F}_v$. The pencil $\Lambda$ is given by the system of curves $\{D_v\}_{v \in \PP^1}$, where $D_v = \tilde{\pi}(\tilde{F_v}) = \tilde{F}_v/\tilde{\sigma}$. Since $\sigma$ has a finite number of isolated fixed points, for all but finitely many $v \in \PP^1$, $F_v$ is smooth and $D_v = F_v/\sigma$. Applying the Riemann-Hurwitz Theorem to the quotient map $F_v \to D_v$, we know that $g(D_v) = 0$ if and only if the map ramifies in two distinct point with index 3, and  $g(D_v) = 1$ if and only it is unramified. If we assume $g(D_v) = 1$, then $\sigma$ acts as the translation of a torsion point of $F_v$ as an elliptic curve, fixing its period. Furthermore, since this is true for all but finitely many $v \in \PP^1$ and $\sigma$ acts as the identity on the base of $\cE$, then $\sigma$ must preserve the period of $X$. That is not possible due to the assumption that $\sigma$ is non-symplectic, so $g(D_v) = 0$. We can calculate the self intersection as $D_v^2 = D_v \cdot \tilde{\pi}(\tilde{F}_v) = \tilde{\pi}^*(D_v) \cdot \tilde{F}_v = 3\tilde{F}_v \cdot \tilde{F}_v = 0$. By the adjunction formula, $D_v \cdot K_{\tilde{R}} = -2$. We conclude that $\Lambda = |D_v|$ is a generalized conic bundle of $R$ (with respect to $\eta_R \colon \tilde{R} \to R$).
    
    Suppose $\cE$ is of type 2. By \ref{type2_induces}, we know that $\Lambda$ consists of the system of fibres $\{D_v\}_{v\in \PP^1}$ in a Jacobian elliptic fibration. Consequently, $D_v^2 = 0$ and $g(D_v) = 1$, and by the adjunction formula, $D_v \cdot K_{\tilde{R}} = 0$. Therefore, $\Lambda$ is a splitting genus 1 pencil.

    Suppose $\cE$ is of type 3. Then $\sigma([F]) = [F']$ and $\sigma([F']) = [F'']$, for $[F], [F'], [F'']$ three distinct classes on $\NS X$, each respectively inducing distinct elliptic fibrations $\cE$, $\cE'$, $\cE''$. Pulling back these classes by $\eta_X$, we obtain $[\tilde{F}], [\tilde{F}'], [\tilde{F}'']$ distinct classes in $\NS \tilde{X}$. Since they are supported on smooth curves of $\tilde{X}$, the intersection products $\tilde{F}\tilde{F}'$, $\tilde{F}\tilde{F}''$ and $\tilde{F}'\tilde{F}''$ are all greater than 0. Let $\tilde{F}\tilde{F}' + \tilde{F}\tilde{F}'' + \tilde{F}'\tilde{F}'' = m > 0$, then $(\tilde{F}+\tilde{F}'+\tilde{F}'')^2 = 2m$. Since $\tilde{F}^2 = \tilde{F}'^2 = \tilde{F}''^2 = 0$, the linear system $|\tilde{F}+\tilde{F}'+\tilde{F}''|$ is base point free. In particular, there is a smooth curve $C_X$ of genus $m+1$ whose class is $[\tilde{F} + \tilde{F}' + \tilde{F}'']$. As a consequence, $|C_X| = |\tilde{F}+\tilde{F}'+\tilde{F}''|$ is an $m+1$ dimensional linear system with smooth general elements (see \cite{SaintDonat}, Proposition 2.6). On the other hand, the family of curves $\tilde{F}_v + \tilde{F}'_v + \tilde{F}''_v$, given by $\eta_X^{-1}(\cE^{-1}(v) + \cE'^{-1}(v) + \cE''^{-1}(v))$ for each $v \in \PP^1$, has dimension 1 and reducible general element. Taking $D_v = \tilde{\pi}(\tilde{F}_v) = \tilde{\pi}(\tilde{F}'_v) = \tilde{\pi}(\tilde{F}''_v)$, we conclude $\Lambda = \{ D_v \}_{v \in \PP^1}$ is a non-complete sub-linear system of $|\tilde{\pi}(C_X)|$.
\end{proof}

\subsection{Equations for elliptic fibrations of type 1 and 2} \label{weierstrass_section}

Let $\cE_X$ be a Jacobian fibration of type 2 in $(X,\sigma)$. By Proposition \ref{basechange_R}, $\cE_X$ is the base change of a rational elliptic fibration $\cE_R \colon R \to \PP^2$ by a map $\tau \colon \PP^1 \to \PP^1$. The fibration $\cE_R$ is constructed as a resolution $\eta$ of a rational map $\varphi \colon \PP^2 \dashrightarrow \PP^1$ given by $[x{:}y{:}z] \mapsto [\cF(x,y,z) : \cG(x,y,z)]$, and after a change of coordinates, we can assume $\tau$ is given by $[s{:}t] \mapsto [s^3{:}t^3]$. Thus, the generic fibre of $\cE_X$ can be written as

\begin{align*}
    \cE_X \colon \cF(x,y,z) + t^3 \cG(x,y,z) = 0.
\end{align*}

Now, let $\cE \colon X \to \PP^1$ be an elliptic fibration distinct from $\cE_X$, and $\Lambda$ the induced linear system in $\tilde{R}$. Through the contractions $\eta_{\tilde{R}} \colon \tilde{R} \to R$ and $\eta \colon R \to \PP^2$, $\Lambda$ induces a pencil of curves $\Gamma$ in $\PP^2$. In this section, we show how to use $\Gamma$ to deduce an equation for the generic fibre of $\cE$, when $\cE$ is of type 1 or 2 in relation to $\sigma$.

Let $\cE \colon X \to \PP^1$ be an elliptic fibration of type 2 in $(X,\sigma)$ distinct from $\cE_X$. By Proposition \ref{basechange_R}, we know that $\cE$ is obtained with the base change of a rational elliptic surface by a cubic Galois covering $\tau_{\cE} \colon \PP^1 \to \PP^1$.

\begin{prop}\label{equation_type2}
    Let $F_a$, $F_b$ be the fibres above the ramification points of $\tau_{\cE}$, and $\cC_a$, $\cC_b$ the induced curves in $\Gamma$. Then, we can write the generic fibre of $\cE$ by the following equation
    \begin{align*}
        \cE \colon \cC_a(x,y,z) + t^3 \cC_b(x,y,z) = 0.
    \end{align*}
\end{prop}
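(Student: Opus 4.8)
The plan is to reduce the statement to Proposition \ref{basechange_R} combined with the explicit description of $\cE_X$ already established at the start of this subsection. By Proposition \ref{basechange_R} applied to $\cE$ (which is of type 2 by hypothesis), the fibration $\cE$ is the base change of a rational elliptic surface $\cE_S \colon S \to \PP^1$ along a cubic Galois covering $\tau_\cE \colon \PP^1 \to \PP^1$ ramified over two points $a, b \in \PP^1$, with $F_a, F_b$ the corresponding fibres of $\cE_S$. After an automorphism of the base we may assume $\tau_\cE$ is $[s{:}t] \mapsto [s^3{:}t^3]$, so exactly as for $\cE_X$ the generic fibre of $\cE$ can be written as $\cA(x,y,z) + t^3 \cB(x,y,z) = 0$, where $\cE_S$ is realized as the resolution of the rational map $[x{:}y{:}z] \mapsto [\cA(x,y,z) : \cB(x,y,z)]$ on $\PP^2$ (using Proposition \ref{RES} and the fact that every rational elliptic surface arises this way over an algebraically closed field). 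Here $\cA$ and $\cB$ are the cubics whose vanishing loci are the fibres $F_a$ and $F_b$ of $\cE_S$, pulled back to $\PP^2$.

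The remaining point is to identify $\cA$ and $\cB$ with $\cC_a$ and $\cC_b$, the curves of the pencil $\Gamma$ in $\PP^2$ corresponding to the ramification fibres. This is essentially a bookkeeping matter about the birational maps in play. The pencil $\Lambda$ on $\tilde R$ induced by $\cE$ is, by the type 2 case of Theorem \ref{class_linearsys}, the splitting genus 1 pencil whose members $D_v$ are the fibres of the induced Jacobian elliptic fibration $\cE_{\tilde R} \colon \tilde R \to \PP^1$; contracting via $\eta_{\tilde R} \colon \tilde R \to S$ and then $\eta \colon S \to \PP^2$ carries this to the pencil $\Gamma$ of cubics that is precisely the pencil defining $\cE_S$, i.e. the pencil $[\cA : \cB]$. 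Thus the member of $\Gamma$ lying over the point $a \in \PP^1$ is the cubic $\cC_a$ whose proper transform on $S$ is $F_a$, and likewise $\cC_b$ over $b$; so $\cA = \cC_a$ and $\cB = \cC_b$ up to scalars, and absorbing the scalars into the coordinates $s,t$ gives the stated equation $\cC_a(x,y,z) + t^3 \cC_b(x,y,z) = 0$.

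I would organize the write-up in three short steps: (1) invoke Proposition \ref{basechange_R} to get $\cE$ as a degree-3 base change of some rational elliptic surface $\cE_S$ and normalize $\tau_\cE$ to $[s{:}t]\mapsto[s^3{:}t^3]$; (2) realize $\cE_S$ via Proposition \ref{RES} as the resolution of a pencil of cubics $[\cA:\cB]$ on $\PP^2$, so that the base-changed generic fibre is $\cA + t^3 \cB = 0$; (3) trace through $\tilde R \xrightarrow{\eta_{\tilde R}} S \xrightarrow{\eta} \PP^2$ to see that $\Gamma$ is exactly this pencil of cubics and that the ramification fibres $F_a, F_b$ correspond to $\cC_a = \cA$ and $\cC_b = \cB$. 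The main obstacle is step (3): one must check that the pencil $\Gamma$ obtained by pushing $\Lambda$ all the way down to $\PP^2$ genuinely coincides with the defining pencil of $\cE_S$ and not merely with a pencil projectively equivalent to it in a way that scrambles the identification of the two ramification members. This follows because $\eta_{\tilde R}$ and $\eta$ are the very contractions relating $\tilde R$, $S$ and $\PP^2$ that were used to build $\cE_S$ as a resolution, so the pencil of fibres of $\cE_{\tilde R}$ pushes forward to the pencil of fibres of $\cE_S$ and then to the pencil of cubics defining it; the two distinguished members are then intrinsically the images of $F_a$ and $F_b$, which is the definition of $\cC_a$ and $\cC_b$.
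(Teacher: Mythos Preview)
Your steps (1) and (2) are fine, and the overall strategy---reduce to a rational elliptic pencil, then base change by $t\mapsto t^3$---is exactly the paper's. The gap is in step (3): you have misidentified the pencil $\Gamma$.

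By definition (see the paragraph just before the proposition), $\Gamma$ is obtained by pushing $\Lambda$ through the \emph{fixed} contractions $\eta_{\tilde R}\colon \tilde R\to R$ and $\eta\colon R\to\PP^2$, where $R$ is the relatively minimal model of $\tilde R$ with respect to the fibration coming from $\cE_X$, not from $\cE$. You instead route everything through a new surface $S$, the minimal model for $\cE$, and a new contraction $S\to\PP^2$. These are different birational maps to $\PP^2$, and they produce different plane pencils: via your route one does land on a pencil of cubics $[\cA:\cB]$, but via the paper's route one lands on $\Gamma$, which is in general \emph{not} a pencil of cubics. The examples in Section~\ref{section_pencils} make this concrete: for the type~2 fibrations $\cE_2,\cE_3,\cE_6$ the pencils $\Gamma_2,\Gamma_3,\Gamma_6$ consist of sextics, quartics, and quartics respectively. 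So your identification $\cA=\cC_a$, $\cB=\cC_b$ fails, and your justification (``$\eta_{\tilde R}$ and $\eta$ are the very contractions \ldots\ used to build $\cE_S$'') is simply false for the maps in the statement.

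The paper's argument avoids this detour entirely. It does not introduce $S$ or realize anything as a cubic pencil; it only uses that, by Theorem~\ref{class_linearsys}, $\Lambda$ is a splitting genus~1 pencil on $\tilde R$, so its pushforward $\Gamma$ is a genus~1 pencil in $\PP^2$ (of whatever degree), the generic fibre of the induced fibration on $\tilde R$ is isomorphic to the plane curve $\cC_a+t\,\cC_b=0$, and then one applies the normalised base change $t\mapsto t^3$. Your argument can be salvaged by dropping $S$ and arguing directly with $\Gamma$ in this way.
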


\begin{proof}
    By Theorem \ref{class_linearsys}, we know that $\Lambda$ is a genus 1 pencil inducing a Jacobian elliptic fibration $\cE' \colon \tilde{R} \to \PP^1$. Then, $\Gamma$ must be a pencil of genus 1 curves in $\PP^2$ generated by $\cC_a$ and $\cC_b$. For all but finitely many $t \in \PP^1$, the fibre $(\cE')^{-1}(t)$ is isomorphic to $\cC_a(x,y,z) + t \cC_b(x,y,z) = 0.$
    By a change of coordinates, we can suppose that $\tau_{\cE}$ is given by the map $t \mapsto t^3$. Thus, applying the base change by $\tau_{\cE}$, we obtain the wanted equation for the generic fibre of $\cE$.
\end{proof}

\begin{remark}\label{remark_ADE}
    In order to use this proposition, we need to know what are the fibres of $\cE \colon X \to \PP^1$ above the ramification locus of $\tau_{\cE}$. We can deduce this from the $ADE$-type of $\cE$. By Proposition \ref{fibres_type_2}, every reducible fibre which is the only one of its Kodaira type must be ramified by the base change, otherwhise it would have 3 copies. For instance, if the $ADE$-type of $\cE$ is $D_7 \oplus E_7$, then the only reducible fibres are of type $I_3^*$ and $III^*$, and both need to be ramified by $\tau_{\cE}$.
\end{remark}

Now, suppose $\cE \colon X \to \PP^1$ is an elliptic fibration of type 1 in $(X,\sigma)$. Then, $\Lambda$ is a pencil of rational curves in  $\tilde{R}$.

\begin{prop}\label{equation_type1}
    Let the restriction of $\cE_R \colon \tilde{R} \to \PP^1$ to $D_v$ be given by the map
    \begin{align*}
        f_v \colon & D_v \to \PP^1 \\
        & P \mapsto [x_v(P):y_v(P)].
    \end{align*}
    Then, we can write the generic fibre of $\cE \colon X \to \PP^1$ as
    \begin{align*}
        \cE \colon s^3 x_v = t^3 y_v.
    \end{align*}
\end{prop}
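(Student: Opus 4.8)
The plan is to realise the generic fibre of $\cE$ as the cyclic triple cover of the generic fibre $D_v$ of the conic bundle $\Lambda$, and then to make this cover explicit using the presentation of $\cE_X$ furnished by Proposition~\ref{basechange_R}. First I would set up the covering. Since $\cE$ is of type 1, Theorem~\ref{class_linearsys}(i) gives that $\Lambda = \{D_v\}_{v \in \PP^1}$ is a (generalised) conic bundle on $\tilde R$; write $g \colon \tilde R \to \PP^1$ for the associated fibration. As $\cE$ is of type 1, $\sigma$---and hence $\tilde\sigma$---acts as the identity on the base of the lifted fibration $\tilde\cE$ of Proposition~\ref{blow-up-fixed-pts}(i), so the base of $g$ is canonically the base of $\cE$ and $g \circ \tilde\pi = \tilde\cE$. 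Over the generic point $v$, the map $\tilde\pi$ therefore restricts to a degree-$3$ morphism $\tilde F_v \to D_v = \tilde F_v/\tilde\sigma$, where $\tilde F_v$ is the generic fibre of $\tilde\cE$ and hence, again by Proposition~\ref{blow-up-fixed-pts}(i), a model of the generic fibre of $\cE$; and since $\tilde\pi(O)$ meets $D_v$ in a single point for the zero-section $O$ of $\cE$, the conic $D_v$ has a rational point over $K := k(v)$, i.e.\ $D_v \cong \PP^1_K$.

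Next I would identify this cover explicitly. By Proposition~\ref{basechange_R}, $\cE_X$ is the base change of $\cE_R \colon R \to \PP^1$ along $\tau$, and in the normalisation in which $\cE_X$ reads $\cF + t^3\cG = 0$ the covering $\tau$ is $[s:t] \mapsto [s^3:t^3]$; hence $\tilde\pi \colon \tilde X \to \tilde R$ is the cyclic triple cover obtained by adjoining $\lambda^{1/3}$, where $\lambda$ is the rational function on $\tilde R$ pulled back from the corresponding affine coordinate on the base of $\cE_R$, with $\tilde\sigma$ acting by $\lambda^{1/3} \mapsto \zeta_3\,\lambda^{1/3}$. By the very definition of $f_v = \cE_R|_{D_v}$ one has $\lambda|_{D_v} = x_v/y_v$ in $K(D_v)$, so $\tilde F_v \to D_v$ is cut out by $w^3 = x_v/y_v$ with $w = \lambda^{1/3}$; homogenising the cover coordinate as $w = t/s$---an ambiguity only up to rescaling, which we absorb into the choice of coordinate on the base of $\cE_R$---this becomes $s^3 x_v = t^3 y_v$. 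Together with the first paragraph, this exhibits that curve---sitting in $D_v \times \PP^1_{[s:t]}$, or in $\PP^2 \times \PP^1_{[s:t]}$ after pushing $D_v$ forward by the contractions $\eta_{\tilde R}$ and $\eta$---as a model of the generic fibre of $\cE$. A few routine points remain: $\tilde\pi^{-1}(D_v) = \tilde F_v$ for generic $v$ (immediate from $\deg(\tilde F_v \to D_v) = 3 = \deg \tilde\pi$); $f_v$ is nonconstant, so $x_v, y_v$ genuinely define the cover (otherwise $D_v$ would lie in a fibre of $\cE_R$, which would force $\Lambda$ and $\cE_{\tilde R}$ to share their fibres---impossible, since the former has rational and the latter genus-$1$ fibres); and the cyclic triple cover of $D_v \cong \PP^1$ branched along $D_v \cap (\text{branch locus of } \tilde\pi)$ has arithmetic genus $1$, by Riemann--Hurwitz exactly as in the proof of Theorem~\ref{class_linearsys}.

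I expect the main obstacle to be the bookkeeping in the second step: one must reconcile the two fibration structures present on $\tilde X$---the type-$1$ fibration $\tilde\cE$ and the type-$2$ fibration underlying the $\cF,\cG$ presentation of $\cE_X$---and check that the covering automorphism $\tilde\sigma$ is precisely ``$\lambda^{1/3} \mapsto \zeta_3\lambda^{1/3}$'' with respect to the base coordinate of $\cE_R$, all while keeping track of the single coordinate normalisation already spent on $\tau$. Once these identifications are pinned down, the equation $s^3 x_v = t^3 y_v$ follows formally.
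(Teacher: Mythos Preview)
Your proposal is correct and follows essentially the same idea as the paper: identify the generic fibre $F_v$ of $\cE$ with the fibre product $D_v \times_{\PP^1,\,f_v,\,\tau} \PP^1$, from which the equation $s^3 x_v = t^3 y_v$ is immediate. The paper does this in one stroke via the commutative square
\[
\begin{tikzcd}
D_v \arrow[d, swap, "f_v"] & F_v \arrow[l, "\tilde\pi"] \arrow[d] \\
\PP^1 & \PP^1 \arrow[l, "\tau"]
\end{tikzcd}
\]
invoking the universal property of the fibre product and smoothness of $F_v$ to conclude that $F_v \cong D_v \times_{\PP^1} \PP^1$. You arrive at the same identification by a slightly longer route, arguing that $\tilde\pi$ is globally the Kummer cover obtained by adjoining $\lambda^{1/3}$ (with $\lambda$ the base coordinate of $\cE_R$), and then restricting to $D_v$ where $\lambda$ becomes $x_v/y_v$. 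Both arguments are the same construction in different language; the paper's fibre-product phrasing is more direct and spares you the bookkeeping about normalising $\tau$ and tracking $\tilde\sigma$ that you flagged as the main obstacle.
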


\begin{proof}
    Let $F_v$ be a smooth fibre of $\cE$ such that $F_v$ is isomorphic to $\tilde{F}_v = \eta_X^{-1}(F_v)$. Notice that the following diagram commutes

    \begin{center}
        \begin{tikzcd}
            D_v \arrow[d, swap, "f_v"] & F_v \arrow[d,"\cE"] \arrow[l, "\tilde{\pi}"] \\
            \PP^1 & \PP^1. \arrow[l, "\tau"] 
        \end{tikzcd}
    \end{center}
    
    \sloppy
    By the universal property of fibre products, there is a 1-1 map $F_v \to {D_v \times_{\PP^1} \PP^1}$. Since $F_v$ is smooth, this must be an isomorphism. Then, we can write $F_v$ in coordinates $(P,[s:t])$, where $P$ is a point in $D_v$ and $[s:t]$ on $\PP^1$ such that $f_v(P) = \tau([s{:}t])$, that is, $[x_v(P){:}y_v(P)] = [s^3{:}t^3]$. This gives rise to the proposed equation for the generic fibre.
\end{proof}

\subsection{Conic bundles inducing elliptic fibrations}

By Theorem \ref{class_linearsys}, a Jacobian elliptic fibration of type 1 in $(X,\sigma)$ induces a conic bundle in $\tilde{R}$. On the other hand, assume $\cE_X \colon X \to \PP^1$ is obtained as the base change of a relatively minimal rational elliptic surface $\cE_R \colon R \to \PP^1$ by a cubic covering $\tau \colon \PP^1 \to \PP^1$ ramified at $a,b \in \PP^1$. Then, any conic bundle $\cB \colon R \to \PP^1$ induces a fibration $\cB_X \colon X \to \PP^1$. However, in general, $\cB_X$ is not an elliptic fibration. In fact, we can check that in general the fibres of $\cB_X$ have genus 2.

If $B_v$ is a smooth fibre of $\cB_X$, and $D_v$ is the corresponding fibre of $\cB$, then $B_v$ is isomorphic to the base change of $f_v \colon D_v \to \PP^1$ (given by the restriction of $\cE_R$ to $D_v$) by $\tau \colon \PP^1 \to \PP^1$. We can calculate the ramification of the map $B_v \to D_v$ in order to find the genus $g(B_v)$. By the definition of conic bundle class, we know that $D_v \cdot F = 2$, where $F$ is the class of fibres of $\cE_R$. Then, $f_v$ has degree 2, and ramifies at two distinct points $c,d \in \PP^1$. Assume that $a,b,c,d$ are all distinct. Then, there are distinct points $a_1,a_2,b_1,b_2 \in D_v$ such that $f_v(a_i) = a, f_v(b_i) = b$ for $i = 1,2$. Furthermore, let $a_0, b_0$ be points of $\PP^1$ such that $\tau(a_0) = a$ and $\tau(b_0) = b$. Then, $a_1, a_2, b_1, b_2$, are the ramification point of $B_v \to D_v$, each having a single point in its pre-image, given by $(a_1,a_0),(a_2,a_0),(b_1,b_0),(b_2,b_0)$ respectively. Using the Riemann-Hurwitz theorem, we can calculate that $g(D_v) = 2$. As a consequence, in general, a conic bundle in $R$ does not induce an elliptic fibration in $X$.

\begin{remark}
    In the case of quadratic base changes, every conic bundle in $R$ induces a Jacobian elliptic fibration in $X$. 
\end{remark}

\begin{prop}
    Let $\cB \colon R \to \PP^1$ be a conic bundle in $R$, and let $a,b$ be the points in $\PP^1$ where $\tau \colon \PP^1 \to \PP^1$ ramifies. Then, $\cB$ induces a Jacobian elliptic fibration in $X$ if and only if the map $f_v \colon C_v \to \PP^1$ given by the restriction of $\cE_R$ to the fibre $D_v := \cB^{-1}(v)$ ramifies in either $a$ or $b$ for every $v \in \PP^1$.
\end{prop}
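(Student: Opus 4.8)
The plan is to reduce the statement to a Riemann--Hurwitz computation for the general fibre of $\cB_X$, continuing the analysis of the paragraph preceding the statement but keeping track of how the branch locus of $f_v$ meets $\{a,b\}$: the point is that $\cB_X$ is an elliptic fibration exactly when this general fibre has genus $1$. First I would fix a general $v\in\PP^1$, so that $D_v\cong\PP^1$ and $f_v\colon D_v\to\PP^1$ is a degree $2$ map (since $D_v\cdot F=2$, where $F=-K_R$ is the fibre class of $\cE_R$) with two distinct branch points $c_v,d_v$. The fibre $B_v:=\cB_X^{-1}(v)$ is the normalization of the base change $D_v\times_{\PP^1}\PP^1$ of $f_v$ along $\tau$, and the induced map $p_v\colon B_v\to D_v$ has degree $3$. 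Since $\tau$ is a cyclic degree $3$ cover, totally ramified over $a$ and $b$ and étale elsewhere, $p_v$ is étale over every $P\in D_v$ with $f_v(P)\notin\{a,b\}$; in particular the branch points of $f_v$ different from $a$ and $b$ contribute no ramification of $p_v$.

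Over a point $P\in D_v$ with $f_v(P)\in\{a,b\}$ I would read off the local picture: if $f_v$ is unramified at $P$ the base change is $\{u=z^3\}$, a single point of ramification index $3$; if $f_v(P)$ is a branch point of $f_v$ (so $P$ is its unique preimage and $f_v$ is ramified there) the base change is the cuspidal curve $\{u^2=z^3\}$, whose normalization is again a single point of ramification index $3$ over $D_v$. Hence each of the $k:=\#\{P\in D_v:f_v(P)\in\{a,b\}\}$ such points contributes $3-1=2$ to the ramification, and Riemann--Hurwitz gives $2g(B_v)-2=3(-2)+2k$, i.e. $g(B_v)=k-2$. Now $k=4$ if neither $a$ nor $b$ is a branch point of $f_v$ (giving $g(B_v)=2$), $k=3$ if exactly one of them is (giving $g(B_v)=1$), and $k=2$ if $\{c_v,d_v\}=\{a,b\}$ (giving $g(B_v)=0$). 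Thus $g(B_v)=1$ if and only if exactly one of $a,b$ is a branch point of $f_v$. If the ramification hypothesis holds, then for general $v$ at least one of $a,b$ is a branch point, so $k\in\{2,3\}$; the case $k=2$ for general $v$ would give $g(B_v)=0$, realizing $X$ as birationally ruled over $\PP^1$, impossible since $X$ is a K3 surface, so $k=3$ and $\cB_X$ is an elliptic fibration. Conversely, if $\cB_X$ is elliptic then $g(B_v)=1$ for general $v$, forcing exactly one of $a,b$ to be a branch point of $f_v$; a branch point of $f_v$ varying with $v$ over a Zariski-dense set sweeps out a horizontal component of the branch divisor of the family in $\PP^1\times\PP^1$, so this persists for every $v$ for which $D_v$ is a smooth conic, the remaining (reducible) $D_v$ being handled directly.

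It remains to verify that $\cB_X$ is \emph{Jacobian}. I would exhibit multisections of coprime degree. A general fibre of $\cE_X$ is a base change of a fibre $F$ of $\cE_R$, and since $F\cdot D_v=2$ it is a $2$-section of $\cB_X$. On the other hand the conic bundle $\cB\colon R\to\PP^1$ has a section by Tsen's theorem, its generic fibre being a conic over the $C_1$-field $k(\PP^1)$; base changing this section along $\tau$ yields a multisection of $\cB_X$ of degree $1$ or $3$ (a genuine section when that section of $\cB$ is contracted by $\cE_R$, a $3$-section otherwise). Hence the generic fibre of $\cB_X$ is a smooth genus $1$ curve over $k(\PP^1)$ carrying divisors of degrees $2$ and $3$, so of index $1$; by Riemann--Roch it then has a rational point, and $\cB_X$ admits a section.

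I expect the main obstacle to be the bookkeeping at the non-general fibres $D_v$ — reducible conics, and conics lying over the branch points of $\tau$ — where one must pin down both the local base-change picture and the correct reading of ``$f_v$ ramifies over $a$ or $b$'' so that the equivalence holds for \emph{every} $v$. By comparison the cuspidal base-change computation $\{u^2=z^3\}$, the ramification count, and the coprime-index argument for the section are routine.
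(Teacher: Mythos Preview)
Your approach is essentially the same as the paper's: both compute the genus of the general fibre $B_v$ via Riemann--Hurwitz for the degree $3$ cover $B_v\to D_v$, counting ramification according to how many preimages $\{a,b\}$ has under $f_v$. The paper writes this more tersely, simply observing that $B_v\to D_v$ ramifies at $3$ points (so $g(B_v)=1$) precisely when one of $c_v,d_v$ equals $a$ or $b$, and otherwise at $4$ points (so $g(B_v)=2$).

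You are more careful on two points the paper leaves implicit. First, you explicitly dispose of the case $\{c_v,d_v\}=\{a,b\}$ for general $v$ by noting that $g(B_v)=0$ would make $X$ birationally ruled, contradicting $K_X=0$; the paper silently assumes ``the other ramification point is distinct from $b$''. Second, you supply an argument for the \emph{Jacobian} part of the conclusion via coprime multisection degrees, which the paper's proof does not address at all. Your sketch for passing from ``general $v$'' to ``every $v$'' via the horizontal component of the branch divisor in $\PP^1\times\PP^1$ is also more explicit than the paper's bare assertion that ``$c_v=a$ for every $v$''. The remaining caveats you flag about reducible fibres $D_v$ are genuine loose ends in both arguments, but the paper does not treat them either.
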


\begin{proof}
    Suppose $f_v \colon D_v \to \PP^1$ ramifies in $a$ for every $v \in \PP^1$, and assume that the other ramification point is distinct from $b$. Then, the map $D_v \times_{\PP^1} \PP^1 =: B_v \to D_v$ ramifies in $a', b_1, b_2$, where $f_v(a') = a$ and $f_v(b_1) = f_v(b_2) = b$. Then applying the Riemann-Hurwitz theorem, we have $g(B_v) = 1$. On the other hand, if $\cB$ induces an elliptic fibration in $X$, then for all but finitely many $v \in \PP^1$ it is true that $g(B_v) = 1$, and by the Riemann-Hurwitz theorem the map $B_v \to D_v$ must ramify in 3 points. Let $c_v,d_v$ be the ramification points of $f_v$. If $c_v,d_v$ are distinct from $a,b$, then $B_v \to D_v$ would ramify in 4 distinct points, and $g(C'_v) = 2$. Then we can assume without loss of generality that $c_v = a$ for every $v \in \PP^1$.
\end{proof}

\begin{exemp}
    Let $\cE \colon R \to \PP^1$ be the rational elliptic surface induced by the pencil of cubics $\Lambda = s\cF + t \cG$ in $\PP^2$, where $\cF = y^2z - x^3 + xz^2 - 4z^3$ and $\cG = (x+z)(x-z)z$. The pencil $\Lambda_P = \alpha x - \beta z$ describes the lines of $\PP^2$ through the point $P = [0{:}1{:}0]$. Since $P$ is a base point of $\Lambda$, this induces a conic bundle $\cB \colon R \to \PP^1$. We want to show that $\cB$ defines an elliptic fibration on the base change of $X$ through the map $\tau([s{:}t]) = [s^3{:}t^3]$.

    Firstly, notice that $\tau$ ramifies in the point $[0{:}1], [1{:}0]$. For every $v = [\alpha{:}\beta] \in \PP^1$, the line $\alpha x = \beta z$ in $\Lambda_P$ is the image of the following map
    \begin{align*}
        \rho_{v} \colon \PP^1 &\longrightarrow \PP^2\\
        [u_1{:}u_2] &\longmapsto [\beta u_1 {:} u_2 {:} \alpha u_1].
    \end{align*}
    We can evaluate the map $f_v \colon C_v \to \PP^1$ as the resolution of the composition of $\rho_v$ with the cubic map $[x{:}y{:}z] \mapsto [\cF(x,y,z){:}\cG(x,y,z)]$, thus obtaining
    \begin{align*}
        f_v \colon \PP^1 &\longrightarrow \PP^1\\
        [u_1{:}u_2] & \longmapsto [\alpha u_2^2{+}(\alpha^2\beta{-}\beta^3{-}4\alpha^3)u_1^2 {:} (\alpha\beta^2{-}\alpha^3)u_1^2].
    \end{align*}
    Then, for each $v = [\alpha{:}\beta]$ the ramification points of $f_v$ are $[1{:}0]$ and $[\alpha^2{-}\beta^3{-}4\alpha^3{:}ab^2{-}a^3]$. Since $[1{:}0]$ is a ramification point for $\tau$ and for every $f_v$, $\varphi$ induces an elliptic fibration on the base change $X$ of $R$ by $\tau$.
\end{exemp}

\section{A \texorpdfstring{$\cJ_2$}{J2}-classification of K3 surfaces with non-symplectic automorphisms of order 3}\label{section_nishiyama}

In this section, our goal is to provide a $\cJ_2$-classification to K3 surfaces $X$ with non-symplectic automorphisms $\sigma$ of order 3. In order to make this classification complete, we add two restrictions to $X$. Firstly, we assume that the Picard number of $X$ is at least 12. Then, by Proposition \ref{picard_geq_12_works}, we can apply the Kneser--Nishiyama method (see Section \ref{nishiyama-method}). Secondly, we assume that $\sigma$ acts trivially in $\NS X$. Then, by work of Artebani and Sarti, $\NS X$ must be equal to one of 10 possible lattices, already assuming $\rho(X) \geq 12$ (see \cite{AS}, Proposition 3.2). For each possibility, Table \ref{List_K3_applied_nishiyama} shows explicitly the Néron--Severi and transcendental lattices, as well as the number $n$ of isolated fixed points of $\sigma$, the number $k$ of fixed curves of $\sigma$, and $g$ the greatest genus amongst the fixed curves.

\begin{table}[ht]\label{List_K3_applied_nishiyama}
    \centering
    \begin{tabular}{|c|c|c|c|c|c|}
        \hline
        No. & $\NS X $ & $T_X$ & $n$ & $k$ & $g$\\
        \hline
        1 & $U \oplus A_2^{\oplus 5}$ & $U \oplus U(3) \oplus A_2^{\oplus 3}$ & $5$ & $2$ & $0$\\
        \hline
        2 & $U \oplus E_6 \oplus A_2^{\oplus 2}$ & $U \oplus U(3) \oplus E_6$ & $5$ & $3$ & $1$\\
        \hline
        3 & $U \oplus E_8 \oplus A_2$ & $U^{\oplus 2} \oplus E_6$ & $5$ & $4$ & $2$\\
        \hline
        4 & $U \oplus E_6 \oplus A_2^{\oplus 3}$ & $U \oplus U(3) \oplus A_2^{\oplus 2}$ & $6$ & $3$ & $0$\\
        \hline
        5 & $U \oplus E_6^{\oplus 2}$ & $U^{\oplus 2} \oplus A_2^{\oplus 2}$ & $6$ & $4$ & $1$\\
        \hline
        6 & $U \oplus E_6^{\oplus 2} \oplus A_2$ & $U \oplus U(3) \oplus A_2$ & $7$ & $4$ & $0$\\
        \hline
        7 & $U \oplus E_6 \oplus E_8$ & $U^{\oplus 2} \oplus A_2$ & $7$ & $5$ & $1$\\
        \hline
        8 & $U \oplus E_6 \oplus E_8 \oplus A_2$ & $U \oplus U(3)$ & $8$ & $5$ & $0$\\
        \hline
        9 & $U \oplus E_8^{\oplus 2}$ & $U^{\oplus 2}$ & $8$ & $6$ & $1$\\
        \hline
        10 & $U \oplus E_8^{\oplus 2} \oplus A_2$ & $A_2(-1)$ & $9$ & $6$ & $0$\\
        \hline
    \end{tabular}
    \caption{Genera and lattices for each pair $(n,k)$}
    \label{order3_lattices}
\end{table}

\begin{thm}
    Let $X$ be a K3 surface with $\rho(X) \geq 12$ and $\sigma \in \Aut(X)$ a non-symplectic automorphism acting trivially on $\NS X$. Then, Table \ref{all_fibrations} describes the $\cJ_2$-classification of elliptic fibrations of $X$. Each fibration is given with its respective $ADE$-type $T$, and Mordell--Weil group $\MW(\cE)$.
\end{thm}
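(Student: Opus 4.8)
The plan is to carry out the Kneser--Nishiyama method explicitly for each of the ten K3 surfaces listed in Table \ref{order3_lattices}, exactly as described in Section \ref{nishiyama-method}. The input data we need --- $\NS X$ and $T_X$ --- is supplied by Table \ref{order3_lattices}, and the condition $\rho(X)\geq 12$ guarantees via Proposition \ref{picard_geq_12_works} and Theorem \ref{nishiyama_thm} that the output is a genuine $\cJ_2$-classification (i.e.\ $\cJ_2'(X)=\cJ_2(X)$ and every frame lattice is realized by an actual Jacobian elliptic fibration). So the theorem is really a large but finite bookkeeping computation, and the proof will consist of organizing that computation surface by surface.

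First, for each surface I would produce the auxiliary lattice $T_0$ of root type with $\rank T_0 = \rank T_X + 4$, $G_{T_0}\cong G_{T_X}$ and $q_{T_0}=q_{T_X}$; this is step 1 of the method and requires identifying the discriminant form of each $T_X$ in Table \ref{order3_lattices} and matching it to a negative-definite root lattice of the right rank (for example, handling the $U(3)$ and $A_2$ summands, whose discriminant forms are the relevant $3$-torsion pieces). Second, for each of the $24$ Niemeier lattices $L$ I would enumerate the primitive embeddings $\phi\colon T_0\hookrightarrow L_{\mathrm{root}}$ up to the Weyl group $\W(L)$ --- this is the combinatorial heart of the computation, reducing to embeddings of root lattices into root lattices. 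Third, for each embedding compute $M=\phi(T_0)^{\perp L_{\mathrm{root}}}$, read off $M_{\mathrm{root}}=W_{\mathrm{root}}$ (the $ADE$-type $T$ of the fibration), compute $\rank\MW(\cE)=\rank M-\rank M_{\mathrm{root}}$, and determine the torsion from $\overline{W_{\mathrm{root}}}/W_{\mathrm{root}}$, using Theorem \ref{nishiyama_thm}(i)--(iii). Collating these across all $L$ and discarding duplicate frame lattices yields the rows of Table \ref{all_fibrations} for that surface.

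The main obstacle I anticipate is the second step: the enumeration of primitive embeddings of $T_0$ into each $L_{\mathrm{root}}$ up to $\W(L)$, and the attendant verification that no two embeddings give isometric orthogonal complements while no frame lattice is missed. This is where errors creep in, and it is genuinely case-heavy because there are ten choices of $T_0$ and twenty-four Niemeier lattices; in practice many $(T_0,L)$ pairs admit no primitive embedding for rank or discriminant-form reasons, which prunes the tree substantially, but the surviving cases still need careful treatment of the gluing between $W_{\mathrm{root}}$ and its primitive closure to get the torsion right. A secondary subtlety is confirming that the various $T_0$'s chosen for the ten surfaces are consistent and that, when two of the surfaces in Table \ref{order3_lattices} share a discriminant form up to the relevant identification, the bookkeeping is done uniformly. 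I would present the details of the lattice choices and the orthogonal-complement computations explicitly (as promised in the introduction), and simply tabulate the outcome in Table \ref{all_fibrations}, since the individual computations are routine once the embeddings are in hand.
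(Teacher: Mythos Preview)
Your proposal is correct and matches the paper's approach exactly: the paper's proof is a one-line pointer to the Kneser--Nishiyama method, with the explicit computations (choice of $T_0$ for each $T_X$, verification of discriminant forms, enumeration of primitive embeddings into Niemeier root lattices, orthogonal complements, and torsion) deferred to Section \ref{application_nishiyama} and the appendix. Your description of the steps and of the main difficulty (careful bookkeeping of embeddings and torsion) is precisely what the paper carries out.
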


\begin{landscape}

\begin{table}[ht]\label{all_fibrations}
    \centering
        \caption{All possible elliptic fibrations on K3 surfaces with non-symplectic automorphisms of order 3}

        \textbf{1.} $\NS X = U \oplus A_2^{\oplus 5}$, $T_X = U \oplus U(3) \oplus A_2^{\oplus 3} $, $T_0 = E_6 \oplus A_2^{\oplus 4}$\\
        \vspace{3pt}
        \begin{tabular}{|>{\centering\arraybackslash}p{0.9cm}|>{\centering\arraybackslash}p{2.3cm}|>{\centering\arraybackslash}p{2.3cm}|>{\centering\arraybackslash}p{3cm}|>{\centering\arraybackslash}p{2.5cm}|>{\centering\arraybackslash}p{4.5cm}|>{\centering\arraybackslash}p{2.3cm}|>{\centering\arraybackslash}p{1.5cm}|}
        \hline
        No. & $L_\mathrm{root}$ & \multicolumn{3}{|c|}{Embeddings} & $M$ & $T = M_\mathrm{root}$ & $\MW(\cE)$\\
        \hline
        1.1 & $E_8^{\oplus 3}$ & $E_6 \subset E_8$ & $(A_2^{\oplus 2})^{\oplus 2} \subset E_8^{\oplus 2}$ & & $A_2^{\oplus 5}$ & $A_2^{\oplus 5}$ & 0\\
        \hline
        1.2 & $E_8 \oplus D_{16} $ & $E_6 \subset E_8$ & $A_2^{\oplus 4} \subset D_{16}$ & & $A_2 \oplus (A_2^{\oplus 4})^{\perp D_{16}}$ & $D_4 \oplus A_2$ & $\ZZ^4$\\
        \hline
        1.3 & $E_7^{\oplus 2} \oplus D_{10}$ & $E_6 \subset E_7$ & $A_2 \subset E_7$ & $A_2^{\oplus 3} \subset D_{10}$ & $(-6) \oplus A_5 \oplus (A_2^{\oplus 3})^{\perp D_{10}} $ & $A_5$ & $\ZZ^5$\\
        \hline
        1.4 & $E_7^{\oplus 2} \oplus D_{10}$ & $E_6 \subset E_7$ & $A_2^{\oplus 2} \subset E_7$ & $A_2^{\oplus 2} \subset D_{10}$ & $(-6)^{\oplus 2} \oplus A_2 \oplus (A_2^{\oplus 2})^{\perp D_{10}}$ & $D_4 \oplus A_2$ & $\ZZ^4$ \\
        \hline
        1.5 & $E_7 \oplus A_{17} $ & $E_6 \subset E_7$ & $A_2^{\oplus 4} \subset A_{17}$ & & $(-6) \oplus (A_2^{\oplus 4})^{\perp A_{17}} $ & $A_5$ & $\ZZ^5$\\
        \hline
        1.6 & $E_6^{\oplus 4}$ & $E_6 \subset E_6$ & $(A_2^{\oplus 2})^{\oplus 2} \subset E_6^{\oplus 2}$ & & $E_6 \oplus A_2^{\oplus 2} $ & $E_6 \oplus A_2^{\oplus 2}$ & 0\\
        \hline
        1.7 & $E_6^{\oplus 4}$ & $E_6 \subset E_6$ & $A_2^{\oplus 2} \subset E_6$ & $(A_2)^{\oplus 2} \subset E_6^{\oplus 2}$ & $A_2^{\oplus 5}$ & $A_2^{\oplus 5}$ & 0\\
        \hline
        1.8 & $E_6 \oplus D_7 \oplus A_{11}$ & $E_6 \subset E_6$ & $A_2^{\oplus 4} \subset A_{11}$ & & $D_7 \oplus (A_2^{\oplus 4})^{\perp A_{11}} $ & $D_7$ & $\ZZ^3$\\
        \hline
        1.9 & $E_6 \oplus D_7 \oplus A_{11}$ & $E_6 \subset E_6$ & $A_2 \subset D_7$ & $A_2^{\oplus 3} \subset A_{11}$ & $A_2^{\perp D_{7}} \oplus (A_2^{\oplus 3})^{\perp A_{11}} $ & $D_4 \oplus A_2$ & $\ZZ^4$\\
        \hline
        1.10 & $E_6 \oplus D_7 \oplus A_{11}$ & $E_6 \subset E_6$ & $A_2^{\oplus 2} \subset D_7$ & $A_2^{\oplus 2} \oplus A_{11}$ & $(A_2^{\oplus 2})^{\perp D_7} \oplus (A_2^{\oplus 2})^{\perp A_{11}}$ & $A_5$ & $\ZZ^5$\\
        \hline
        \end{tabular}\\
        \vspace{5pt}

        \textbf{2.} $\NS X = U \oplus E_6 \oplus A_2^{\oplus 2}$, $T_X = U \oplus U(3) \oplus E_6$, $T_0 = E_6^{\oplus 2} \oplus A_2$\\
        \vspace{3pt}
        \begin{tabular}{|>{\centering\arraybackslash}p{0.9cm}|>{\centering\arraybackslash}p{2.3cm}|>{\centering\arraybackslash}p{2.3cm}|>{\centering\arraybackslash}p{3cm}|>{\centering\arraybackslash}p{2.5cm}|>{\centering\arraybackslash}p{4.5cm}|>{\centering\arraybackslash}p{2.3cm}|>{\centering\arraybackslash}p{1.5cm}|}
        \hline
        No. & $L_\mathrm{root}$ & \multicolumn{3}{|c|}{Embeddings} & $M$ & $T = M_\mathrm{root}$ & $\MW(\cE)$\\
        \hline
        2.1 & $E_8^{\oplus 3}$ & $E_6^{\oplus 2} \subset E_8^{\oplus 2}$ & $A_2 \subset E_8$ & & $E_6 \oplus A_2^{\oplus 2}$ & $E_6 \oplus A_2^{\oplus 2}$ & 0\\
        \hline
        2.2 & $E_7^{\oplus 2} \oplus D_{10}$ & $E_6^{\oplus 2} \subset E_7^{\oplus 2}$ & $A_2 \subset D_{10}$ & & $(-6)^{\oplus 2} \oplus A_2^{\perp D_{10}}$ & $D_7$ & $\ZZ^3$\\
        \hline
        2.3 & $E_6^{\oplus 4}$ & $E_6^{\oplus 2} \subset E_6^{\oplus 2}$ & $A_2 \subset E_6$ & & $E_6 \oplus A_2^{\oplus 2}$ & $E_6 \oplus A_2^{\oplus 2}$ & 0\\
        \hline
        \end{tabular}\\
        \vspace{5pt}

        \textbf{3.} $\NS X = U \oplus E_8 \oplus A_2$, $T_X = U^{\oplus 2} \oplus E_6$, $T_0 = E_8 \oplus E_6$\\
        \vspace{3pt}
        \begin{tabular}{|>{\centering\arraybackslash}p{0.9cm}|>{\centering\arraybackslash}p{2.3cm}|>{\centering\arraybackslash}p{2.3cm}|>{\centering\arraybackslash}p{3cm}|>{\centering\arraybackslash}p{2.5cm}|>{\centering\arraybackslash}p{4.5cm}|>{\centering\arraybackslash}p{2.3cm}|>{\centering\arraybackslash}p{1.5cm}|}
        \hline
        No. & $L_\mathrm{root}$ & \multicolumn{3}{|c|}{Embeddings} & $M$ & $T = M_\mathrm{root}$ & $\MW(\cE)$\\
        \hline
        3.1 & $E_8^{\oplus 3}$ & $E_8 \subset E_8$ & $E_6 \subset E_8$ & & $E_8 \oplus A_2$ & $E_8 \oplus A_2$ & 0\\
        \hline
        \end{tabular}\\
        \vspace{5pt}

        \textbf{4.} $\NS X = U \oplus E_6 \oplus A_2^{\oplus 3}$, $T_X = U \oplus U(3) \oplus A_2^{\oplus 2}$, $T_0 = E_6 \oplus A_2^{\oplus 3}$\\
        \vspace{5pt}
        \begin{tabular}{|>{\centering\arraybackslash}p{0.9cm}|>{\centering\arraybackslash}p{2.3cm}|>{\centering\arraybackslash}p{2.3cm}|>{\centering\arraybackslash}p{3cm}|>{\centering\arraybackslash}p{2.5cm}|>{\centering\arraybackslash}p{4.5cm}|>{\centering\arraybackslash}p{2.3cm}|>{\centering\arraybackslash}p{1.5cm}|}
        \hline
        No. & $L_\mathrm{root}$ & \multicolumn{3}{|c|}{Embeddings} & $M$ & $T = M_\mathrm{root}$ & $\MW(\cE)$\\
        \hline
        4.1 & $E_8^{\oplus 3}$ & $E_6 \subset E_8$ & $A_2^{\oplus 2} \subset E_8$ & $A_2 \subset E_8$ & $E_6 \oplus A_2^{\oplus 3}$ & $E_6 \oplus A_2^{\oplus 3}$ & 0\\
        \hline
        4.2 & $E_8 \oplus D_{16}$ & $E_6 \subset E_8$ & $A_2^{\oplus} \subset D_{16}$ & & $A_2 \oplus (A_2^{\oplus 3})^{\perp D_{16}} $ & $D_7 \oplus A_2$ & $\ZZ^3$\\
        \hline
        4.3 & $E_7^{\oplus 2} \oplus D_{10}$ & $E_6 \oplus E_7$ & $A_2 \oplus D_{10}$ & & $(-6) \oplus E_7 \oplus (A_2^{\oplus 3})^{D_{10}}$ & $E_7$ & $\ZZ^5$\\
        \hline
        \end{tabular}\\
\end{table}

\newpage

\begin{table}[ht]
    \centering    
        \begin{tabular}{|>{\centering\arraybackslash}p{0.9cm}|>{\centering\arraybackslash}p{2.3cm}|>{\centering\arraybackslash}p{2.3cm}|>{\centering\arraybackslash}p{3cm}|>{\centering\arraybackslash}p{2.5cm}|>{\centering\arraybackslash}p{4.5cm}|>{\centering\arraybackslash}p{2.3cm}|>{\centering\arraybackslash}p{1.5cm}|}
        \hline
        4.4 & $E_7^{\oplus 2} \oplus D_{10}$ & $E_6 \subset E_7$ & $A_2 \subset E_7$ & $A_2^{\oplus 2} \subset D_{10}$ & $(-6) \oplus A_5 \oplus (A_2^{\oplus 2})^{\perp D_{10}}$ & $D_4 \oplus A_5$ & $\ZZ^3$\\
        \hline
        4.5 & $E_7^{\oplus 2} \oplus D_{10}$ & $E_6 \subset E_7$ & $A_2^{\oplus 2} \subset E_7$ & $A_2 \subset D_{10}$ & $(-6)^{\oplus} \oplus A_2 \oplus A_2^{\perp D_{10}}$ & $D_7 \oplus A_2$ & $\ZZ^3$\\
        \hline
        4.6 & $E_7 \oplus A_{17}$ & $E_6 \subset E_7$ & $A_2^{\oplus 3} \subset A_{17} $ & & $(-6) \oplus (A_2^{\oplus 3})^{\perp A_{17}} $ & $A_8$ & $\ZZ^4$\\
        \hline
        4.7 & $E_6^{\oplus 4}$ & $E_6 \subset E_6$ & $A_2^{\oplus 2} \subset E_6$ & $A_2 \subset E_6$ & $E_6 \oplus A_2^{\oplus 3}$ & $E_6 \oplus A_2^{\oplus 3}$ & 0\\
        \hline
        4.8 & $E_6^{\oplus 4}$ & $E_6 \subset E_6$ & $(A_2)^{\oplus 3} \subset E_6^{\oplus 3}$ & & $A_2^{\oplus 6}$ & $A_2^{\oplus 6}$ & $\ZZ/3$\\
        \hline
        4.9 & $E_6 \oplus D_7 \oplus A_{11}$ & $E_6 \subset E_6$ & $A_2^{\oplus 3} \subset A_{11}$ & & $D_7 \oplus (A_2^{\oplus})^{\perp A_{11}} $ & $D_7 \oplus A_2$ & $\ZZ^3$\\
        \hline
        4.10 & $E_6 \oplus D_7 \oplus A_{11}$ & $E_6 \subset E_6$ & $A_2 \subset D_7$ & $A_2^{\oplus 2} \subset A_{11}$ & $A_2^{\perp D_{7}} \oplus (A_2^{\oplus 2})^{\perp A_{11}}$ & $D_4 \oplus A_5$ & $\ZZ^3$\\
        \hline
        4.11 & $E_6 \oplus D_7 \oplus A_{11}$ & $E_6 \subset E_6$ & $A_2^{\oplus 2} \subset D_7$ & $A_2 \subset A_{11}$ & $(A_2^{\oplus 2})^{\perp D_7} \oplus A_2^{\perp A_{11}}$ & $A_8$ & $\ZZ^4$\\
        \hline
        \end{tabular}\\
        \vspace{5pt}

        \textbf{5.} $\NS X = U \oplus E_6^{\oplus 2}$, $T_X = U^{\oplus 2} \oplus A_2^{\oplus 2}$, $T_0 = E_8 \oplus A_2^{\oplus 2}$\\
        \vspace{3pt}
        \begin{tabular}{|>{\centering\arraybackslash}p{0.9cm}|>{\centering\arraybackslash}p{2.3cm}|>{\centering\arraybackslash}p{2.3cm}|>{\centering\arraybackslash}p{3cm}|>{\centering\arraybackslash}p{2.5cm}|>{\centering\arraybackslash}p{4.5cm}|>{\centering\arraybackslash}p{2.3cm}|>{\centering\arraybackslash}p{1.5cm}|}
        \hline
        No. & $L_\mathrm{root}$ & \multicolumn{3}{|c|}{Embeddings} & $M$ & $T = M_\mathrm{root}$ & $\MW(\cE)$\\
        \hline
        5.1 & $E_8^{\oplus 3}$ & $E_8 \subset E_8$ & $A_2^{\oplus 2} \subset E_8$ & & $E_8 \oplus A_2^{\oplus 2}$ & $E_8 \oplus A_2^{\oplus 2}$ & 0\\
        \hline
        5.2 & $E_8^{\oplus 3}$ & $E_8 \subset E_8$ & $(A_2)^{\oplus 2} \subset E_8^{\oplus 2}$ & & $E_6^{\oplus 2}$ & $E_6^{\oplus 2}$ & 0\\
        \hline
        5.3 & $E_8 \oplus D_{16}$ & $E_8 \subset E_8$ & $A_2^{\oplus 2} \subset D_{16}$ & & $(A_2^{\oplus 2})^{\perp D_{16}}$ & $D_{10}$ & $\ZZ^2$\\
        \hline
        \end{tabular}\\
        \vspace{5pt}

        \textbf{6.} $\NS X = U \oplus E_6^{\oplus 2} \oplus A_2$, $T_X = U \oplus U(3) \oplus A_2$, $T_0 = E_6 \oplus A_2^{\oplus 2}$\\
        \vspace{3pt}
        \begin{tabular}{|>{\centering\arraybackslash}p{0.9cm}|>{\centering\arraybackslash}p{2.3cm}|>{\centering\arraybackslash}p{2.3cm}|>{\centering\arraybackslash}p{3cm}|>{\centering\arraybackslash}p{2.5cm}|>{\centering\arraybackslash}p{4.5cm}|>{\centering\arraybackslash}p{2.3cm}|>{\centering\arraybackslash}p{1.5cm}|}
        \hline
        No. & $L_\mathrm{root}$ & \multicolumn{3}{|c|}{Embeddings} & $M$ & $T = M_\mathrm{root}$ & $\MW(\cE)$\\
        \hline
        6.1 & $E_8^{\oplus 3}$ & $E_6 \subset E_8$ & $A_2^{\oplus 2} \subset E_8$ & & $E_8 \oplus A_2^{\oplus 3}$ & $E_8 \oplus A_2^{\oplus 3}$ & 0\\
        \hline
        6.2 & $E_8^{\oplus 3}$ & $E_6 \subset E_8$ & $(A_2)^{\oplus 2} \subset E_8^{\oplus 2}$ & & $E_6^{\oplus 2} \oplus A_2$ & $E_6^{\oplus 2} \oplus A_2$ & 0\\
        \hline
        6.3 & $E_8 \oplus D_{16}$ & $E_6 \subset E_8$ & $A_2^{\oplus 2} \subset D_{16}$ & & $A_2 \oplus (A_2^{\oplus 2})^{\perp D_{16}}$ & $D_{10} \oplus A_2$ & $\ZZ^2$\\
        \hline
        6.4 & $E_7^{\oplus 2} \oplus D_{10}$ & $E_6 \subset E_7$ & $A_2^{\oplus 2} \subset E_7$ & & $(-6)^{\oplus 2} \oplus D_{10} \oplus A_2$ & $D_{10} \oplus A_2$ & $\ZZ^2$\\
        \hline
        6.5 & $E_7^{\oplus 2} \oplus D_{10}$ & $E_6 \subset E_7$ & $A_2 \subset E_7$ & $A_2 \subset D_{10}$ & $(-6)\oplus A_5 \oplus A_2^{\perp D_{10}} $ & $D_7 \oplus A_5$ & $\ZZ^2$\\
        \hline
        6.6 & $E_7^{\oplus 2} \oplus D_{10}$ & $E_6 \subset E_7$ & $A_2^{\oplus 2} \subset D_{10}$ & & $(-6) \oplus E_7 \oplus (A_2^{\oplus 2})^{\perp D_{10}} $ & $E_7 \oplus D_4$ & $\ZZ^3$ \\
        \hline
        6.7 & $E_7 \oplus A_{17}$ & $E_6 \subset E_7$ & $A_2^{\oplus 2} \subset A_{17}$ & & $(-6) \oplus (A_2^{\oplus 2})^{\perp A_{17}}$ & $A_{11}$ & $\ZZ^3$ \\
        \hline
        6.8 & $E_6^{\oplus 4}$ & $E_6 \subset E_6$ & $A_2^{\oplus} \subset E_6$ & & $E_6^{\oplus 2} \oplus A_2$ & $E_6^{\oplus 2} \oplus A_2$ & 0\\
        \hline
        6.9 & $E_6^{\oplus 4}$ & $E_6 \subset E_6$ & $A_2 \subset E_6$ & $A_2 \subset E_6$ & $E_6 \oplus A_2^{\oplus 4}$ & $E_6 \oplus A_2^{\oplus 4}$ & $\ZZ/3$\\
        \hline
        6.10 & $E_6 \oplus D_7 \oplus A_{11}$ & $E_6 \subset E_6$ & $A_2^{\oplus 2} \subset D_7$ & & $(A_2^{\oplus 2})^{\perp D_7} \oplus A_{11}$ & $A_{11}$ & $\ZZ^3$\\
        \hline
        6.11 & $E_6 \oplus D_7 \oplus A_{11}$ & $E_6 \subset E_6$ & $A_2 \subset D_7$ & $A_2 \subset A_{11}$ & $A_2^{\perp D_{7}} \oplus A_2^{\perp A_{11}}$ & $D_4 \oplus A_8$ & $\ZZ^2$\\
        \hline
        6.12 & $E_6 \oplus D_7 \oplus A_{11}$ & $E_6 \subset E_6$ & $A_2^{\oplus 2} \subset A_{11}$ & & $D_7 \oplus (A_2^{\oplus 2})^{\perp A_{11}} $ & $D_7 \oplus A_5$ & $\ZZ^2$\\
        \hline
        \end{tabular}
    \end{table}

    \newpage

\begin{table}
    \centering
        \textbf{7.} $\NS X = U \oplus E_6 \oplus E_8$, $T_X = U^{\oplus 2} \oplus A_2$, $T_0 = E_8 \oplus A_2$\\
        \vspace{3pt}
        \begin{tabular}{|>{\centering\arraybackslash}p{0.9cm}|>{\centering\arraybackslash}p{2.3cm}|>{\centering\arraybackslash}p{2.3cm}|>{\centering\arraybackslash}p{3cm}|>{\centering\arraybackslash}p{2.5cm}|>{\centering\arraybackslash}p{4.5cm}|>{\centering\arraybackslash}p{2.3cm}|>{\centering\arraybackslash}p{1.5cm}|}
        \hline
        No. & $L_\mathrm{root}$ & \multicolumn{3}{|c|}{Embeddings} & $M$ & $T = M_\mathrm{root}$ & $\MW(\cE)$\\
        \hline
        7.1 & $E_8^{\oplus 3}$ & $E_8 \subset E_8$ & $A_2 \subset E_8$ & & $E_6 \oplus E_8$ & $E_6 \oplus E_8$ & 0\\
        \hline
        7.2 & $E_8 \oplus D_{16}$ & $E_8 \subset E_8$ & $A_2 \subset D_{16}$ & & $A_2^{\perp D_{16}}$ & $D_{13}$ & $\ZZ$\\
        \hline
        \end{tabular}\\
        \vspace{5pt}

        \textbf{8.} $\NS X = U \oplus E_6 \oplus E_8 \oplus A_2$, $T_X = U \oplus U(3)$, $T_0 = E_6 \oplus A_2$\\
        \vspace{3pt}
        \begin{tabular}{|>{\centering\arraybackslash}p{0.9cm}|>{\centering\arraybackslash}p{2.3cm}|>{\centering\arraybackslash}p{2.3cm}|>{\centering\arraybackslash}p{3cm}|>{\centering\arraybackslash}p{2.5cm}|>{\centering\arraybackslash}p{4.5cm}|>{\centering\arraybackslash}p{2.3cm}|>{\centering\arraybackslash}p{1.5cm}|}
        \hline
        No. & $L_\mathrm{root}$ & \multicolumn{3}{|c|}{Embeddings} & $M$ & $T = M_\mathrm{root}$ & $\MW(\cE)$\\
        \hline
        8.1 & $E_8^{\oplus 3}$ & $E_6 \subset E_8$ & $A_2 \subset E_8$ & & $E_6 \oplus E_8 \oplus A_2$ & $E_6 \oplus E_8 \oplus A_2$ & 0\\
        \hline
        8.2 & $E_8 \oplus D_{16}$ & $E_6 \subset E_8$ & $A_2 \subset D_{16}$ & & $A_2 \oplus A_2^{\perp D_{16}}$ & $D_{13} \oplus A_2$ & $\ZZ$\\
        \hline
        8.3 & $E_7^{\oplus 2} \oplus D_{10}$ & $E_6 \subset E_7$ & $A_2 \subset E_7$ & & $(-6) \oplus A_5 \oplus D_{10} $ & $D_{10} \oplus A_5$ & $\ZZ \oplus \ZZ/2$\\
        \hline
        8.4 & $E_7^{\oplus 2} \oplus D_{10}$ & $E_6 \subset E_7$ & $A_2 \subset D_{10}$ & & $(-6) \oplus E_7 \oplus A_2^{\perp D_{10}}$ & $E_7 \oplus D_7$ & $\ZZ^2$\\
        \hline
        8.5 & $E_7 \oplus A_{17}$ & $E_6 \subset E_7$ & $A_2 \subset A_{17}$ & & $(-6) \oplus A_2^{\perp A_{17}}$ & $A_{14}$ & $\ZZ^2$\\
        \hline
        8.6 & $E_6^{\oplus 4}$ & $E_6 \subset E_6$ & $A_2 \subset E_6$ & & $E_6^{\oplus 2} \oplus A_2^{\oplus 2}$ & $E_6^{\oplus 2} \oplus A_2^{\oplus 2}$ & $\ZZ/3$\\
        \hline
        8.7 & $E_6 \oplus D_7 \oplus A_{11}$ & $E_6 \subset E_6$ & $A_2 \subset D_7$ & & $A_2^{\perp D_7} \oplus A_{11}$ & $D_4 \oplus A_{11} $ & $\ZZ$\\
        \hline
        8.8 & $E_6 \oplus D_7 \oplus A_{11}$ & $E_6 \subset E_6$ & $A_2 \subset A_{11}$ & & $D_7 \oplus A_2^{\perp A_{11}}$ & $D_7 \oplus A_8$ & $\ZZ$\\
        \hline
        \end{tabular}\\
        \vspace{5pt}

        \textbf{9.} $\NS X = U \oplus E_8^{\oplus 2}$, $T_X = U^{\oplus 2}$, $T_0 = E_8$\\
        \vspace{3pt}
        \begin{tabular}{|>{\centering\arraybackslash}p{0.9cm}|>{\centering\arraybackslash}p{2.3cm}|>{\centering\arraybackslash}p{2.3cm}|>{\centering\arraybackslash}p{3cm}|>{\centering\arraybackslash}p{2.5cm}|>{\centering\arraybackslash}p{4.5cm}|>{\centering\arraybackslash}p{2.3cm}|>{\centering\arraybackslash}p{1.5cm}|}
        \hline
        No. & $L_\mathrm{root}$ & \multicolumn{3}{|c|}{Embeddings} & $M$ & $T = M_\mathrm{root}$ & $\MW(\cE)$\\
        \hline
        9.1 & $E_8^{\oplus 3}$ & $E_8 \subset E_8$ & & & $E_8^{\oplus 2}$ & $E_8^{\oplus 2}$ & 0\\
        \hline
        9.2 & $E_8 \oplus D_{16}$ & $E_8 \subset E_8$ & & & $D_{16}$ & $D_{16}$ & $\ZZ/2$\\
        \hline
        \end{tabular}\\
        \vspace{5pt}

        \textbf{10.} $\NS X = U \oplus E_8^{\oplus 2} \oplus A_2$, $T_X = A_2(-1)$, $T_0 = E_6$\\
        \vspace{3pt}
        \begin{tabular}{|>{\centering\arraybackslash}p{0.9cm}|>{\centering\arraybackslash}p{2.3cm}|>{\centering\arraybackslash}p{2.3cm}|>{\centering\arraybackslash}p{3cm}|>{\centering\arraybackslash}p{2.5cm}|>{\centering\arraybackslash}p{4.5cm}|>{\centering\arraybackslash}p{2.3cm}|>{\centering\arraybackslash}p{1.5cm}|}
        \hline
        No. & $L_\mathrm{root}$ & \multicolumn{3}{|c|}{Embeddings} & $M$ & $T = M_\mathrm{root}$ & $\MW(\cE)$\\
        \hline
        10.1 & $E_8^{\oplus 3}$ & $E_6 \subset E_8$ & & & $E_8^{\oplus 2} \oplus A_2$ & $E_8^{\oplus 2} \oplus A_2$ & 0\\
        \hline
        10.2 & $E_8 \oplus D_{16}$ & $E_6 \subset E_8$ & & & $D_{16} \oplus A_2$ & $D_{16} \oplus A_2$ & $\ZZ/2$\\
        \hline
        10.3 & $E_7^{\oplus 2} \oplus D_{10}$ & $E_6 \subset E_7$ & & & $(-6) \oplus E_7 \oplus D_{10}$ & $E_7 \oplus D_{10}$ & $\ZZ \oplus \ZZ/2$\\
        \hline
        10.4 & $E_7 \oplus A_{17}$ & $E_6 \subset E_7$ & & & $(-6) \oplus A_{17}$ & $A_{17}$ & $\ZZ \oplus \ZZ/3$\\
        \hline
        10.5 & $E_6^{\oplus 4} $ & $E_6 \subset E_6$ & & & $E_6^{\oplus 3}$ & $E_6^{\oplus 3}$ & $\ZZ/3$\\
        \hline
        10.6 & $E_6 \oplus D_7 \oplus A_{11}$ & $E_6 \subset E_6$ & & & $D_7 \oplus A_{11}$ & $D_7 \oplus A_{11}$ & $\ZZ/4$\\
        \hline
        \end{tabular}\\

\end{table}

\end{landscape}

\newpage

\begin{proof}
    The proof of this theorem consists on a direct application of Nishiyama's method (see \cite{Nishiyama1996}). An overview of this method is described in the end of Section \ref{elliptic_fibrations_k3}, and the explicit computations for the required cases are presented in Section \ref{application_nishiyama}.
\end{proof}

\begin{remark}\label{remark_10isX3}
    The surface 10 in Table \ref{List_K3_applied_nishiyama} is denoted as the $X_3$ surface. It was first studied by Shioda and Inose in \cite{Shioda-Inose}, and subsequently by Vinberg in \cite{Vinberg}, where it was first denoted by $X_3$ and described as one of the most algebraic K3 surfaces. The $X_3$ surface is constructed as the minimal resolution of $(E_{\zeta_3} \times E_{\zeta_3})/{\rho}$, where $\zeta_3$ is a primitive cube root of unity, $E_{\zeta_3}$ the elliptic curve with torus $\CC/(\ZZ + \zeta_3\ZZ)$ and $\rho$ the automorphism given by $\rho(z_1,z_2) = (\zeta_3 z_1, \zeta_3^{-1} z_2)$. In \cite{Nishiyama1996}, the Kneser--Nishiyama method was applied to obtain a full $\cJ_2$-classification of $X_3$.
\end{remark}

\subsection{Application of the Kneser--Nishiyama Method}\label{application_nishiyama}

Table \ref{T_0-table} shows the choice of $T_0$ for each $T_X$ in Table \ref{order3_lattices}.
\begin{table}[ht]
    \centering
    \begin{tabular}{|c|c|c|c|c|}
        \hline
        No. & $T_X$ & $G_{T_X}$ & $T_0$\\
        \hline
        1 & $U \oplus U(3) \oplus A_2^{\oplus 3}$ & $(\ZZ/3\ZZ)^5$ & $E_6 \oplus A_2^{\oplus 4}$\\
        \hline
        2 & $U \oplus U(3) \oplus E_6$ & $(\ZZ/3\ZZ)^3$ & $E_6^{\oplus 2} \oplus A_2$\\
        \hline
        3 & $U^{\oplus 2} \oplus E_6$ & $\ZZ/3\ZZ$ & $E_8 \oplus E_6$\\
        \hline
        4 & $U \oplus U(3) \oplus A_2^{\oplus 2}$ & $(\ZZ/3\ZZ)^4$ & $E_6 \oplus A_2^{\oplus 3}$\\
        \hline
        5 & $U^{\oplus 2} \oplus A_2^{\oplus 2}$ & $(\ZZ/3\ZZ)^2$ & $E_8 \oplus A_2^{\oplus 2}$\\
        \hline
        6 & $U \oplus U(3) \oplus A_2$ & $(\ZZ/3\ZZ)^3$ & $E_6 \oplus A_2^{\oplus 2}$\\
        \hline
        7 & $U^{\oplus 2} \oplus A_2$ & $\ZZ/3\ZZ$ & $E_8 \oplus A_2$\\
        \hline
        8 & $U \oplus U(3)$ & $(\ZZ/3\ZZ)^2$ & $E_6 \oplus A_2$\\
        \hline
        9 & $U^{\oplus 2}$ & $\{e\}$ & $E_8$\\
        \hline
        10 & $A_2(-1)$ & $\ZZ/3\ZZ$ & $E_6$\\
        \hline
    \end{tabular}
    \caption{$T_0$ for each surface $X$}
    \label{T_0-table}
\end{table}

\begin{prop}
    For every $T_X$, $T_0$ in Table \ref{T_0-table}, $G_{T_X} = G_{T_0}$ and $q_{T_0} = q_{T_X}$.
\end{prop}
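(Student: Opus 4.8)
The plan is to verify the proposition one row of Table~\ref{T_0-table} at a time, reducing everything to the discriminant forms of the handful of indecomposable lattices that occur. Recall that the discriminant form is additive over orthogonal direct sums and that $U$ and $E_8$ are unimodular, hence contribute nothing; so the only content is the computation of $q_L$ for $L\in\{A_2,\,A_2(-1),\,E_6,\,U(3)\}$, together with some bookkeeping. Here one must be careful with conventions: in both the $T_X$ column and the $T_0$ column the summands $A_2$ and $E_6$ are the \emph{negative} definite root lattices — forced by the signature $(2,20-\rho)$ of $T_X$, and by the fact that $T_0$ must embed primitively into a negative definite Niemeier lattice — whereas $A_2(-1)$ is positive definite.

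The next step is to record the elementary computations. One has $G_{A_2}\cong\ZZ/3$ with $q_{A_2}$ taking the value $-2/3\equiv 4/3\pmod{2\ZZ}$ on a generator; $G_{E_6}\cong\ZZ/3$ with generating value $-4/3\equiv 2/3$; and $G_{A_2(-1)}\cong\ZZ/3$ with generating value $2/3$. For $U(3)$, from a basis $e_1,e_2$ with $e_i^2=0$ and $e_1\cdot e_2=3$ the dual lattice is generated by $e_1/3,e_2/3$, so $G_{U(3)}\cong(\ZZ/3)^2$, and passing to the basis $(e_1+e_2)/3,(e_1-e_2)/3$ diagonalizes $q_{U(3)}$ with values $2/3$ and $-2/3\equiv 4/3$. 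Writing $\mathfrak a$ and $\mathfrak b$ for the quadratic forms on $\ZZ/3$ with $q(1)=4/3$ and $q(1)=2/3$ respectively, this yields
\begin{equation*}
q_{A_2}\cong\mathfrak a,\qquad q_{E_6}\cong q_{A_2(-1)}\cong\mathfrak b,\qquad q_{U(3)}\cong\mathfrak b\oplus\mathfrak a\cong q_{E_6}\oplus q_{A_2}.
\end{equation*}

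Finally I would run the uniform comparison. Inspecting Table~\ref{T_0-table}, $T_0$ is obtained from $T_X$ by deleting the unimodular summands $U$ (and adjoining the unimodular $E_8$ in rows $3,5,7,9$), by replacing $U(3)$ with $E_6\oplus A_2$ whenever it occurs (rows $1,2,4,6,8$), and by replacing $A_2(-1)$ with $E_6$ in row~$10$, leaving every other $A_2$ and $E_6$ summand untouched. By additivity and the displayed identities, each of these moves preserves both $G$ and $q$, so $G_{T_X}=G_{T_0}$ and $q_{T_X}=q_{T_0}$ in every row; e.g.\ in rows $1,4,6,8$ one gets $q_{T_X}=q_{U(3)}\oplus q_{A_2}^{\oplus k}\cong q_{E_6}\oplus q_{A_2}^{\oplus(k+1)}=q_{T_0}$ with $k=3,2,1,0$, and the remaining rows are even more direct. (Equivalently, since every discriminant group here is an $\FF_3$-vector space, one may instead invoke the classification of nondegenerate quadratic forms on finite $3$-groups — determined by their length and by whether the discriminant is a square modulo $3$ — and merely check that these two invariants agree in each row.) There is no substantive obstacle here beyond bookkeeping; the only points genuinely requiring attention are the conventions — reading $A_2(-1)$ as positive definite, orienting the root lattices $A_2$ and $E_6$ as negative definite, and correctly reducing $-2/3$ and $-4/3$ modulo $2\ZZ$ — together with the diagonalization of $q_{U(3)}$, which is precisely what makes the whole table consistent.
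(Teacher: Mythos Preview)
Your proof is correct and follows essentially the same approach as the paper: both arguments use additivity of discriminant forms under orthogonal sums to strip off the unimodular pieces $U$ and $E_8$ and reduce to matching the discriminant forms of the remaining building blocks $A_2(-1)$, $U(3)$, $A_2$, $E_6$. The paper computes $q_{T_X}$ explicitly for $A_2(-1)$, $U^{\oplus2}$, and $U\oplus U(3)$ and cites \cite{Nishiyama1996} for the $T_0$ side, whereas you compute $q$ on each indecomposable summand and package the verification as three ``moves'' (delete $U$/adjoin $E_8$, replace $U(3)$ by $E_6\oplus A_2$, replace $A_2(-1)$ by $E_6$); this is organisationally slightly tidier and more self-contained, but the underlying computation is identical.
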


\begin{proof}
    Firstly, observe that for any $L_1, L_2$, $G_{L_1 \oplus L_2} = G_{L_1} \times G_{L_2}$ and $q_{L_1 \oplus L_2} = q_{L_1} + q_{L_2}$. This reduces the required calculations to the cases $T_X = A_2(-1), U^{\oplus 2}, U \oplus U(3)$ and $T_0 = E_6, E_8, E_6 \oplus A_2$, respectively. The lattices $G_{T_X}$ are given in \cite{AS}, Lemma 1.3 and Table 1, and both $G_{T_0}$ and $q_{T_0}$ can be calculated using Lemma 1.2 in \cite{Nishiyama1996}.

    For $T_X = A_2(-1)$, let $a_1, a_2$ generate $T_X$, with $a_1^2 = a_2^2 = 2$, $a_1 \cdot a_2 = -1$. Then the discriminant group $G_{T_X}$ is generated by $w = \tfrac{2}{3}a_1 + \tfrac{1}{3}a_2$, and $q_{T_X}(w) \equiv -\tfrac{4}{2} \mod 2\ZZ$.
    
    For $T_X = U^{\oplus 2}$, $G_{T_X}$ is trivial, so $q_{T_X} = 0$. 
    
    For $T_X = U \oplus U(3)$, let $u_1, u_2, u'_1, u'_2$ be its generators, with $u_i^2 = (u'_i)^2 = u_i \cdot u'_j = 0$, $u_1 \cdot u_2 = 1$, and $u'_1 \cdot u'_2 = 3$. Then $G_{T_X}$ is generated by $w_1 = \tfrac{1}{3} u'_1 + \tfrac{1}{3}u'_2$ and $w_2 = \tfrac{2}{3} u'_1 + \tfrac{1}{3} u'_2$. We check that $q_{T_X}(w_1) \equiv - \tfrac{4}{3} \mod 2\ZZ$ and $q_{T_X}(w_2) \equiv - \tfrac{2}{3} \mod 2\ZZ$. 
    
    Using Lemma 1.2 in \cite{Nishiyama1996}, we can check that the values of $G_{T_0}$ and $q_{T_0}$ agree with the ones calculated above.
\end{proof}

Next, we move to the calculation of primitive embeddings of $A^{\oplus \ell}$ into $A_n$ or $D_n$, for $\ell \geq 2$, and their orthogonal complements. Firstly, we prove the following lemma.

\begin{lema} \label{lattice_lemma}
    If $N_1, N_2$ are sublattices of a lattice $L$, $N_2$ is of root type and $N_2 \subseteq N_1^{\perp L}$, then $((N_1 \oplus N_2)^{\perp L})_\mathrm{root} = (N_2^{\perp (N_{1}^{\perp L})_\mathrm{root}})_\mathrm{root}.$
\end{lema}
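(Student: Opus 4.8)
The plan is to reduce the identity to a comparison of the norm-$\pm2$ vectors contained in the two lattices appearing on either side, since by definition $(\,\cdot\,)_\mathrm{root}$ is the sublattice generated by all such vectors.

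Write $P := N_1^{\perp L}$. First I would unwind the left-hand side: since $N_2 \subseteq P$ by hypothesis, an element of $L$ is orthogonal to $N_1 \oplus N_2$ precisely when it lies in $P$ and is orthogonal to $N_2$, so $(N_1 \oplus N_2)^{\perp L} = N_1^{\perp L} \cap N_2^{\perp L} = N_2^{\perp P}$. Thus the claim becomes $(N_2^{\perp P})_\mathrm{root} = (N_2^{\perp P_\mathrm{root}})_\mathrm{root}$. Before comparing I would note that the right-hand side is meaningful, i.e. $N_2 \subseteq P_\mathrm{root}$: as $N_2$ is of root type it is generated by its roots, and each such root is a norm-$\pm2$ vector of $P$ (because $N_2 \subseteq P$), hence lies in $P_\mathrm{root}$; so $N_2 \subseteq P_\mathrm{root} \subseteq P$.

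Next I would check that $N_2^{\perp P}$ and $N_2^{\perp P_\mathrm{root}}$ contain exactly the same roots. If $r$ is a root of $N_2^{\perp P}$, then $r \in P$, $r^2 = \pm 2$ and $r \perp N_2$; being a root of $P$ it lies in $P_\mathrm{root}$, so $r$ is a root of $N_2^{\perp P_\mathrm{root}}$. Conversely, a root $r$ of $N_2^{\perp P_\mathrm{root}}$ lies in $P_\mathrm{root} \subseteq P$, has norm $\pm 2$ and is orthogonal to $N_2$, hence is a root of $N_2^{\perp P}$. Since the two lattices have the same set of roots, the sublattices they generate agree, which is the assertion.

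The argument is essentially bookkeeping with the definitions of root, root sublattice and orthogonal complement; the one place where the hypothesis that $N_2$ is of root type is genuinely used (rather than pure formalities) is the well-definedness step $N_2 \subseteq P_\mathrm{root}$. The only recurring elementary fact to keep straight is that for sublattices $A \subseteq B$ a vector of $A$ has norm $\pm 2$ in $A$ if and only if it has norm $\pm 2$ in $B$, since the form on $A$ is the restriction of that on $B$; with this in hand I do not expect a genuine obstacle.
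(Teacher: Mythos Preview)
Your argument is correct and follows essentially the same route as the paper: both proofs first observe that $N_2 \subseteq (N_1^{\perp L})_{\mathrm{root}}$ because $N_2$ is of root type, then reduce the statement to the equality $(N_2^{\perp P})_{\mathrm{root}} = (N_2^{\perp P_{\mathrm{root}}})_{\mathrm{root}}$ with $P = N_1^{\perp L}$ and verify this by checking that the two lattices contain exactly the same roots. Your packaging via ``same set of roots'' is slightly more explicit than the paper's two-inclusion argument, but the content is the same.
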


\begin{proof}
    Since $N_2$ is of root type, $N_2 \subseteq (N_1^{\perp L})_\mathrm{root} \subseteq N_1^{\perp L}$. Then,
    \begin{equation*}
        N_2^{\perp (N_1^{\perp L})_\mathrm{root}} \subseteq N_2^{\perp (N_1^{\perp L})} = (N_1 \oplus N_2)^{\perp L}.
    \end{equation*}
    Taking the root type of both sides, we get $(N_2^{\perp (N_1^{\perp L})_\mathrm{root}})_\mathrm{root} \subseteq ((N_1 \oplus N_2)^{\perp L})_\mathrm{root}$.
    Now suppose $x \in ((N_1 \oplus N_2)^{\perp L})_\mathrm{root}$. Then, $x$ is generated by roots and $\langle x, n_1 \rangle = 0$ for all $n_1 \in N_1$, so by definition $x \in (N_1^{\perp L})_\mathrm{root}$. Since $\langle x, n_2 \rangle = 0$ for all $n_2 \in N_2$, it is true that $x \in (N_2^{\perp (N_1^{\perp L})_\mathrm{root}})_\mathrm{root}$. This gets us to the result.
\end{proof}

\begin{prop}\label{proposition-embedding}
    The primitive embeddings of $A_2^{\oplus \ell}$ in $A_n$ or $D_n$, up to an action of their Weyl group, are as follows.
    \begin{itemize}
        \item[i)] There is a unique embedding given by ${A_2^{\oplus \ell} = \bigoplus_{i=0}^{\ell-1} \langle a_{3i+1},a_{3i+2}\rangle \subset A_n}$ for $n \geq 3\ell-1$. Furthermore, the orthogonal of this embedding is 
        \begin{equation*}
            (A_2^{\oplus \ell})^{\perp A_n}_\mathrm{root} = \begin{cases}
                0 & \text{\normalfont if } 3\ell-1 \leq n \leq 3\ell,\\
                A_{n-3\ell} & \text{\normalfont if } n \geq 3\ell+1.
            \end{cases}
        \end{equation*}
        \item[ii)] There is a unique embedding given by $A_2^{\oplus \ell} = \bigoplus_{i=1}^{\ell} \langle d_{3\ell-1},d_{3\ell} \rangle \subset D_n$ for $n \geq 3\ell$. Furthermore, the orthogonal of this embedding is
        \begin{equation*}
            (A_2^{\oplus \ell})^{\perp D_n}_\mathrm{root} = \begin{cases}
                0 & \text{\normalfont if } 3\ell \leq n \leq 3\ell+1,\\
                A_1^{\oplus 2} & \text{\normalfont if } n = 3\ell+2,\\
                A_3 & \text{\normalfont if } n = 3\ell+3,\\
                D_{n-3\ell} & \text{\normalfont if } n \geq 3\ell+4.
            \end{cases}
        \end{equation*}
        
    \end{itemize}
\end{prop}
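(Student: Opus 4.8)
The plan is to work throughout in the standard coordinate models $A_n=\{x\in\ZZ^{n+1}:\sum_i x_i=0\}$ with simple roots $a_i=e_{i-1}-e_i$, and $D_n=\{x\in\ZZ^n:\sum_i x_i\in2\ZZ\}$ with simple roots $d_i=e_i-e_{i+1}$ for $i<n$ and $d_n=e_{n-1}+e_n$, recalling that $\W(A_n)$ acts as the symmetric group on the coordinates and $\W(D_n)$ as the signed permutations with an even number of sign changes. Two facts are used repeatedly. First, the image of \emph{any} embedding $A_2^{\oplus\ell}\hookrightarrow A_n$ (resp.\ $D_n$) is generated by roots, since the only norm-$2$ vectors of $A_n$ (resp.\ $D_n$) are its roots, so the images of a root basis of $A_2^{\oplus\ell}$ are roots. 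Second, all sublattices isometric to $A_2$ in $A_n$ with $n\ge2$ (resp.\ in $D_n$ with $n\ge3$) are conjugate under $\W(A_n)$ (resp.\ $\W(D_n)$): such a lattice is the span of two roots meeting at $120^\circ$, and one normalizes it to $\langle a_1,a_2\rangle$ (resp.\ to a copy supported on three coordinates, after using one pair of sign changes to put the second generator in difference form). Both are short finite checks.

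First I would establish existence and compute the orthogonal complements. In case (i) the sublattice $N:=\bigoplus_{i=0}^{\ell-1}\langle a_{3i+1},a_{3i+2}\rangle$ is supported on the first $3\ell$ coordinates, hence exists exactly when $3\ell\le n+1$; the map sending $x$ to its $\ell$ partial block-sums over the first $3\ell$ coordinates together with its last $n+1-3\ell$ coordinates has kernel $N$ and is onto $\ZZ^{n+1-2\ell}$, so $N$ is a direct summand of $\ZZ^{n+1}$ and therefore primitive in $A_n$. A vector of $A_n$ orthogonal to $N$ is constant on each of the first $\ell$ blocks of three coordinates; a root cannot have a nonzero such block, so the roots of $N^{\perp A_n}$ are exactly those supported on coordinates $3\ell,\dots,n$ with zero sum, giving $A_{n-3\ell}$ for $n\ge3\ell+1$ and $0$ for $n\in\{3\ell-1,3\ell\}$. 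Case (ii) is handled identically with $N$ the coordinate-block analogue supported on the first $3\ell$ coordinates (existing iff $3\ell\le n$); the roots of $N^{\perp D_n}$ are the roots of $D_n$ supported on the last $n-3\ell$ coordinates, and the coincidences $D_0=D_1=0$, $D_2\cong A_1^{\oplus2}$, $D_3\cong A_3$ give the four listed cases.

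For uniqueness I would induct on $\ell$, the case $\ell=1$ being the second preliminary fact. Given a primitive embedding of $A_2^{\oplus\ell}$, decompose its image as $N_1\oplus N_2$ with $N_1$ the first $A_2$-summand and $N_2\cong A_2^{\oplus(\ell-1)}$, and normalize $N_1$ to the standard copy. Since $N_2$ is of root type and lies in $N_1^{\perp L}$, Lemma~\ref{lattice_lemma} places $N_2$ inside $(N_1^{\perp L})_{\mathrm{root}}$, which the $\ell=1$ complement computation identifies with a root lattice of the form $A_{n-3}$ (resp.\ $D_{n-3}$, read through the coincidences above when $n-3\le3$) realized on the coordinates outside the support of $N_1$. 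The pointwise stabilizer of $N_1$ in $\W(L)$ contains the (signed, even) permutations of exactly those coordinates, hence surjects onto the Weyl group of this residual root lattice; applying the inductive hypothesis inside it and conjugating by such an element brings $N_2$ to standard form. The same bookkeeping shows no embedding exists once $n<3\ell-1$ (resp.\ $n<3\ell$): after normalizing $k<\ell$ of the summands the residual root lattice is $A_{n-3k}$ (resp.\ $D_{n-3k}$) and eventually contains no $A_2$.

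I expect the uniqueness step to be the main obstacle, for two reasons: one must check that the pointwise stabilizer of the standard $A_2$ genuinely surjects onto the Weyl group of the residual root lattice, so the inductive move is realized by an element of the ambient Weyl group and not merely by an abstract isometry; and the $D_n$ recursion must be run carefully, since $D_{n-3}$ degenerates to $A_3$, to $A_1^{\oplus2}$, or to $0$ for small $n-3\ell$, so it momentarily leaves the $D$-family. I would absorb the latter point by phrasing the inductive claim for an arbitrary root lattice occurring as an orthogonal summand of one of the standard coordinate models, of which $A_m$ and $D_m$ are the only instances that arise. The complement computations and the two preliminary facts are otherwise routine.
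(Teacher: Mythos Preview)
Your proposal is correct and follows essentially the same inductive strategy as the paper's proof: reduce to the case $\ell=1$, pass to the orthogonal root lattice $(N_1^{\perp L})_{\mathrm{root}}\cong A_{n-3}$ (resp.\ $D_{n-3}$, with the low-rank coincidences), and use that its Weyl group is realized inside $\W(L)$ as the stabilizer of the normalized copy; Lemma~\ref{lattice_lemma} then computes the root complement. The only differences are expository---the paper cites \cite{Nishiyama1996} for the base case and peels off $A_2^{\oplus\ell}$ before adding the last $A_2$, whereas you do the base case by hand in coordinate models and peel off one $A_2$ first---but the content is identical.
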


\begin{proof}
    We prove this by induction. The case $\ell = 1$ is proved in \cite{Nishiyama1996} (see Lemmas 4.1 and 4.2 for the primitive embeddings of $A_2$ in $A_n$ and $D_n$ respectively, and Corollary 4.4 for the orthogonal lattice). Suppose this is true for $\ell$. Then, for $n \geq 3\ell+2$, there is a unique primitive embedding of $A_2^{\oplus \ell}$ in $A_n$, and the orthogonal lattice is $A_{n-3\ell} = \langle a_{3\ell+1},{.}{.}{.}, a_n \rangle$. We know that $A_2 = \langle a_{3\ell+1}, a_{3\ell+2} \rangle$ is the unique primitive embedding of $A_2$ in $A_{n-3\ell}$, up to an action of $W(A_{n-3\ell}) \subset W(A_n)$. Glueing together both embeddings, we obtain a unique primitive embedding of $A_2^{\oplus \ell+1}$ in $A_n$ up to an action of $W(A_n)$. By Lemma \ref{lattice_lemma}, $(A_2^{\oplus \ell})^{\perp A_n}_\mathrm{root} = (A_2^{\perp A_{n-3\ell}})_\mathrm{root}$. This is equal to $0$ if $n = 3\ell+2$ or $3\ell+3$, and to $A_{n-3\ell-3}$ if $n \geq 3\ell+4$.

    For $n \geq 3\ell+3$, we know that there is a unique primitive embedding of $A_2^{\oplus \ell}$ in $D_n$, and the orthogonal is equal to $A_3$ if $n = 3\ell+3$, and to $D_{n-3\ell}$ if $n \geq 3\ell+4$. In either case, there is a unique primitive embedding of $A_2$ up to an action of the Weyl group, thus obtaining an embedding of $A_2^{\oplus \ell+1}$ in $D_n$. We can apply Lemma $\ref{lattice_lemma}$ to obtain the result.
\end{proof}

Using this result, we are able to explicitly show the generators of orthogonal complements of primitive embeddings of a lattice $N$ into a Niemeier lattice $L$, as well as their root types, when $N$ is a sum of $A_2, E_6$ and $E_8$ (see Table \ref{embeddings} in Appendix \ref{appendix}).

Finally, we move to the calculation of the torsion subgroup of the Mordell--Weil group of each fibration. In \cite{Shimada}, the possible torsion subgroups of each $ADE$-type are shown (see Table 1 on the Appendix). For most of the fibrations in Table \ref{all_fibrations}, there is only one admissible torsion subgroup, and thus there is no need to calculate $\overline{W_\mathrm{root}}/W_\mathrm{root}$ explicitly. The fibrations with multiple admissible torsion subgroups are 4.8, 6.9, 8.3, 8.6, 8.7, 9.2, 10.3 and 10.4. When $\rank \MW(\cE) = 0$, the torsion subgroup is isomorphic to $W/M$ (see \cite{Nishiyama1996}, Lemma 6.6), and the torsion can be determined by calculating $\det T_0 = \det W$ and $\det M$. This allows us to determine the torsion for fibrations 4.8, 6.9, 8.6 and 9.2. For 10.3 and 10.4, the torsion subgroups were already calculated in \cite{Nishiyama1996} (see Theorem 3.1, Table 1.1).

\begin{prop}
    The torsion subgroups of fibrations 8.3 and 8.7 in Table \ref{all_fibrations} are $\ZZ/2\ZZ$ and $0$, respectively.
\end{prop}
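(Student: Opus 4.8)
The plan is to combine Theorem~\ref{nishiyama_thm}(iii) with an explicit analysis inside the two Niemeier lattices occurring in row~8 of Table~\ref{all_fibrations}. For a fibration in that table with frame lattice $W=\phi(T_0)^{\perp L}$ and trivial lattice $T:=M_\mathrm{root}=W_\mathrm{root}$, the torsion of $\MW(\cE)$ is $\overline T/T$, where $\overline T=(T\otimes\QQ)\cap W$ is the primitive closure of $T$ in $W$. Since $\overline T$ is a sublattice of the even lattice $W$, the finite group $\overline T/T$ embeds into the discriminant group $G_T$ as an isotropic subgroup for the discriminant form $q_T$; this is the constraint I would exploit.

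First I would bound the torsion in both cases. For 8.3 we have $T=D_{10}\oplus A_5$, so $G_T\cong(\ZZ/2\ZZ)^2\oplus\ZZ/6\ZZ$ with $q_T=q_{D_{10}}\oplus q_{A_5}$; a short computation shows that the only nonzero isotropic vectors are $(s,\overline 3)$ and $(c,\overline 3)$, where $s,c$ are the two spinor classes of $G_{D_{10}}$ and $\overline 3$ is the element of order $2$ in $\ZZ/6\ZZ$, and since their sum $(v,\overline 0)$ is anisotropic there is no isotropic subgroup of order larger than $2$. For 8.7 we have $T=D_4\oplus A_{11}$, $G_T\cong(\ZZ/2\ZZ)^2\oplus\ZZ/12\ZZ$, and the only nonzero isotropic vectors are the three elements $(\delta,\overline 6)$ with $0\neq\delta\in G_{D_4}$, any two of which again sum to an anisotropic element. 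Hence in both cases $\MW(\cE)_\mathrm{tors}\in\{0,\ZZ/2\ZZ\}$, and it remains to decide which value occurs.

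To settle 8.3 I would exhibit the torsion class directly inside $L$, whose root lattice is $E_7^{\oplus 2}\oplus D_{10}$ and into which $\phi$ embeds $E_6$ in the first $E_7$ and $A_2$ in the second, so that $M=E_6^{\perp E_7}\oplus A_2^{\perp E_7}\oplus D_{10}=\langle-6\rangle\oplus A_5\oplus D_{10}$. The key point is that the nontrivial coset of $E_7^{\vee}/E_7$ is represented, inside the second $E_7$, by a vector that is orthogonal to the embedded $A_2$ and whose class in $G_{A_5}$ is exactly $\overline 3$ (one checks this using the index-$3$ glue $A_2\oplus A_5\subset E_7$). Combining such a vector with a vector in the $c$-coset of $D_{10}^{\vee}$, and using that the glue code of the Niemeier lattice with root lattice $E_7^{\oplus 2}\oplus D_{10}$ contains the vector $(0,1,c)$ in $G_{E_7}\times G_{E_7}\times G_{D_{10}}$, produces an element $\gamma\in W$ lying in $(A_5\oplus D_{10})\otimes\QQ$ with $2\gamma\in A_5\oplus D_{10}$ but $\gamma\notin A_5\oplus D_{10}$; its image in $G_T$ is $(c,\overline 3)$, so $\MW(\cE)_\mathrm{tors}=\ZZ/2\ZZ$ for 8.3.

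For 8.7 I would run the parallel argument with $L$ of root lattice $E_6\oplus D_7\oplus A_{11}$, where $\phi$ sends $E_6$ isomorphically onto the $E_6$-factor and embeds $A_2$ into $D_7$, so that $M=A_2^{\perp D_7}\oplus A_{11}$ with $(A_2^{\perp D_7})_\mathrm{root}=D_4$ (see Proposition~\ref{proposition-embedding}). A nonzero torsion section would have to arise from a glue vector of $L$ with trivial $E_6$-component, $D_7$-component the vector class, and $A_{11}$-component $\overline 6$, whose $D_7$-part can in addition be chosen orthogonal to the embedded $A_2$ and inside $D_4\otimes\QQ$; tracing this through the glue code of $E_6\oplus D_7\oplus A_{11}$ shows that no such choice is possible, so $\MW(\cE)_\mathrm{tors}=0$ for 8.7. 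The routine part of the argument is the isotropy count in $G_T$; the main obstacle — and the step most prone to error — is the glue-code bookkeeping inside the two Niemeier lattices, in particular keeping precise track of which glue vectors remain orthogonal to the embedded $E_6$ and $A_2$ after restriction, and of the classes they induce in $G_{A_5}$, respectively $G_{D_4}$, since a careless choice of coset representative can produce a spurious or a missing torsion class.
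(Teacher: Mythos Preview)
Your proposal is correct and follows essentially the same route as the paper's proof: both arguments reduce the torsion to $\{0,\ZZ/2\ZZ\}$ and then decide the case by inspecting the order-$2$ glue coset of the relevant Niemeier lattice and checking whether it admits a representative lying in $W_\mathrm{root}\otimes\QQ$.

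The only noteworthy differences are stylistic. You obtain the bound $\MW(\cE)_\mathrm{tors}\in\{0,\ZZ/2\ZZ\}$ by computing the isotropic vectors of $q_T$ in $G_T$, whereas the paper simply invokes Shimada's tables; your route is self-contained but requires a small case check, while the citation is quicker. For the decisive step, the paper writes down explicit glue vectors $\eta=\tfrac{1}{2}(e^{(2)}_1+e^{(2)}_5+e^{(2)}_7+d_1+d_4+d_6+d_8+d_{10})$ in $E_7^{\oplus 2}\oplus D_{10}$ and $\mu=\tfrac{1}{2}(d_1+d_2+a_1+a_3+\cdots+a_{11})$ in $E_6\oplus D_7\oplus A_{11}$ and checks directly whether $2\eta,2\mu$ lie in $W_\mathrm{root}$, while you phrase the same verification in terms of the glue code and the constraint that the $E_7^{(2)}$- (resp.\ $D_7$-) component lie in $A_5\otimes\QQ$ (resp.\ $D_4\otimes\QQ$). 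Your observation that the nontrivial $E_7^\vee/E_7$-class has a representative orthogonal to $A_2$ with $G_{A_5}$-image $\overline 3$ is exactly what makes the explicit $\eta$ work; conversely, the obstruction you name for 8.7 --- that no vector-class representative in $D_7^\vee$ lies in $D_4\otimes\QQ$ --- is precisely what the paper verifies by showing $2\mu\notin W_\mathrm{root}$. One small point worth making explicit when you write this up: for 8.7 the paper checks a single representative $\mu$, but the conclusion really requires ruling out every representative of the order-$2$ coset; your formulation (no representative can be chosen inside $D_4\otimes\QQ$) handles this cleanly.
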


\begin{proof}
    \sloppy
    Firstly, consider fibration 8.3. By (\cite{Shimada}, Table 1), the torsion subgroup is either $\ZZ/2\ZZ$ or $0$. We determine that the subgroup must be $\ZZ/2\ZZ$ be explicitely showing an order 2 element in $\overline{W_\mathrm{root}}/W_\mathrm{root}$. The lattice $T_0$ is embedded in the Niemeier lattice $L_\mathrm{root} = E_7^{\oplus 2} \oplus D_{10}$. Let $e^{(1)}_1,{.}{.}{.},e^{(1)}_7$, $e^{(2)}_1,{.}{.}{.},e^{(2)}_7$, $d_1,{.}{.}{.},d_{10}$ be generators for $L_\mathrm{root}$. Then, $T_0$ is embedded isomorphically onto the sublattice $\langle e^{(1)}_1, {.}{.}{.}, e^{(1)}_6 \rangle \oplus \langle e^{(2)}_2, e^{(2)}_3 \rangle$ of $L_\mathrm{root}$. We calculate $W_\mathrm{root}$ by taking the root type of the orthogonal complement of $\phi(T_0)$, obtaining $W_\mathrm{root} =$ $\langle e^{(2)}_5, e^{(2)}_6, e^{(2)}_7, (2e^{(2)}_1{+}e^{(2)}_2{+}2e^{(2)}_3{+}3e^{(2)}_4{+}2e^{(2)}_5{+}e^{(2)}_6), e^{(2)}_1 \rangle \oplus \langle d_1, {.}{.}{.}, d_{10} \rangle$. Then, $\eta = \tfrac{e^{(2)}_1 + e^{(2)}_5 + e^{(2)}_7 + d_1 + d_4 + d_6 + d_8 + d_{10}}{2}$ is an element of $L \setminus L_\mathrm{root}$ such that $2\eta \in W_\mathrm{root}$. Therefore, $\eta$ lies in the primitive closure $\overline{W_\mathrm{root}}$, and the torsion subgroup is $\ZZ/2\ZZ$.

    Now consider fibration 8.7. By (\cite{Shimada}, Table 1), the torsion subgroup is also given by either $\ZZ/2\ZZ$ or $0$. We determine that the subgroup must be trivial by showing that no element of order 2 in $L/L_\mathrm{root}$ is in $\overline{W_\mathrm{root}}$. The lattice $T_0$ is embedded in $L_\mathrm{root} = E_6 \oplus D_7 \oplus A_{11}$. Let $e_1, {.}{.}{.}, e_6$, $d_1,{.}{.}{.},d_7$, $a_1,{.}{.}{.},a_{11}$ be generators for $L_\mathrm{root}$. The sublattice of $L_\mathrm{root}$ isomorphic to $T_0$ is $\langle e_1,{.}{.}{.}, e_6 \rangle \oplus \langle d_2, d_3 \rangle$. We calculate $W_\mathrm{root} = \langle (d_1{+}d_2{+}2d_3{+}d_4),d_5,d_6,d_7\rangle \oplus \langle a_1, {.}{.}{.},a_{11}\rangle$. By \cite{niemeier}, $L/L_\mathrm{root} = \ZZ/12\ZZ$, so there is a single element of order 2 modulo $L_\mathrm{root}$. Taking $\mu = \tfrac{d_1 + d_2 + a_1 + a_3 + a_5 + a_7 + a_9 + a_{11}}{2}$ to represent this class, we see that $2\mu \not\in W_\mathrm{root}$. Consequently, the torsion subgroup is trivial.
\end{proof}

\subsection{Classification with respect to automorphisms}

In this section, we apply the results in Section \ref{K3_surfaces_ns_order3} to the fibrations in Table \ref{all_fibrations}.

\begin{prop}
    The fibrations 1.2--1.5, 1.8--1.10, 2.2, 4.2--4.6, 4.9--4.11, 5.3, 6.3--6.7, 6.10--6.12, 7.2, 8.2--8.5, 8.7, 8.8, 9.2, 10.2, 10.3, 10.4, 10.6 in Table \ref{all_fibrations} are of type 2 with respect to any non-symplectic automorphism $\sigma'$ of order 3 fixing their fibre class.
\end{prop}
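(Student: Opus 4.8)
The plan is to use the dichotomy from Definition~\ref{classification}: since $\sigma'$ fixes the class $[F]$, the fibration $\cE$ is either of type 1 or of type 2 with respect to $\sigma'$, so it suffices to exclude type 1 in every listed case. The key input is Proposition~\ref{condition_type_1}: if $\cE$ were of type 1 with respect to a non-symplectic automorphism of order $3$, then all its singular fibres would be of Kodaira type $I_0^*, II, IV, II^*$ or $IV^*$; equivalently $j(\cE)\equiv 0$, and the $ADE$-type $T$ of $\cE$ would be an orthogonal sum of copies of $D_4, A_2, E_6$ and $E_8$.

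First I would read off from Table~\ref{all_fibrations} the $ADE$-type of each fibration in the statement. For every one of them except $1.2$, $1.4$ and $1.9$, the lattice $T$ contains a summand which is not among $D_4, A_2, E_6, E_8$ (an $A_5, A_8, A_{11}, A_{14}$ or $A_{17}$; a $D_7, D_{10}, D_{13}$ or $D_{16}$; or an $E_7$), so $\cE$ cannot be of type 1 and is therefore of type 2. The remaining cases $1.2$, $1.4$, $1.9$ all have $T=D_4\oplus A_2$ and $\rank\MW(\cE)=4$; here $T$ itself is compatible with $j\equiv 0$ (the $A_2$ could a priori come from an additive fibre of type $IV$), and this is the point where more than the $\cJ_2$-data is needed. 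For these I would argue as follows: because $\sigma$ acts trivially on $\NS X$, Remark~\ref{no_type_3} rules out type 3 and Proposition~\ref{type_1_rank_0} rules out type 1 for $\sigma$ (as $\rank\MW(\cE)=4\neq 0$), so $\cE$ is of type 2 with respect to $\sigma$; then Proposition~\ref{basechange_R} presents $\cE$ as a cubic base change of a rational elliptic surface, and Proposition~\ref{fibres_type_2} together with the corollary following it shows that $\cE$ has exactly two reducible fibres $F_a^X$ and $F_b^X$, one of type $I_0^*$ (producing the $D_4$) and the other producing the $A_2$. Since the lists in Proposition~\ref{fibres_type_2}(i)--(ii) contain no fibre of type $IV$, the $A_2$-fibre must be of type $I_3$; hence $j(\cE)$ is not identically $0$ and, by Proposition~\ref{condition_type_1}, $\cE$ is not of type 1 with respect to $\sigma'$ either. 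In all cases $\cE$ is of type 2.

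The routine part is the case-by-case reading of $T$ from the table; the one genuinely delicate step is the treatment of $1.2$, $1.4$ and $1.9$, where I must distinguish the $I_3$ and $IV$ realisations of the $A_2$ summand, and the only way I see to do so is to pass through the base-change description of type 2 fibrations rather than stay within lattice theory.
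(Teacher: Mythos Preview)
Your proof is correct and takes a genuinely different route from the paper's. The paper splits the list by Mordell--Weil rank: for every listed fibration except $9.2$, $10.2$ and $10.6$ it observes $\rank\MW(\cE)>0$ and invokes Proposition~\ref{type_1_rank_0} to exclude type~1; for the three remaining rank-zero cases it points to the presence of an $I_n^*$ fibre with $n>0$ and uses Proposition~\ref{condition_type_1}. You instead split by the $ADE$-type, using Proposition~\ref{condition_type_1} for the bulk of the list and reserving extra work only for the three cases $1.2$, $1.4$, $1.9$ with $T=D_4\oplus A_2$. Your bootstrap for these---first establishing type~2 with respect to the trivially-acting $\sigma$ via rank, then reading off from Proposition~\ref{fibres_type_2} and its corollary that the $A_2$-fibre is necessarily $I_3$ rather than $IV$, hence $j\not\equiv 0$---is sound and yields a statement about the fibration itself that then applies against any $\sigma'$.

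Your approach buys something the paper's does not make explicit: Proposition~\ref{condition_type_1} carries no hypothesis on the action of $\sigma'$ on $\NS X$, whereas Proposition~\ref{type_1_rank_0} does require $\sigma'$ to act trivially there. Since the statement is phrased for \emph{any} non-symplectic $\sigma'$ of order~3 fixing the fibre class (and Section~\ref{other_aut_X3} exhibits such automorphisms not acting trivially), your argument covers the claim in its stated generality more transparently. The paper's argument, on the other hand, is shorter and avoids the $I_3$/$IV$ distinction in the $D_4\oplus A_2$ cases, at the cost of tacitly leaning on the triviality hypothesis.
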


\begin{proof}
    Since $\sigma'$ fixes the fibre class, the fibrations are of type 1 or 2 in $(X,\sigma')$. All listed fibrations apart from 9.2, 10.2 and 10.6 have positive rank, so by Proposition \ref{type_1_rank_0} they cannot be of type 1. The remaining fibrations have fibres of type $I_n^*$ for $n > 0$, so by Proposition \ref{condition_type_1}, they also cannot be of type 1.
\end{proof}

\begin{prop}
    The fibrations 1.6, 2.1, 2.3, 3.1, 4.1, 4.7, 5.1, 5.2, 6.1, 6.2, 6.8, 6.9, 7.1, 8.1, 8.6, 9.1, 10.1 in Table \ref{all_fibrations} are of type 1 with respect to any non-symplectic automorphism $\sigma'$ of order 3 fixing their fibre class.
\end{prop}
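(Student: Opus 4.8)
The plan is to show that each of the listed fibrations fails both obstructions to being of type~1 in the opposite direction: none of them can be of type~2 once $\sigma'$ fixes the fibre class, so they must be of type~1. Since $\sigma'$ fixes the fibre class by hypothesis, each fibration is of type~1 or type~2 in $(X,\sigma')$ by definition. It therefore suffices to rule out type~2.

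First I would recall from Proposition~\ref{fibres_type_2} the rigid constraints on the singular fibres of a type~2 fibration: exactly two fibres $F^X_a$, $F^X_b$ are preserved by $\sigma'$ (with $F^X_a$ of type $I_0$ or $I^*_n$ and $F^X_b$ of type $I_0^*$, $III^*$ or $I_m$), and \emph{every} other singular fibre occurs in an orbit of three fibres of identical Kodaira type, restricted to the short list in item~(iii). In particular, for a type~2 fibration, the $ADE$-type $T$ must decompose as $T_a \oplus T_b \oplus (T_u^{\oplus 3})$ where $T_a$ is the $ADE$-type of an $I_0$ or $I^*_n$ fibre ($0$ or $D_{n+4}$), $T_b$ is that of an $I_0^*$, $III^*$ or $I_m$ fibre ($D_4$, $E_7$, or $A_{m-1}$), and $T_u$ is the $ADE$-type of a fibre from list~(iii). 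The key step is then a case-by-case inspection: for each fibration in the statement, read off $T$ from Table~\ref{all_fibrations} and verify that no such decomposition exists. For instance, for $3.1$ we have $T = E_8 \oplus A_2$; since $E_8$ is not among the types $0, D_{n+4}, D_4, E_7, A_{m-1}$ allowed for $T_a$ or $T_b$ and cannot be $T_u^{\oplus 3}$, no type~2 decomposition is possible, so $3.1$ is of type~1. Similarly $E_8$-summands kill $5.1$, $6.1$, $9.1$, $10.1$, $8.1$; pure $E_6$/$A_2$ configurations with the wrong multiplicities (e.g. $E_6^{\oplus 2}$ in $5.2$, $E_6 \oplus A_2^{\oplus 2}$ in $1.6$, $2.1$, $2.3$) rule out the rest, since an $E_6$ summand can only sit in an orbit triple (forcing $E_6^{\oplus 3}$) and never appears as $T_a$ or $T_b$.

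A complementary and cleaner argument, which I would run in parallel as a sanity check, is via $\MW(\cE)$: every fibration in the list has $\MW(\cE)$ either $0$ or $\ZZ/3$, hence Mordell--Weil rank $0$. A type~2 fibration arising as a cubic base change of a rational elliptic surface (Proposition~\ref{basechange_R}) typically inherits positive Mordell--Weil rank from the base surface; more to the point, one checks that for the allowed fibre configurations of Proposition~\ref{fibres_type_2} the Shioda--Tate count forces $\rank\MW > 0$ unless the base rational elliptic surface has maximal reducible fibres, which is incompatible with the $ADE$-types appearing here. In any case, combining the fibre-type obstruction with the rank obstruction eliminates type~2 for every listed fibration.

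The main obstacle will be making the case analysis genuinely exhaustive rather than merely illustrative: one must be careful that the decomposition $T = T_a \oplus T_b \oplus T_u^{\oplus 3}$ is checked against \emph{all} ways of splitting the given root lattice, including the subtle coincidences where a single Kodaira type corresponds to two $ADE$-types (e.g. $A_2$ may come from $I_3$ or $IV$, both of which are permissible for $T_u$). I would organize this as a short table paralleling Table~\ref{all_fibrations}, listing for each entry the reason type~2 fails (an $E_8$ summand, an $E_6$ summand not occurring in a triple, or an $A_n$/$D_n$ summand of forbidden index), and conclude that each is therefore of type~1.
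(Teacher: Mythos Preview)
Your main argument is correct and is essentially the same as the paper's: use Proposition~\ref{fibres_type_2} to show that the $ADE$-type of each listed fibration is incompatible with type~2, hence the fibration is of type~1. The paper simply states this more concisely: every listed fibration has either a fibre of type $II^*$ (an $E_8$ summand, forbidden by item~(iv)) or a number of $IV^*$ fibres (the $E_6$ summands) not divisible by~3 (forbidden since $IV^*$ appears only in item~(iii) and hence only in triples). This single observation already covers all seventeen cases without the full $T_a \oplus T_b \oplus T_u^{\oplus 3}$ bookkeeping you propose.

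Your ``complementary'' Mordell--Weil argument, however, is both unnecessary and not rigorous as stated. The claim that a type~2 fibration ``typically inherits positive Mordell--Weil rank'' is vague, and the appeal to Shioda--Tate on the base rational surface would require its own case analysis to be made precise. More importantly, the statement concerns an arbitrary $\sigma'$ fixing the fibre class, not necessarily acting trivially on $\NS X$, so rank arguments in the style of Proposition~\ref{type_1_rank_0} do not apply directly. Drop this paragraph; the fibre-type obstruction is already a complete proof.
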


\begin{proof}
    All of the fibrations listed either have a fibre of type $II^*$ or a non-multiple of 3 number of fibres of type $IV^*$. By Proposition \ref{fibres_type_2}, they cannot be of type 2. Since $\sigma'$ fixes their fibre class, they must be of type 1 in $(X,\sigma')$.
\end{proof}

The fibrations that remain to be classified are 1.1, 1.7, 4.8, and 10.5.

\begin{prop}
Fibrations 1.1, 1.7 and 4.8 are of type 1 with relation to any non-symplectic automorphism $\sigma'$ fixing their fibre class.
\end{prop}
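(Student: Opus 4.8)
The plan is to show that none of these three fibrations can be of type 2 with respect to $\sigma'$. Since $\sigma'$ fixes the fibre class $[F]$, each of them is of type 1 or of type 2 by Definition \ref{classification}, so ruling out type 2 gives the claim.

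First I would dispose of fibrations 1.1 and 1.7, both of $ADE$-type $A_2^{\oplus 5}$. Suppose one of them were of type 2 with respect to $\sigma'$. By Proposition \ref{fibres_type_2} its reducible fibres are the two $\sigma'$-fixed fibres $F^X_a$ and $F^X_b$ together with $\sigma'$-orbits of three fibres of a common Kodaira type. Going through the lists in Proposition \ref{fibres_type_2}: a fibre of type $I^*_n$ contributes a $D$-summand, and one of type $III^*$ or $IV^*$ an $E$-summand, while an orbit fibre of type $III$, $I_2$, $I_4$, $I_5$ or $I_6$ contributes an $A_1$, $A_1$, $A_3$, $A_4$ or $A_5$ summand; none of these occurs in $A_2^{\oplus 5}$. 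Hence $F^X_a$ is of type $I_0$, $F^X_b$ is of type $I_0$ or $I_3$, and every orbit fibre is of type $II$, $I_0$, $I_1$, $IV$ or $I_3$. Counting $A_2$-summands, the total is a multiple of $3$ plus $0$ or $1$ (the extra $A_2$ coming from $F^X_b$ when it is of type $I_3$), hence never equal to $5$; contradiction.

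Fibration 4.8 has $ADE$-type $A_2^{\oplus 6}$ and $\MW(\cE) = \ZZ/3$, so the previous count is inconclusive, and I would instead use the base-change description. Assume 4.8 is of type 2. By Proposition \ref{basechange_R} it is the base change of a rational elliptic surface $\cE_R \colon R \to \PP^1$ along a cyclic degree-$3$ cover $\tau$ ramified over two points $a$ and $b$. As in the previous paragraph $X$ has no $I^*_n$, $III^*$ or $IV^*$ fibre, so combining Proposition \ref{fibres_type_2}, Proposition \ref{basechange_is_k3} and the base-change table (\cite{miranda}, Table VI.4.1), one of $F^R_a$, $F^R_b$ is of type $IV$ (giving a smooth fibre $I_0$ on $X$) and the other is of type $I_0$, while each remaining reducible fibre of $\cE_R$ is of type $IV$ or $I_3$ and yields a $\sigma'$-orbit of three such fibres on $X$. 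Since those orbits must account for all of $A_2^{\oplus 6}$, there are exactly two of them, so $\cE_R$ has $ADE$-type $A_2^{\oplus 3}$; by the Shioda--Tate formula for rational elliptic surfaces, $\rank \MW(\cE_R) = 8 - 6 = 2$. But restriction of the generic fibre of $\cE_R$ to the degree-$3$ function-field extension embeds $\MW(\cE_R)$ into $\MW(\cE)$, so $\rank \MW(\cE) \geq 2$, contradicting $\MW(\cE) = \ZZ/3$. Hence 4.8 is not of type 2, and therefore it is of type 1.

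The routine parts are the bookkeeping of which Kodaira fibres contribute which $ADE$-summands, together with the relevant entries of the base-change table. The only real obstacle is fibration 4.8: the elementary modulo-$3$ obstruction disappears, and one has to descend to the rational elliptic surface $R$, determine its fibre configuration exactly, and read off the Mordell--Weil rank; some care is needed to check that every case for the pair $(F^R_a, F^R_b)$, up to the symmetry $a \leftrightarrow b$, forces $\cE_R$ to have $ADE$-type $A_2^{\oplus 3}$.
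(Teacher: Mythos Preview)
Your proposal is correct and follows essentially the same approach as the paper: for 1.1 and 1.7 you both use a modulo-$3$ count of the $A_2$-summands against the constraints of Proposition \ref{fibres_type_2}, and for 4.8 you both descend via Proposition \ref{basechange_R} to the rational elliptic surface, identify its $ADE$-type as $A_2^{\oplus 3}$, and derive a rank contradiction by lifting sections. Your phrasing of the $A_2$-count is in fact slightly cleaner than the paper's (you do not separate $IV$ from $I_3$), and your closing remark about checking all cases for $(F^R_a,F^R_b)$ correctly flags the one point both arguments leave implicit, namely ruling out $F^R_b$ of type $I_1$ (which would force five orbit-$A_2$'s and is excluded by the same count).
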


\begin{proof}
    For fibrations 1.1 and 1.7, $T = A_2^{\oplus 5}$. Therefore, they have $n$ fibres of type $IV$ and $5-n$ fibres of type $I_3$. Assuming the fibration is of type 2, by Proposition \ref{fibres_type_2} $n$ is equal to either $0$ or $3$. Then, we there are $2$ or $5$ fibres of type $I_3$, which is not possible.

    Let $\cE$ be a Jacobian elliptic fibration of number 4.8. Then, $T = A_2^{\oplus 6}$, so $\cE$ has 6 reducible fibres of type $IV$ or $I_3$. Assume $\cE$ is of type 2 in $(X,\sigma')$, and let $\cE_R \colon R \to \PP^1$ be the rational elliptic surface and $\tau$ the base change determined by Proposition \ref{basechange_R}. Then, if $\tau$ is ramified at $a,b \in \PP^1$, the fibre $F_a$ is of type $IV$ and $F_b$ is a smooth fibre (see Proposition \ref{basechange_is_k3}). Furthermore, there are two other reducible fibres $F_{u_1}$ and $F_{u_2}$ of type $IV$ or $I_3$. By the Shioda--Tate formula (\cite{shioda1989mordell}, Theorem 1.1), the rank of $\cE_R$ is equal to 2. This is not possible since the sections of $\cE_R$ lift to $\cE$, and $\cE$ has rank 0. 
\end{proof}

The remaining fibration, namely number 10.5 in Table \ref{all_fibrations}, is treated in Section \ref{section_X3_surface}. We see that it is the only fibration that can be of type 1 with relation to one automorphism (see Theorem \ref{main_classification}) and of type 2 with relation to another (see Proposition \ref{ce5_type_2}).

\begin{remark}
    Notice that the surface number 3 in Table \ref{K3_surfaces_ns_order3} only admits one fibration, which is of type 1. This is consistent with the fact that the automorphism fixes a curve of genus 2 (cf. Proposition \ref{fixes_high_genus}). This is the only such case in Table \ref{List_K3_applied_nishiyama}.
\end{remark}

\section{Elliptic fibrations on $X_3$}\label{section_X3_surface}

In this section, we apply the classification method in Section \ref{K3_surfaces_ns_order3} to the $X_3$ surface (see \ref{remark_10isX3}). We have listed all Jacobian elliptic fibration of $X_3$ modulo $\cJ_2$-equivalence (see surface 10 in Table \ref{List_K3_applied_nishiyama}). These were originally calculated in \cite{Nishiyama1996} (see Remark \ref{remark_10isX3}). Furthermore, Corollary D of \cite{BKW} shows that $\cJ_1(X_3) = \cJ_2(X_3)$, that is, there is exactly one Jacobian elliptic fibration on $X_3$ in each $\cJ_2$-class modulo $\Aut(X_3)$. While these elliptic fibrations have been known in the literature (see \cite{Nishiyama1996}, \cite{Shioda-Inose}, \cite{Vinberg}), this method allows us to find explicit Weierstrass equations for each class. We start in Section \ref{section_X} by constructing $X_3$ as a base change of a rational elliptic surface $R$ by a cubic Galois cover, which endows the K3 surface with a non-symplectic automorphism $\sigma$. In Section \ref{section_ell_divisors}, we provide a Jacobian elliptic fibration in each class of $\cJ_1(X)$ by describing the fibre class and a section, and apply Theorem \ref{class_linearsys} to classify each in relation to $\sigma$. In Section \ref{section_pencils}, we apply Propositions \ref{equation_type2} and \ref{equation_type1} to provide Weierstrass equation for each fibration. In Section \ref{other_aut_X3}, we discuss different constructions of $X_3$ as base changes of rational elliptic surfaces, examining when the non-symplectic automorphism produced is equal to or distinct from $\sigma$, and providing an example of a Jacobian elliptic fibrations of type 3.

\subsection{Construction of $X_3$}\label{section_X}

In this section, we construct $X_3$ as the base change of a specific rational elliptic surface $R$. There are distinct possibilites for rational elliptic surfaces and base changes resulting in $X_3$. Here, we choose $R$ to be the rational elliptic surface obtained by the pencil of cubics $\Lambda = s\cF + t\cG$ in $\PP^2$, where $\cF$ is given by $xyz = 0$ and $\cG$ by $(x-y)(y-z)(z-x) = 0$ (see Proposition \ref{RES}).

\begin{center}
\begin{figure}[ht]
    \centering
    \tikzfig{R_pencil} \begin{tabular}{cc|c}
    $\ell_1:$ & $x = 0$ & $P_1 = [1{:}0{:}0]$\\
    $\ell_2:$ & $y = 0$ & $P_2 = [0{:}1{:}0]$\\
    $\ell_3:$ & $z = 0$ & $P_3 = [0{:}0{:}1]$\\
    $r_1:$ & $y-z = 0$ & $Q_1 = [0{:}1{:}1]$\\
    $r_2:$ & $z-x = 0$ & $Q_2 = [1{:}0{:}1]$\\
    $r_3:$ & $x-y = 0$ & $Q_3 = [1{:}1{:}0]$\\
    \ && $O_{\ } = [1{:}1{:}1]$
\end{tabular}
    \caption{Cubics generating $\Lambda$}
    \label{R_pencil}
\end{figure} 
\end{center}

\sloppy
The base points of $\Lambda$ lie on the scheme theoretic intersection $\cF \cap \cG,$ and consist of $P_1,P_2,P_3,Q_1,..,Q_6$. The points $Q_4, Q_5, Q_6$ are infinitely near to $P_1, P_2, P_3$, respectively, and correspond to the tangent directions of $r_1, r_2, r_3$. Blowing up the base points, we obtain the rational elliptic surface $R$, whose only reducible fibres are $F_a := \cE_R^{-1}([0{:}1])$ of type $IV$ and $F_b := \cE_R^{-1}([1{:}0])$ of type $I_6$, i.e. the strict transforms of $\cF$ and $\cG$. The exceptional divisors $H_1,...,H_6$ above $Q_1,...,Q_6$ determine sections of $\cE_R$.

\begin{figure}[ht]
    \centering
    \tikzfig{R_fibres}
    \caption{Reducible fibres of $\cE_R$}
    \label{R_fibres}
\end{figure}

The curves in Figure \ref{R_fibres} have self-intersections $\ell_i^2 = E_i^2 = r_i^2 = -2$, and $H_i^2= -1$. By \cite[Main Theorem]{Shioda-Oguiso}, the Mordell--Weil group of $\cE_R$ is $\MW(\cE_R) = \ZZ \oplus \ZZ/3$.

We apply the base change by the cubic Galois cover ${\tau \colon \PP^1 \to \PP^1}$ totally ramified at $[0{:}1]$ and $[1{:}0]$, obtaining a K3 surface with Jacobian elliptic fibration $\cE_X:X \rightarrow \PP^1$ (see \ref{basechange_is_k3}). The fibres above the ramification points of $\tau$ are $F^X_a := \cE_X^{-1}([0{:}1])$ of type $I_0$, and $F^X_b := \cE_X^{-1}([1{:}0])$ of type $I_{18}$, and every other fibre is irreducible (see \cite{miranda}, Table VI.4.1).

\begin{prop}\label{X-X_3-isom}
    $X$ is isomorphic to $X_3$.
\end{prop}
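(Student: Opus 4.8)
The plan is to show that $X$ is a singular K3 surface whose transcendental lattice is isometric to $A_2(-1)$, and then to identify $X$ with $X_3$ via the Shioda--Inose classification of singular K3 surfaces. That $X$ is a K3 surface is already contained in Proposition \ref{basechange_is_k3}, since the base change is taken over the fibre $F_a$ of type $IV$ and the fibre $F_b$ of type $I_6$, which is an admissible combination. To compute $\rho(X)$, observe that $\cE_X$ has a single reducible fibre, $F^X_b$ of type $I_{18}$ (while $F^X_a$ is of type $I_0$ and every other fibre is irreducible), so its trivial lattice is $U\oplus A_{17}$ and the Shioda--Tate formula gives $\rho(X)=19+\rank\MW(\cE_X)$. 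Since $\MW(\cE_X)$ and $\MW(\cE_R)$ are the groups of rational points of the generic fibre over $k(t)$ and $k(t^3)$ respectively, the inclusion $k(t^3)\subset k(t)$ induces an embedding $\MW(\cE_R)=\ZZ\oplus\ZZ/3\hookrightarrow\MW(\cE_X)$, so $\rank\MW(\cE_X)\ge 1$; as $\rho(X)\le h^{1,1}(X)=20$, we conclude $\rho(X)=20$, and hence $T_X$ has rank $2$.

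Next I would pin down $T_X$. Since $\sigma$ is non-symplectic of order $3$, $\sigma^{*}$ acts on $H^{2,0}(X)$ by a primitive cube root of unity, and because $\rho(X)=20$ we have $T_X\otimes\CC=H^{2,0}(X)\oplus H^{0,2}(X)$; thus $\sigma^{*}$ acts on the rank-$2$ positive definite lattice $T_X$ with characteristic polynomial $x^{2}+x+1$. This makes $T_X$ a rank-$1$ module over $\ZZ[\zeta_3]$, so $T_X\cong A_2(m)$ for some integer $m\ge1$, and $|\disc T_X|=3m^{2}$. To see $m=1$ I would compute $|\disc\NS X|=|\disc T_X|$ from the fibration $\cE_X$. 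First, $\MW(\cE_X)_{\mathrm{tors}}$ injects into $G_{A_{17}}=\ZZ/18$, so it is cyclic and contains $\ZZ/3$; but a torsion section is disjoint from the zero section, so the height-zero condition forces its contribution at the $I_{18}$ fibre to equal $\tfrac{j(18-j)}{18}=2\chi(\cO_X)=4$, i.e.\ $j\in\{6,12\}$, which has order $3$ in $\ZZ/18$. Hence $\MW(\cE_X)_{\mathrm{tors}}=\ZZ/3$. Second, a generator of the free part of $\MW(\cE_R)$ has height $\tfrac12$ (the Mordell--Weil lattice of $\cE_R$ has discriminant $\tfrac{|\disc\NS R|\cdot|\MW(\cE_R)_{\mathrm{tors}}|^{2}}{|\disc(U\oplus A_2\oplus A_5)|}=\tfrac9{18}$, using that $\NS R$ is unimodular), and the Shioda height pairing is multiplied by $\deg\tau=3$ under base change, so its image in $\MW(\cE_X)$ has height $\tfrac32$. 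A proper multiple of a free generator of $\MW(\cE_X)$ would make $\disc\NS X=\tfrac{|\disc(U\oplus A_{17})|\cdot\disc\MW(\cE_X)}{|\MW(\cE_X)_{\mathrm{tors}}|^{2}}$ non-integral, so that image must itself be a generator; hence $\disc\MW(\cE_X)=\tfrac32$ and $|\disc\NS X|=\tfrac{18\cdot 3/2}{9}=3$. Therefore $m=1$ and $T_X\cong A_2(-1)$.

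Finally, by the Shioda--Inose theorem (\cite{Shioda-Inose}) a singular K3 surface is determined up to isomorphism by its transcendental lattice, and $X_3$ is the one with transcendental lattice $A_2(-1)$ (cf.\ Table \ref{List_K3_applied_nishiyama}); hence $X\cong X_3$. The main obstacle is the discriminant computation of the second paragraph: one must control $\MW(\cE_X)$ — its rank, its torsion, and the height of a free generator — precisely enough to evaluate $\disc\NS X$, the delicate point being that a degree-$3$ base change can a priori enlarge the Mordell--Weil group beyond the image of $\MW(\cE_R)$, so this must be excluded (which the integrality of $\disc\NS X$ does). An alternative to the last two paragraphs would be to compute the weight-two Hodge structure on $T_X$ directly and invoke the Torelli theorem, but this still requires identifying $T_X$ as a lattice.
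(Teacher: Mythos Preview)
Your proof is correct and arrives at the same endpoint as the paper, but by a genuinely different computation. The paper argues more tersely: it notes that $\cE_X$ has $ADE$-type $A_{17}$, that $\rank\MW(\cE_X)=1$ by Shioda--Tate together with $\rho(X)\le 20$ and the sections lifted from $R$, and that the torsion must be $\ZZ/3$ by appeal to Shimada's tables (which leave only $0$ or $\ZZ/3$ as options) plus the lifted torsion from $\cE_R$. It then matches this against fibration 10.4 on $X_3$, concludes $\NS X\cong\NS X_3$, and invokes the Global Torelli Theorem. Your route is more self-contained: you bypass Shimada's tables by computing the torsion directly from the height formula, you determine the height of a free generator by tracking how the pairing scales under the degree-$3$ base change and using integrality of $\disc\NS X$, and you exploit the $\ZZ[\zeta_3]$-structure on $T_X$ coming from $\sigma^{*}$ to force $T_X\cong A_2(m)$, pinning down $m=1$ from the discriminant. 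You then finish via Shioda--Inose rather than Torelli, which is the sharper statement in the singular case. What your approach buys is an explicit discriminant computation and independence from external classification tables; what the paper's buys is brevity. One small point: your claim that the Shioda height is multiplied by $\deg\tau$ under base change is standard but would benefit from a reference (e.g.\ \cite{schutt-shioda}), since it underpins the identification of the free generator.
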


\begin{proof}
    In Table \ref{List_K3_applied_nishiyama}, we see that the fibration 10.4 on $X_3$ has the same $ADE$-type as $\cE_X$ and its Mordell--Weil group is equal to $\ZZ \oplus \ZZ/3\ZZ$. By the Shioda-Tate formula, the rank of $\MW \cE_X$ is 1, and by Table 1 in \cite{Shimada} the torsion part is either trivial or equal to $\ZZ/3\ZZ$. Since $\cE_R$ has torsion sections which are lifted by the base change, we conclude that Mordell--Weil group of $\cE_X$ is also equal to $\ZZ \oplus \ZZ/3\ZZ$. Since both fibrations have the same reducible fibres and Mordell--Weil groups, by Theorem 1.1 in \cite{shioda1989mordell}, the Néron--Severi groups of $X$ and $X_3$ are isomorphic. Then, as a consequence of the Global Torelli Theorem for K3 surfaces (\cite{Piatetskii-Shafarevich}, Section 1, \cite{huybrechts}, Theorem 5.3), $X$ and $X_3$ are isomorphic.
\end{proof}

In view of this proposition, we refer to $X_3$ by $X$ through the rest of this section. Notice that since $\tau$ is a Galois covering, the base change induces a non-symplectic automorphism $\sigma$ in $\Aut(X)$ of order 3.

\begin{figure}[ht]
    \centering
    \tikzfig{X_fibres_action}
    \caption{Ramified fibres of $\cE_X$}
    \label{X_fibres}
\end{figure}

Figure \ref{X_fibres} describes the fibres $F^X_a$ and $F^X_b$ of $\cE_X$. The curves $\Theta_i$ are the components of $F^X_b$ and $\Sigma_i$ are the sections inherited from $H_i$ in $R$. The self-intersections of the portrayed curves are $\Theta_i^2 = \Sigma_i^2 = -2$, $(F_a^{X})^2 = 0$. The action of $\sigma$ is also described (see Remark \ref{remark_action_X_fibres}). 

\begin{prop}
    The automorphism $\sigma$ acts trivially on $\NS X$.
\end{prop}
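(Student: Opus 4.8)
The plan is to produce, inside $\NS X$, a sublattice of finite index on which $\sigma^{*}$ acts as the identity, and then upgrade this to triviality on all of $\NS X$ by a rank-and-eigenvalue argument. The natural candidate for the fixed sublattice is the trivial lattice $\Triv(\cE_X)$ of the fibration $\cE_X$ constructed above.

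First I would list the classes that $\sigma$ visibly fixes. The fibre class $F$ of $\cE_X$ is fixed, since $\cE_X$ arises from a base change and is therefore of type $2$. The class $O$ of the zero-section is fixed, because the zero-section of $\cE_X$ is the pullback of the zero-section of $\cE_R$ and is sent to itself by the deck transformation $\sigma$. It remains to treat the classes $\Theta_0,\dots,\Theta_{17}$ of the components of the unique reducible fibre $F_b^X$ of type $I_{18}$. Here I would use the description of $\sigma$ on $F_b^X$ from Figure \ref{X_fibres}: the $I_{18}$ fibre is obtained from the $I_6$ fibre $F_b$ of $\cE_R$ by resolving, at each of its six nodes, an $A_2$-singularity of $R\times_{\PP^1}\PP^1$, thereby inserting a chain of two $(-2)$-curves per node. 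Over the ramification point $[1{:}0]$ the base-changed surface restricts to $F_b$ with $\sigma$ acting as the identity, so $\sigma$ fixes each of the six original components and fixes each of the six nodes; hence $\sigma$ maps each node's exceptional pair to itself, and since $\sigma$ has order $3$ it cannot interchange the two curves of a pair. Thus $\sigma$ fixes all $18$ components individually and $\sigma^{*}\Theta_i=\Theta_i$ for all $i$.

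Consequently $\sigma^{*}$ restricts to the identity on $\Triv(\cE_X)=\langle F,O\rangle\oplus\langle\Theta_1,\dots,\Theta_{17}\rangle$, a lattice of rank $2+17=19$. Since $\NS X=U\oplus E_8^{\oplus2}\oplus A_2$ has rank $20$, the fixed subspace of $\sigma^{*}$ in $\NS X\otimes\QQ$ has dimension at least $19$, so a $\sigma^{*}$-invariant complement has dimension at most $1$. On that complement $\sigma^{*}$ has order dividing $3$ and no eigenvalue $1$, so its minimal polynomial divides $x^{2}+x+1$, which is irreducible over $\QQ$; hence the complement is a $\QQ(\zeta_3)$-vector space and in particular has even $\QQ$-dimension. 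An even dimension at most $1$ is $0$, so $\sigma^{*}$ acts trivially on $\NS X\otimes\QQ$, and therefore on $\NS X$.

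The only genuinely geometric input is the action of $\sigma$ on the components of $F_b^X$, and this is the step I expect to require the most care: one has to be certain that, in passing from $R\times_{\PP^1}\PP^1$ to $X$ by resolving the $A_2$-singularities over $[1{:}0]$ and contracting $(-1)$-curves, $\sigma$ is not allowed to permute fibre components nontrivially. The combination of triviality of $\sigma$ on the fibre of $R\times_{\PP^1}\PP^1$ over the ramification point with the order-$3$ constraint rules this out. As a consistency check, the fixed locus produced this way — the six original rational components of the hexagon $I_6$ as fixed curves, together with the six nodes of the hexagon and the three fixed points of $\sigma$ on the smooth fibre $F_a^X$ as isolated fixed points — matches the data $k=6$, $g=0$, $n=9$ recorded for surface $10$ in Table \ref{List_K3_applied_nishiyama}.
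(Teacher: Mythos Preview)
Your proof is correct. The core geometric step—showing that $\sigma$ preserves each component $\Theta_i$ of the $I_{18}$ fibre—is handled much as in the paper: both arguments first observe that the six components $\Theta_{3k}$ lifted from the $I_6$ hexagon of $R$ are fixed (since $\sigma$ is the identity on the fibre over the ramification point of $\tau$), and then pin down the remaining twelve exceptional components. The paper does this by noting that the intersection point of $\Theta_i$ with its neighbouring $\Theta_{3k}$ is fixed, which forces $\sigma(\Theta_i)=\Theta_i$; you instead argue that the pair of exceptional curves over each node is $\sigma$-stable and cannot be swapped by an order-$3$ map. These are minor variants of the same idea.

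Where your argument genuinely diverges is in the final step. The paper concludes by asserting that $\sigma$ ``preserves the fibre class and the sections of $\cE_X$'', implicitly using that the sections together with the $\Theta_i$ generate $\NS X$; this relies on knowing that every section of $\cE_X$ descends to $\cE_R$, which is not justified in the proof itself. You bypass this entirely: once the trivial lattice of rank $19$ is fixed, the $\sigma^*$-complement in $\NS X\otimes\QQ$ has dimension at most $1$, but must be a $\QQ(\zeta_3)$-vector space and hence even-dimensional, so it vanishes. This parity argument is cleaner and self-contained—it needs only the zero-section, not all sections—and would work verbatim for any order-$3$ automorphism fixing a corank-$1$ sublattice.
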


\begin{proof}
    Since $\sigma$ comes from a base change, it preserves the fibre class and the sections of $\cE_X$. It remains to see that the components $\Theta_i$ are also fixed or preserved. Since $\sigma(F^X_b) = F^X_b$, the image $\sigma(\Theta_i)$ is equal to some $\Theta_j$ for every $i = 0,...,17$. Each $\Theta_{3i}$, for $i = 0,...,5$, is lifted from the components of $F_b$ in $R$, so they are fixed by $\sigma$. Every other component $\Theta_i$ intersects a unique $\Theta_{3k}$ for some value of $k$. As a consequence, the intersection point $p$ in $\Theta_i \cap \Theta_{3k}$ is fixed by $\sigma$. Suppose $\sigma(\Theta_i) = \Theta_j$ for some $j \neq i$. Then, $p = \sigma(p) \in \Theta_i \cap \Theta_j \cap \Theta_{3k} = \varnothing$, which is absurd. Therefore, $\sigma(\Theta_i) = \Theta_i$ and $\sigma$ acts trivially on $\NS X$.
\end{proof}

As a corollary $(X,\sigma)$ only has fibrations of type 1 or 2. Furthermore, this implies that $\NS X = N(\sigma)$ (i.e. the part of $\NS X$ fixed by $\sigma$). By Table \ref{order3_lattices}, $\sigma$ has 9 isolated fixed points and 6 fixed curves of genus 0. We conclude that $\Theta_{3i}$ are the only curves fixed by $\sigma$. Six of the isolated fixed points can be found in the intersections of $\Theta_{3i+1}$ and $\Theta_{3i+2}$ for $i = 0,...,5$. The three remaining fixed points lie in $F^X_a$, since the remaining fibres are all interchanged by $\sigma$.

\begin{remark}\label{remark_action_X_fibres}
   In Figure \ref{X_fibres}, the fixed curves of $\sigma$ are highlited as bold, and the isolated fixed points are marked by black dots.
\end{remark}

\subsection{Elliptic fibrations on $X_3$}\label{section_ell_divisors}

By Proposition \ref{K3_elliptic_fibration}, elliptic fibrations on $X$ are equivalent to embeddings $U \hookrightarrow \NS X$. Then, for classes $L,M \in \NS X$ such that $L^2 = 0$, $M^2 = -2$ and $LM = 1$, there is a fibration $\cE_{L,M} \colon X \to \PP^1$ such that $L$ is the fibre class, and $M$ the class of the zero-section. Our goal is to create divisors $L_i$ and $M_i$ which induce an elliptic fibrations $\cE_i \colon X \to \PP^1$ for each fibration 10.i in Table \ref{List_K3_applied_nishiyama}. We can create $L_i, M_i$ using only $\Theta_j$'s and $\Sigma_k$'s as components.

\begin{prop}\label{induced_fibration}
    For $i = 1,...,6$, each pair $L_i,M_i$ in Table \ref{Divisors_elliptic} induces an elliptic fibration $\cE_i \colon X \to \PP^1$ in the $\cJ_1$-class as fibration 10.i in Table \ref{List_K3_applied_nishiyama}.

    \begin{table}[ht]
    \centering
    \begin{tabular}{|c|c|c|}
        \hline
        No. & $L_i$ & $M_i$ \\
        \hline
        $1$ & $2\Theta_{16} + 4 \Theta_{17} + 6\Theta_0 + 3 \Sigma_0 + 5 \Theta_1 + 4\Theta_2 + 3 \Theta_3 + 2 \Theta_4 + \Theta_5$ & $\Theta_{6}$ \\
        \hline
        $2$ & $\Theta_{17} + \Sigma_0 + 2 \sum_{j=0}^{12} \Theta_j + \Sigma_4 + \Theta_{13}$ & $\Theta_{16}$\\
        \hline
        $3$ & $\Theta_{15} + 2 \Theta_{16} + 3 \Theta_{17} + 4\Theta_0 + 2\Sigma_0 + 3 \Theta_1 + 2 \Theta _2 + \Theta_3$ & $\Theta_{4}$ \\
        \hline
        $4$ & $\sum_{j=0}^{17} \Theta_j$ & $\Sigma_0$\\
        \hline
        $5$ & $3 \Theta_0 + 2\Theta_1 + 2 \Theta_{17} + 2 \Sigma_0 + \Theta_2 + \Theta_{16} + \Sigma_3$ & $\Theta_{3}$\\
        \hline
        $6$ & $\Theta_{17} + \Sigma_0 + 2 \Theta_0 + 2 \Theta_1 + 2 \Theta_2 + 2 \Theta_3 + \Sigma_1 + \Theta_4$ & $\Theta_{5}$ \\
        \hline
    \end{tabular}
    \caption{Divisors inducing elliptic fibrations on $X_3$}
    \label{Divisors_elliptic}
\end{table}
\end{prop}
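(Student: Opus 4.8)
The plan is to reduce the statement to the $\cJ_2$-classification already obtained in Section~\ref{section_nishiyama}, using that $\cJ_1(X_3)=\cJ_2(X_3)$ by Corollary~D of \cite{BKW}. Concretely: for each $i$ I would (1) check the numerical conditions on $(L_i,M_i)$ from the intersection combinatorics of the curves in Table~\ref{Divisors_elliptic}, (2) recognise $L_i$ as a fibre class and $M_i$ as a section of an elliptic fibration $\cE_i$, and (3) match the frame lattice of $\cE_i$ with row $10.i$ of Table~\ref{all_fibrations}.

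First I would set up the intersection dictionary from the construction of $\cE_X$ and Figure~\ref{X_fibres}: the components $\Theta_0,\dots,\Theta_{17}$ of $F^X_b$ form an $I_{18}$ cycle ($\Theta_j^2=-2$, $\Theta_j\cdot\Theta_{j+1}=1$ with indices mod $18$, all other products $0$), and each section $\Sigma_k$ has $\Sigma_k^2=-2$ and meets $F^X_b$ transversally in the single component read off from the figure. With this dictionary the identities $L_i^2=0$, $M_i^2=-2$, $L_i\cdot M_i=1$ for $i=1,\dots,6$ are a finite mechanical verification; for $i=4$ one simply has $L_4=F^X_b$ and $M_4=\Sigma_0$, so $\cE_4$ is $\cE_X$ itself. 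In each case $L_i$ is effective and is, after the identifications of Figure~\ref{X_fibres}, the fundamental cycle of a connected affine Dynkin configuration of $(-2)$-curves among $\{\Theta_j\}\cup\{\Sigma_k\}$ — of type $\widetilde A_{17}$ for $i=4$, $\widetilde E_8$ for $i=1$, and the affine type dictated by the $ADE$-type of row $10.i$ in the other cases. Hence $L_i$ is primitive of square $0$ and $|L_i|$ has no fixed component, so $L_i$ is nef and $|L_i|$ is a base-point-free genus-$1$ pencil defining a Jacobian elliptic fibration $\cE_i\colon X\to\PP^1$ with fibre class $L_i$ and zero-section $M_i$; equivalently $\langle L_i,L_i+M_i\rangle\cong U$ embeds primitively in $\NS X$ and $\cE_i$ is the fibration it determines via Proposition~\ref{K3_elliptic_fibration}, with $W_{\cE_i}=\langle L_i,L_i+M_i\rangle^{\perp\,\NS X}$.

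To identify the $\cJ_2$-class of $\cE_i$ I would compute the pair $(T,\MW(\cE_i))$. The $(-2)$-curves among $\{\Theta_j\}\cup\{\Sigma_k\}$ orthogonal to both $L_i$ and $M_i$ are fibre components of $\cE_i$ disjoint from the zero-section, and one checks they form exactly the Dynkin diagram prescribing the $ADE$-type $T$ of row $10.i$. Since $W_{\cE_i}$ is the negative-definite rank-$18$ lattice with discriminant form $q_{\NS X}=q_{A_2}$, this determines $W_{\cE_i}$, hence $T=M_\mathrm{root}$ and $\MW(\cE_i)=W_{\cE_i}/T$ (the torsion being forced by Shimada's table, or inherited from $\MW(\cE_X)=\ZZ\oplus\ZZ/3$, where relevant). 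As the six rows $10.1,\dots,10.6$ of Table~\ref{all_fibrations} carry pairwise distinct invariants $(T,\MW)$, the pair just computed places $\cE_i$ in the $\cJ_2$-class $10.i$; and since $\cJ_1(X_3)=\cJ_2(X_3)$, $\cE_i$ is then in the $\cJ_1$-class of fibration $10.i$, as claimed.

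The main obstacle is the last step: ensuring the $(-2)$-curves in $\{\Theta_j\}\cup\{\Sigma_k\}$ orthogonal to $L_i$ and $M_i$ exhaust the reducible-fibre configuration of $\cE_i$, i.e.\ that no further reducible fibre is hidden outside this configuration. This is controlled by the rank identity $2+\rank T+\rank\MW(\cE_i)=20$ together with the fixed discriminant form $q_{A_2}$ of $W_{\cE_i}$; alternatively, one simply observes that among the frame lattices produced by the Kneser--Nishiyama method in Table~\ref{all_fibrations}, only row $10.i$ is compatible with the partial root type computed directly from the curves, so the identification is forced without needing an exhaustive search for fibre components.
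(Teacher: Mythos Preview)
Your proposal is correct and follows essentially the same approach as the paper: recognise each $L_i$ as the fundamental cycle of a reducible fibre of a specific Kodaira type, read off the corresponding root summand ($E_8$, $D_{16}$, $E_7$, $A_{17}$, $E_6$, $D_7$ for $i=1,\dots,6$), and observe that this summand singles out row $10.i$ among the six frame lattices in Table~\ref{all_fibrations}. The paper's proof is exactly your ``alternatively'' clause at the end; your more elaborate attempt to exhibit the full $T$ from the visible $(-2)$-curves is unnecessary (as you yourself note), and the paper skips it entirely.
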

\begin{proof}
    For each $i$, the divisor $L_i$ lies in the fibre class of $\cE_i$. By construction, this divisor represents a reducible fibre of $\cE_i$, and its root type as a lattice must appear in the $ADE$-type of the fibration. For example, $L_1$ is a fibre of type $II^*$, which induces an $E_8$ in the $ADE$-type of $\cE_1$. The only fibration with this $ADE$-type possible in $X$ by Table \ref{List_K3_applied_nishiyama} is 10.1. Equivalently, $L_2$ induces a $D_{16}$, $L_3$ an $E_7$, $L_4$ an $A_{17}$, $L_5$ an $E_6$ and $L_6$ a $D_7$. These root lattices are all unique to the $ADE$-types of their respective fibrations.
\end{proof}

In order to apply Theorem \ref{class_linearsys}, we need to describe $\tilde{X}$, given by the blow-up of the isolated fixed points of $\sigma$, and $\tilde{R}$, the quotient of $\tilde{X}$ by the lifting $\tilde{\sigma}$ of $\sigma$. The surface $\tilde{X}$ has 9 new components given by the exceptional curves of the blow-up $\eta \colon \tilde{X} \to X$. We denote the 6 curves above the intersection points of $\Theta_{3i+1}$ and $\Theta_{3i+2}$ by $\Theta_{3i+1,3i+2}$ (for $i = 0, {.}{.}{.},5$), and the 3 curves above the intersection of $\Sigma_i$ and $\Sigma_{i+3}$ by $\Sigma_{i,i+3}$ (for $i = 0,1,2$). For each curve $C$ in $X$, $\tilde{C}$ denotes the strict transform of $C$ by $\eta$. The self intersections of each component are given by $\tilde{\Theta}_i^2 = -2$ if $i \equiv 0 \mod 3$, $\tilde{\Theta}_i^2 = -3$ if $i \not\equiv 0 \mod 3$, $\tilde{\Sigma}_i^2 = -3$, $\Theta_{i,i+1}^2 = \Sigma_{i,i+3}^2 = -1$ and $\tilde{F}_a^2 = -2$. The automorphism $\tilde{\sigma}$ of $\tilde{X}$ maintains the action of $\sigma$ on each strict transform and fixes the exceptional curves.

\newpage
\begin{figure}[ht]
    \centering
    \tikzfig{X_tilde}
    \caption{Components of $\tilde{X}$}
    \label{X_tilde-R_tilde}
\end{figure}

Let $\tilde{R} = \tilde{X}/\tilde{\sigma}$, and $\tilde{\pi}: \tilde{X} \rightarrow \tilde{R}$ the quotient map. For each component above, we calculate the pushforward by $\tilde{\pi}$ given as follows.

\begin{center}
\begin{tabular}{llll}
   
   $\tilde{\pi}_{*}\tilde{\Theta}_{0} = \tilde{E}_1$    &
   $\tilde{\pi}_{*}\tilde{\Theta}_{9} = \tilde{\ell}_1$ &
   $\tilde{\pi}_{*}\tilde{\Sigma}_{0} = 3\tilde{H}_4$    &
   $\tilde{\pi}_{*}\Theta_{1,2} = R_1$  \\
   $\tilde{\pi}_{*}\tilde{\Theta}_{1} = 3S_1$            &
   $\tilde{\pi}_{*}\tilde{\Theta}_{10} = 3S_7$           &
   $\tilde{\pi}_{*}\tilde{\Sigma}_{1} = 3\tilde{H}_3$    &
   $\tilde{\pi}_{*}\Theta_{4,5} = R_2$  \\
   $\tilde{\pi}_{*}\tilde{\Theta}_{2} = 3S_2$            &
   $\tilde{\pi}_{*}\tilde{\Theta}_{11} = 3S_8$           &
   $\tilde{\pi}_{*}\tilde{\Sigma}_{2} = 3\tilde{H}_5$    &
   $\tilde{\pi}_{*}\Theta_{7,8} = R_3$  \\
   $\tilde{\pi}_{*}\tilde{\Theta}_{3} = \tilde{\ell}_3$ &
   $\tilde{\pi}_{*}\tilde{\Theta}_{12} = \tilde{E}_3$   &
   $\tilde{\pi}_{*}\tilde{\Sigma}_{3} = 3\tilde{H}_1$    &
   $\tilde{\pi}_{*}\Theta_{10,11} = R_4$\\
   $\tilde{\pi}_{*}\tilde{\Theta}_{4} = 3S_3$            &
   $\tilde{\pi}_{*}\tilde{\Theta}_{13} = 3S_9$           &
   $\tilde{\pi}_{*}\tilde{\Sigma}_{4} = 3\tilde{H}_6$    &
   $\tilde{\pi}_{*}\Theta_{13,14} = R_5$\\
   $\tilde{\pi}_{*}\tilde{\Theta}_{5} = 3S_4$            &
   $\tilde{\pi}_{*}\tilde{\Theta}_{14} = 3S_{10}$        &
   $\tilde{\pi}_{*}\tilde{\Sigma}_{5} = 3\tilde{H}_2$    &
   $\tilde{\pi}_{*}\Theta_{16,17} = R_6$\\
   $\tilde{\pi}_{*}\tilde{\Theta}_{6} = \tilde{E}_2$    &
   $\tilde{\pi}_{*}\tilde{\Theta}_{15} = \tilde{\ell}_2$&
   $\tilde{\pi}_{*}\Sigma_{0,3} = \tilde{r}_1$          &
   $\tilde{\pi}_{*}\tilde{F}_a = 3E_O$\\
   $\tilde{\pi}_{*}\tilde{\Theta}_{7} = 3S_5$            &
   $\tilde{\pi}_{*}\tilde{\Theta}_{16} = 3S_{11}$        &
   $\tilde{\pi}_{*}\Sigma_{1,4} = \tilde{r}_3$&\\
   $\tilde{\pi}_{*}\tilde{\Theta}_{8} = 3S_6$            &
   $\tilde{\pi}_{*}\tilde{\Theta}_{17} = 3S_{12}$        &
   $\tilde{\pi}_{*}\Sigma_{2,5} = \tilde{r}_2$&\\
\end{tabular}
\end{center}

The intersection pattern between distinct components is maintained, but the self-intersections are changed. The pushforwards of curves fixed by $\tilde{\sigma}$ have the self-intersection multiplied by 3, and the pushforward of curves preserved by $\tilde{\sigma}$ but not fixed have self intersection divided by 3. We obtain $\tilde{E}_i^2 = \tilde{\ell}_i^2 = -6$, $R_i^2 = \tilde{r}_i^2 = -3$, and $S_i^2 = \tilde{H}_i^2 = E_O^2 = -1.$ Notice that even though $g(\tilde{F}_a) = 1$, since $\tilde{\sigma}$ fixes 3 distinct points in $\tilde{F}_a$, by Riemann-Hurwitz we have $g(E_O) = 0$.

If we contract the $(-1)$-curves $S_1,...,S_{12}$ and $E_O$, and subsequently contract the curves $R_1,...,R_6$, we obtain a birational morphism $\varepsilon \colon \tilde{R} \to R$, with $\varepsilon_{*} \tilde{E}_i = E_i, \varepsilon_{*} \tilde{\ell}_i = \ell_i, \varepsilon_{*} \tilde{H}_i = H_i$ and $\varepsilon_{*} \tilde{r}_i = r_i$. Since the canonical divisor of $R$ is equal to $-F$ in $\NS R$, we can use $\varepsilon$ to calculate the canonical divisor of $\tilde{R}$

\begin{align*}
    K_{\tilde{R}} = \textstyle\sum_{i=1}^{6}R_i + 2\sum_{i=1}^{12}S_i -\sum_{i=1}^3 \tilde{r_i} - 2E_O.
\end{align*}

\begin{thm}\label{main_classification}
    In $(X,\sigma)$, the elliptic fibrations $\cE_1$ and $\cE_5$ in Proposition \ref{induced_fibration} are of type 1 and $\cE_2, \cE_3,\cE_4,\cE_6$ are of type 2.
\end{thm}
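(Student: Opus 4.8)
The plan is to determine the type of each $\cE_i$ purely from the structural results of Section~\ref{K3_surfaces_ns_order3} together with the $ADE$-data of the corresponding fibration $10.i$ (matched to $\cE_i$ by Proposition~\ref{induced_fibration}), rather than by computing the induced linear systems on $\tilde R$. Since $\sigma$ acts trivially on $\NS X$, it fixes the class of every fibre of every Jacobian elliptic fibration, so none of the $\cE_i$ is of type 3 (Remark~\ref{no_type_3}) and each is of type 1 or of type 2; thus for every $i$ it suffices to exclude one of the two options.

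For the type-2 statements I would argue as follows. The divisor $L_4=\sum_{j=0}^{17}\Theta_j$ is the class of the $I_{18}$-fibre $F^X_b$, hence the class of a fibre of $\cE_X$, and $M_4=\Sigma_0$ is a section of $\cE_X$; therefore $\cE_4=\cE_X$, which is of type 2 by construction (the base change of $R$ along the cubic Galois cover $\tau$). The fibration $\cE_3$ has Mordell--Weil rank $1$, so by Proposition~\ref{type_1_rank_0} it is not of type 1, hence of type 2. The fibrations $\cE_2$ and $\cE_6$ carry a reducible fibre of Kodaira type $I_{12}^{*}$, resp.\ $I_3^{*}$ (coming from the summands $D_{16}$, resp.\ $D_7$, of their $ADE$-types), and these types are absent from the list in Proposition~\ref{condition_type_1}; so $\cE_2$ and $\cE_6$ are not of type 1, hence of type 2.

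For the type-1 statements the key input is that a type-2 fibration on $(X,\sigma)$ with $\sigma$ trivial on $\NS X$ has at most two reducible fibres: by Proposition~\ref{fibres_type_2} every fibre other than the two branch fibres $F^X_a,F^X_b$ lies in a $\sigma$-orbit of size $3$, and were such an orbit fibre reducible, $\sigma$ would carry its components to components of the other two fibres of the orbit, contradicting triviality of the action on $\NS X$ (this is the Corollary to Proposition~\ref{fibres_type_2}). Now $\cE_1$ has $ADE$-type $E_8^{\oplus 2}\oplus A_2$ and $\cE_5$ has $ADE$-type $E_6^{\oplus 3}$, so each has three reducible fibres; neither can therefore be of type 2, and both are of type 1. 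For $\cE_1$ one may alternatively invoke that a type-2 fibration has no fibre of type $II^{*}$; only $\cE_5$ genuinely needs the orbit-fibre count, since it passes the rank and Kodaira-type tests and is the delicate case of the theorem.

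I do not expect a serious obstacle on this route: the substance is matching each $\cE_i$ to its row $10.i$ and translating $ADE$-summands into Kodaira fibre types. If one instead follows the method showcased in this section — compute $\Lambda_i=\tilde\pi_*\eta_X^{*}(L_i)$ from the pushforward table and apply Theorem~\ref{class_linearsys} — the delicate points would be evaluating the pullbacks $\eta_X^{*}(L_i)$ correctly (some $L_i$ meet the blown-up isolated fixed points with multiplicity) and then verifying that the resulting class on $\tilde R$ is a conic bundle class ($D^2=0$, $D\cdot(-K_{\tilde R})=2$, nef) for $i=1,5$ and a splitting genus-$1$ pencil ($D^2=0$, $D\cdot K_{\tilde R}=0$) for $i=2,3,4,6$; the argument sketched above bypasses these computations.
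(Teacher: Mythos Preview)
Your argument is correct and takes a genuinely different route from the paper. The paper proves the theorem by explicitly computing, for each $i$, the divisor $\tilde\pi_*(\eta_X^*L_i)$ on $\tilde R$ and its intersection with $K_{\tilde R}$: when this intersection equals $-6$ the induced pencil is (three times) a conic bundle class (type~1), and when it equals $0$ it is a splitting genus-$1$ pencil (type~2). These pushforwards are tabulated and Theorem~\ref{class_linearsys} is invoked directly.

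Your approach instead reads off the type from the $ADE$-data of each $\cE_i$ using the structural constraints of Section~\ref{K3_surfaces_ns_order3}: Proposition~\ref{condition_type_1} (forbidden Kodaira types for type~1), Proposition~\ref{type_1_rank_0} (Mordell--Weil rank $0$ for type~1 when $\sigma$ is trivial on $\NS X$), and the corollary to Proposition~\ref{fibres_type_2} (at most two reducible fibres for type~2 when $\sigma$ is trivial on $\NS X$). This is more economical and in particular settles $\cE_5$ cleanly via the reducible-fibre count --- the one case the general arguments of Section~\ref{section_nishiyama} leave unresolved. On the other hand, the paper's computation is not wasted effort: the explicit divisors $\tilde\pi_*(\eta_X^*L_i)$ produced in the proof are immediately reused in Section~\ref{section_pencils} to determine the pencils $\Gamma_i$ on $\PP^2$ and hence the Weierstrass equations for each $\cE_i$. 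So while your route certifies the theorem with less work, the paper's route simultaneously furnishes the data needed for the explicit equations that are the main payoff of Section~\ref{section_X3_surface}.
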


\begin{proof}
    For every $L_i$ in Table \ref{Divisors_elliptic}, apply the first pullback by $\eta$, then the pushforward by $\tilde{\pi}$. The divisor $\tilde{\pi}_*(\eta^*L_i)$ induces a linear system in $\tilde{R}$, which by Theorem \ref{class_linearsys} can be used to classify $\cE_i$ with respect to the action of $\sigma$. If $\cE_i$ is of type 1, then $\tilde{\pi}_*(\eta^*L_i) = 3\tilde{L}_i$, where $\tilde{L_i}$ is a generalized conic bundle class in $\tilde{R}$, so $\tilde{\pi}_*(\eta^*L_i)\cdot K_{\tilde{R}} = 3\tilde{L_i} \cdot K_{\tilde{R}} = 3 \cdot (-2) = -6$. If $\cE_i$ is of type 2, then $\tilde{\pi}_*(\eta^*L_i) = \tilde{L}_i$ is a splitting genus 1 pencil, and $\tilde{L_i}\cdot K_{\tilde{R}} = 0$. Since $\sigma$ acts trivially on $\NS X$, there are no fibrations of type 3. The explicit calculations of $\tilde{\pi}_*(\eta^*L_i)$ and its intersection with $K_{\tilde{R}}$ are presented in Table \ref{tilde_Li}.
\end{proof}

\begin{table}[ht]
    \centering
    \begin{tabular}{|c|c|c|}
    \hline
        No. & $\tilde{\pi}_*(\eta^*L_i)$ & $\tilde{\pi}_*(\eta^*L_i)\cdot K_{\tilde{R}}$ \\
    \hline
        $1$ & \small${\begin{matrix}
            & \\
            3\tilde{L}_1 = & 3(2S_{11} + 2R_6 + 4S_{12} + 2\tilde{E}_1 + 3 \tilde{H}_4 + \tilde{r}_1 \\
            & + 5S_1 + 3R_1 + 4S_2 + \tilde{\ell}_3 + 2S_3 + R_2 + S_4)\\
        \end{matrix}}$ & $-6$\\
        \hline
        $\begin{matrix}
        2\\
        \\
        \\
        \end{matrix}$& \small$\begin{matrix}
            &\\
            \tilde{L}_2 = & R_6 + 3S_{12} + \tilde{r}_1 + 3\tilde{H}_4 + 2\tilde{E}_1 + 6S_1 + 4R_1\\
            & + 6S_2 + 2\tilde{\ell}_3 + 6S_3 + 4R_2 + 6S_4 + 2\tilde{E}_2\\
            & + 6S_5 + 4R_3 + 6S_6 + 2\tilde{\ell}_1 + 6S_7 + 4R_4\\
            & + 6S_8 + 2\tilde{E}_3 + 3\tilde{H}_6 + \tilde{r}_3 + 3S_9 + R_5\\
        \end{matrix}$ & $0$\\
        \hline
        $3$ & \small$\begin{matrix}
            &\\
            \tilde{L}_3 = & \tilde{\ell}_{2} + 6S_{11} + 5R_6 + 9S_{12} + 4\tilde{E}_1 + 6\tilde{H}_4\\
            & + 2\tilde{r}_1 + 9S_1 + 5R_1 + 6S_2 + \tilde{\ell}_3
        \end{matrix}$ & $0$\\
        \hline
        $\begin{matrix}
        \\
        4\\
        \\
        \end{matrix}$& \small$\begin{matrix}
            &\\
            \tilde{L}_4 = & \sum_{i=1}^{3} \tilde{E}_i + \sum_{i=1}^{3} \tilde{\ell}_i + 3\sum_{i=1}^{12} S_i + 2 \sum_{i=1}^{6} \tilde{R}_i\\
            &\\
        \end{matrix}$& $0$\\
        \hline
        $5$ & \small$\begin{matrix}
            &\\
            3\tilde{L}_5 = & 3(\tilde{E}_1 + 2S_1 + R_1 + S_2 + 2S_{12}\\
            & + R_6 + S_{11} + 2\tilde{H}_4 + \tilde{r}_1 + \tilde{H}_1)\\
        \end{matrix}$ & $-6$\\
        \hline
        $\begin{matrix}
        6\\
        \\
        \end{matrix}$ & \small$\begin{matrix}
            &\\
            \tilde{L}_6 = & R_6 + 3S_{12} + \tilde{r}_1 + 3\tilde{H}_4 + 2\tilde{E}_1 + 6S_1 + 4 R_1\\
            & + 6S_2 + 2\tilde{\ell}_3 + 3\tilde{H}_3 + \tilde{r}_3 + 3S_3 + R_2\\
            & \\
        \end{matrix}$ & $0$\\
        \hline
    \end{tabular}
    \caption{Divisors induced in $\tilde{R}$}
    \label{tilde_Li}
\end{table}
\subsection{Weierstrass Equations of the elliptic fibrations}\label{section_pencils}

In this section, we apply the method described in Section \ref{weierstrass_section} to find Weierstrass equations for each $\cE_i$ in Table \ref{Divisors_elliptic}. Let $\Gamma_i$ be the pencils of curves in $\PP^2$ induced by each $\cE_i$. We describe the geometry of $\Gamma_i$ by applying the following substitutions for each $\tilde{L}_i$ in Table \ref{tilde_Li}, where $\tilde{h}$ is the pullback of the line class $h \in \Pic(\PP^2)$ to $\Pic(\tilde{R})$.
{\footnotesize
\begin{align*}
    \tilde{l_1} = \ &\tilde{h} - \tilde{E_2} - \tilde{E_3} - \tilde{H_1} - \tilde{H_5} - \tilde{H_6} - R_2 - 2R_3 - 2R_4 - R_5\\ &- S_3 - 2S_4 - 3S_5 - 3S_6 - 3S_7 - 3S_8 - 2S_9 - S_{10},\\
    \tilde{l_2} = \ &\tilde{h} - \tilde{E_1} - \tilde{E_3} - \tilde{H_2} - \tilde{H_4} - \tilde{H_6} - R_1 - R_4 - 2R_5 - 2R_6\\ &- 2S_1 - S_2 - S_7 - 2S_8 - 3S_9 - 3S_{10} - 3S_{11} - 3S_{12},
\end{align*}
\begin{align*}
    \tilde{l_3} = \ &\tilde{h} - \tilde{E_1} - \tilde{E_2} - \tilde{H_3} - \tilde{H_4} - \tilde{H_5} - 2R_1 - 2R_2 - R_3 - R_6\\ &- 3S_1 - 3S_2 - 3S_3 - 3S_4 - 2S_5 - S_6 - S_{11} - 2S_{12},\\
    \tilde{r_1} = \ &\tilde{h} - E_{O} - \tilde{E_1} - \tilde{H_1} - 2\tilde{H_4} - R_1 - R_6 - 2S_1 - S_2 - S_{11} - 2S_{12},\\
    \tilde{r_2} = \ &\tilde{h} - E_{O} - \tilde{E_2} - \tilde{H_2} - 2\tilde{H_5} - R_2 - R_3 - S_3 - 2S_4 - 2S_5 - S_{6},\\
    \tilde{r_3} = \ &\tilde{h} - E_{O} - \tilde{E_3} - \tilde{H_3} - 2\tilde{H_6} - R_4 - R_5 - S_7 - 2S_8 - 2S_{9} - S_{10}.
\end{align*}}
As an example, applying these substitutions to $\tilde{L_1}$ we obtain $\tilde{L_1} =  2\tilde{h} - E_O - \tilde{E_2} - \tilde{H_1} - \tilde{H_3} - \tilde{H_5} - R_2 - R_3 - S_3 - 2S_4 - 2S_5 - S_6$. The component $2\tilde{h}$ indicates that the linear system $\Gamma_1$ in $\PP^2$ is composed of conics. The negative components $E_O, E_2, H_1$ and $H_3$ indicate that the conics in $\Gamma_1$ pass through the points $O, P_2, Q_1$ and $Q_3$ in $\PP^2$, and the remaining components come from the pullback of the former. Consequently, $\Gamma_1$ is the pencil of conics through $O, P_2, Q_1, Q_3$. We can do the same for the rest of the divisors $\tilde{L_i}$ (see Table \ref{Gamma_i}).

\begin{table}[ht]
    \centering
    \begin{tabular}{|c|c|c|}
    \hline
        No. & $\tilde{L_i}$ & $\Lambda_i$ \\
    \hline
        $\begin{matrix}
        1\\ \\
        \end{matrix}$ & \small${\begin{matrix}
            & \\
            \tilde{L_1} = \ &2\tilde{h} - E_O - \tilde{E_2} - \tilde{H_1}\\
            & - \tilde{H_3} - \tilde{H_5}
             - R_2 - R_3\\
            & - S_3 - 2S_4 - 2S_5 - S_6\\
        \end{matrix}}$ & $\begin{matrix}
            \text{Conics through}\\
            P_2, Q_1,Q_3, O.
        \end{matrix}$\\
        \hline
        $\begin{matrix}
        2\\
        \\ \\ \\ \\
        \end{matrix}$& \small$\begin{matrix}
            &\\
            \tilde{L_2} = \ &6 \tilde{h} - 2E_O - \tilde{E_1} - 2\tilde{E_2} - \tilde{E_3}\\
            & - 3\tilde{H_1} - 3\tilde{H_3} - \tilde{H_4} - 4 \tilde{H_5} - \tilde{H_6}\\
            & - R_1 - 2R_2 - 2R_3 - R_4- 2R_5\\
            & - 2R_6 - 2S_1 - S_2 - 2S_3 - 4S_4 \\
            & - 4S_5 - 2S_6 - S_7 - 2S_8 - 3S_9\\
            & - 3S_{10} - 3S_{11} - 3S_{12}\\
        \end{matrix}$ & $\begin{matrix}
            \text{Sextics through}\\
            Q_1, Q_3 \ \text{(multiplicity 3)},\\
            P_2 \ \text{(tacnode tangent to $r_2$)},\\
            O \ \text{(multiplicity 2)},\\
            P_1, P_3 \ \text{(tangent to $l_2$).}
        \end{matrix}$\\
        \hline
        $\begin{matrix}
        3\\ \\ \\ \\
        \end{matrix}$ & \small$\begin{matrix}
            &\\
            \tilde{L_3} = \ &4\tilde{h} - 2E_O - \tilde{E_2} - \tilde{E_3}
            \\ & - 2\tilde{H_1} - \tilde{H_2} - \tilde{H_3} - \tilde{H_5} - \tilde{H_6}\\
            & - 2R_2 - R_3 - R_4 - 2R_5\\
            &- 3S_3 - 3S_4 - 2S_5 - S_6\\
            & - S_7 - 2S_8 - 3S_9 -3S_{10}\\
        \end{matrix}$ & $\begin{matrix}
            \text{Quartics through}\\
            O, Q_1 \ \text{(multiplicity 2)},\\
            P_2, P_3, Q_2, Q_3 \ \text{(multiplicity 1).}
        \end{matrix}$\\
        \hline
        $\begin{matrix}
        \\ 4 \\ \\ \\ \\ \\
        \end{matrix}$&
        \small$\begin{matrix}
            &\\
            \tilde{L_4} =  &3\tilde{h} - \tilde{E_1} - \tilde{E_2} - \tilde{E_3} - \tilde{H_1}
            - \tilde{H_2}\\
            &- \tilde{H_3} - 2\tilde{H_4} - 2\tilde{H_5} - 2\tilde{H_6}
            - R_1 - R_2\\
            & - R_3 -R_4 - R_5 - R_6 - 2S_1 - S_2\\
            & - S_3 - 2S_4 - 2S_5 - S_6 - S_7\\
            & - 2S_8 - 2S_9 - S_{10} - S_{11} - 2S_{12}\\   
        \end{matrix}$ & $\begin{matrix}
            \text{Cubics through } Q_1,Q_2,Q_3 \\
            P_1 \ \text{(tangent to } r_1), \\
            P_2 \ \text{(tangent to } r_2), \\
            P_3 \ \text{(tangent to } r_3). \\
        \end{matrix}$\\
        \hline
        $5$ & \small $\begin{matrix}
        \\ \tilde{L_5} = \tilde{h} - E_O\\ \\ 
        \end{matrix}$ & Lines through $O$.\\
        \hline
        $\begin{matrix}
        \\
        6\\ \\ \\ \\ \\
        \end{matrix}$ & \small$\begin{matrix}
            &\\
            \tilde{L_6} = & 4\tilde{h} - 2E_O - \tilde{E_1} - 2\tilde{E_2} - \tilde{E_3}\\
            &- \tilde{H_1} -\tilde{H_4} - 2\tilde{H_5} - 2\tilde{H_6} - R_1\\
            & - 2R_2 - 2R_3 - R_4 - R_5 -2R_6\\
            & -2S_1 - S_2 - 3S_3 - 6S_4 - 4S_5 - 2S_6\\
            & - S_7 - 2S_8 - 2S_9 - S_{10} - 2S_{11} - S_{12}\\
        \end{matrix}$ & $\begin{matrix}
            \text{Quartics through}\\
            O, P_2 \ \text{(multiplicity 2)},\\
            P_1, Q_1, \ \text{(multiplicity 1)},\\
            P_3 \ \text{(tangent to } r_3).\\
            \end{matrix}$\\
        \hline
    \end{tabular}
    \caption{Linear systems $\Lambda_i$}
    \label{Gamma_i}
\end{table} 

The description of these systems allows us to use Propositions \ref{equation_type2} and \ref{equation_type1} and find Weierstrass equations for each fibration in Proposition \ref{Divisors_elliptic}.

\begin{prop}\label{weierstrass_eq}
    The elliptic fibrations $\cE_2$, $\cE_3$ and $\cE_6$ are given by the following equations in Weierstrass form:
    \small\begin{align*}
        \cE_2: y^2 &+ \tfrac{2t^6}{t^3 - 3}xy + \tfrac{2t^{15}}{t^9 - 9t^6 + 27t^3 - 27}y\\
          &= x^3 + \tfrac{-3t^{12} + 2t^9}{t^6 - 6t^3 + 9}x^2 + \tfrac{t^{24} - 6t^{21} + t^{18}}{t^{12} - 12t^9 + 54t^6 - 108t^3 + 81}x + \tfrac{t^{30}}{t^{15} - 15t^{12} + 90t^9 - 270t^6 + 405t^3 - 243},\\
        \cE_3: y^2 &+ \tfrac{2}{t^6 + t^3}xy - \tfrac{2}{t^{15} + 3t^{12} + 3t^9 + t^6}y = x^3 - \tfrac{2t^3 + 1}{t^{12} + 2t^9 + t^6}x^2 + \tfrac{1}{t^{18} + 3t^{15} +    3t^{12} + t^9}x,\\
        \cE_6: y^2 &- \tfrac{1}{t^3 - 1}xy - \tfrac{t^3}{t^6 - 2t^3 + 1}y=x^3 + \tfrac{3t^6 - t^3}{t^6 - 2t^3 + 1}x^2 + \tfrac{3t^{12} - 2t^9}{t^{12} - 4t^9 + 6t^6 - 4t^3 + 1}x + \tfrac{t^{15}}{t^{15} - 5t^{12} + 10t^9 - 10t^6 + 5t^3 - 1}.
    \end{align*}
\end{prop}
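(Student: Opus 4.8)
By Theorem~\ref{main_classification} each of $\cE_2$, $\cE_3$, $\cE_6$ is of type~$2$ with respect to $\sigma$, so Proposition~\ref{equation_type2} applies and the generic fibre of $\cE_i$ is cut out in $\PP^2$ by an equation
\[
    \cC_a(x,y,z) + t^3\,\cC_b(x,y,z) = 0,
\]
where $\cC_a$ and $\cC_b$ are the two members of the plane pencil $\Gamma_i$ recorded in Table~\ref{Gamma_i} lying above the ramification points of the cubic Galois cover $\tau_{\cE_i}$. The plan is: (i) write down $\cC_a$ and $\cC_b$ explicitly as plane curves; (ii) substitute into the displayed equation; and (iii) reduce the resulting curve over $k(t)$ to Weierstrass form and read off the stated coefficients.

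For step (i) I would first determine which fibres of $\cE_i$ are ramified by $\tau_{\cE_i}$. By Remark~\ref{remark_ADE}, every reducible fibre that is the unique one of its Kodaira type must be ramified, and the $ADE$-types from Table~\ref{all_fibrations} (namely $D_{16}\oplus A_2$ for $\cE_2$, $E_7\oplus D_{10}$ for $\cE_3$, $D_7\oplus A_{11}$ for $\cE_6$), together with Proposition~\ref{fibres_type_2}, pin the two ramified fibres down in each case; for instance $\cE_2$ must have ramified fibres $I_{12}^*$ and $I_3$, which by Proposition~\ref{basechange_is_k3} lie over fibres of type $I_4^*$ and $I_1$ on the rational elliptic model. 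Then $\cC_a$ and $\cC_b$ are exactly the members of $\Gamma_i$ at which the generic plane curve of the pencil acquires extra degeneracy: a splitting into lines and conics (the heavily reducible additive fibre) and an extra node (the $I_1$ over which the multiplicative fibre $I_m$ sits), with the precise degeneration dictated by the corresponding Kodaira type. Parametrising $\Gamma_i$ by writing down the general plane curve obeying the base-point, multiplicity and tangency conditions of Table~\ref{Gamma_i}, and solving for the special parameter values, produces the polynomials $\cC_a$ and $\cC_b$.

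For steps (ii)--(iii): the curve $\{\cC_a + t^3\cC_b = 0\}$ carries the $k(t)$-rational point coming from the zero-section of $\cE_i$, whose class is listed in Table~\ref{Divisors_elliptic} and which can be tracked through the contractions $\tilde R\to R\to\PP^2$. A cleaner variant is to first reduce the rational elliptic surface $\cE'_i\colon R'_i\to\PP^1$ induced by $\Gamma_i$ to Weierstrass form (its coefficients $a_j$ have degree $\le j$, so this is quite tractable) and only then substitute $s\mapsto t^3$; for $\cE_4$ (a pencil of cubics) this is immediate, and for the quartic pencils $\Gamma_3,\Gamma_6$ and the sextic pencil $\Gamma_2$ one projects from one of the assigned singular points to obtain a binary-quartic (resp.\ degree-three) model and then completes the square/cube in the standard way. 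Finally I would verify the answer intrinsically by computing the discriminant and $j$-invariant of each resulting model and checking that the Kodaira fibres, the $ADE$-type, and the Mordell--Weil group agree with fibration $10.i$ in Table~\ref{all_fibrations}.

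The main obstacle is computational, not conceptual: extracting the special members $\cC_a,\cC_b$ forces one to impose the higher-order incidence conditions of Table~\ref{Gamma_i} (in particular the tacnodal sextics for $\cE_2$), and the subsequent Weierstrass reduction of plane curves of degree up to $6$ is genuinely lengthy in the $\cE_2$ case; I would carry this out with a computer algebra system, using the intrinsic fibre-configuration check above as a safeguard against arithmetic errors.
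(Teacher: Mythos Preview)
Your overall plan is sound and agrees with the paper's: invoke Proposition~\ref{equation_type2}, identify the two ramified fibres from the $ADE$-type via Remark~\ref{remark_ADE} and Proposition~\ref{fibres_type_2}, produce the corresponding members $\cC_a,\cC_b$ of the plane pencil $\Gamma_i$, and then put $\cC_a+t^3\cC_b=0$ into Weierstrass form.

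Where you diverge from the paper is in step~(i), the actual production of $\cC_a$ and $\cC_b$. You propose to parametrise the pencil $\Gamma_i$ by imposing the base conditions of Table~\ref{Gamma_i} and then to solve for the two special parameter values where the member degenerates in the prescribed way. The paper instead stays on $X$: it already knows one ramified fibre as the explicit divisor $L_i$ from Table~\ref{Divisors_elliptic}, and it writes down the second ramified fibre $F_{i,b}$ by hand as a combination of the visible curves $\Theta_j,\Sigma_k$ (and, for $\cE_2$, one extra section $\Sigma'$). Pushing $\eta^*F_{i,a}$ and $\eta^*F_{i,b}$ forward by $\tilde\pi$ and then by the contraction to $\PP^2$ immediately exhibits $\cC_a$ and $\cC_b$ as explicit products of the six lines $\ell_i,r_i$ (plus, for $\cC_{2,b}$, the extra line $x-y+z=0$ coming from $\Sigma'$), e.g.\ $\cE_3\colon xy(x-y)^2+t^3 z^2(z-x)(y-z)=0$ and $\cE_6\colon xz(y-z)^2+t^3 y^2(z-x)(x-y)=0$.

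What each buys: the paper's route recycles the divisor calculus already built in Section~\ref{section_ell_divisors} and never has to write down a general sextic with a tacnode; it hands you the two plane curves in factored form with no equation-solving. Your route is self-contained (it does not require tracking the $24$ named curves on $X$ through $\eta$ and $\tilde\pi$) and has the pleasant feature that it simultaneously produces a Weierstrass model of the underlying rational elliptic surface, but you correctly identify the price: parametrising $\Gamma_2$ with its tacnodal and tangency constraints, and then locating its $I_4^*$ and $I_1$ members, is a genuine computation. Either way the final reduction to long Weierstrass form is the same routine step the paper also leaves implicit.
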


\begin{proof}
    By Proposition \ref{equation_type2}, for each $\cE_i$, we need to find $F_{i,a}$ and $F_{i,b}$ the fibres above the ramification points of the base change, and then calculate the curves $\cC_{i,a}$ and $\cC_{i,b}$ they induce in $\Gamma_i$. Notice that the ADE-types of each fibration determines the Kodaira types of the ramified fibres, as noted in Remark \ref{remark_ADE}. We can take $F_{i,a}$ to be the divisor $L_i$ (see Table \ref{Divisors_elliptic}). Then, $F_{i,b}$ must be disjointed from $F_{i,a}$ and the same Kodaira type of the ramified fibre. For each $\cE_i$, we can choose as follows:
    \begin{align*}
        & F_{2,b} = \Theta_{15} + \Sigma_5 + \Sigma',\\
        & F_{3,b} = \Theta_5 + \Sigma_2 + 2(\textstyle\sum_{i=6}^{12} \Theta_i) + \Theta_{13} + \Sigma_4,\\
        & F_{6,b} = \Sigma_2 + \textstyle\sum_{i=6}^{15} \Theta_i + \Sigma_5.
    \end{align*}

    Here, $\Sigma'$ is a section of $\cE_X$ intersecting $F_b^X$ in the component $\Theta_{15}$ and $F_a^{X}$ in the same point as $\Sigma_5$. Next, we calculate $\tilde{\pi}_*(\eta^* F_{i,a})$ and $\tilde{\pi}_*(\eta^* F_{i,b})$ in $\Pic (\tilde{R})$. The components coming from curves in $\PP^2$ will determine $\cC_{i,a}$ and $\cC_{i,b}$. For example, $\tilde{\pi}_*(\eta^* F_{3,a}) = \tilde{L}_3 = \tilde{\ell}_2 + 2 \tilde{r}_1 + \tilde{\ell}_3 + (4\tilde{E}_1 + 6\tilde{H}_4 + 9R_1 + 5 R_6 + 9S_1 + 6S_2 + 6S_2 + 6S_{11} + 9 S_{12})$. Then, the induced curve $\cF_3$ is given by $\ell_2 \cdot r_1^2 \cdot \ell_3^2 = x(y-z)^2z = 0$.
    
    This process is straightforward for every $F_{i,a}$, as well as for $F_{3,b}$ and $F_{6,b}$. For $F_{2,b}$, we first have to calculate $\tilde{\pi}_*(\eta^*(\Sigma'))$. Since $\Sigma'$ passes through the same fixed point of $\sigma$ as $\Sigma_5$, it follows that $\eta^*(\Sigma') = \tilde{\Sigma}' + \Sigma_{2,5}$, and $\tilde{\Sigma}^{'2} = -3$. As $\tilde{\sigma}$ preserves $\tilde{\Sigma}'$, we calculate $\tilde{\pi}_*(\eta^*(\Sigma')) = 3H' + \tilde{r}_2$, where $H' = \tilde{\pi}(\tilde{\Sigma'})$ corresponds with the section of $\tilde{R}$ coming from the line $x+z= y$ in $\PP^2$ passing through $Q_1$ and $Q_2$, and $H^{'2} = -1$. Consequently, we have $\tilde{\pi}_*(\eta^*F_{2,b}) = \tilde{\ell}_2 + 3 \tilde{H}_2 + 2\tilde{r}_2 + 3H'$.

    We arrive at the following equations for $\cE_2, \cE_3$ and $\cE_6:$
    \begin{align*}
        \cE_2: \ &x^2z^2(x-y)(y-z) + t^3y(z-x)^2(x-y+z)^3 = 0,\\
        \cE_3: \ &xy(x-y)^2 + t^3 z^2(z-x)(y-z) = 0, \\
        \cE_6: \ & xz(y-z)^2 +t^3 y^2(z-x)(x-y) = 0. 
    \end{align*}
    Putting these equations in Weierstrass form, we obtain the result.
\end{proof}

\begin{prop}\label{weierstrass_eq_2}
    The elliptic fibrations $\cE_1$ and $\cE_5$ are given by the following equations in Weierstrass form:
    \small
    
    \begin{align*}
        \cE_1 \colon y^2 &- \tfrac{v^5 - 3v^4 + 3v^3 + v^2}{v^7 - 10v^6 + 45v^5 - 117v^4 + 189v^3 - 189v^2 + 108v - 27}y\\
        & = x^3 - \tfrac{\scriptstyle v^5}{\scriptscriptstyle v^{12} - 17v^{11} + 36v^{10} - 675v^9 + 2310v^8 - 5733v^7 + 10566v^6 - 14553v^5 + 14850v^4 - 10935v^3 + 5508v^2 - 1701v + 243},\\
        \cE_5 \colon y^2 &+ \tfrac{-2v^4 + 4v^3}{\stackanchor{\scriptstyle v^{13} - 16v^{12} + 120v^{11} - 554v^{10} + 1742v^9 - 3903v^8 + 6337v^7}{\scriptstyle - 7435v^6 + 6171v^5 - 3470v^4 + 1229v^3 - 246v^2 + 25v - 1}}y\\
        &= x^3 + \tfrac{v^5}{\stackanchor{\scriptscriptstyle v^{22}{-}27v^{21}{+}351v^{20}{-}2915v^{19}{+}17310v^{18} - 77975v^{17}{+}275920v^{16}{-}783765v^{15}{+}1811095v^{14}{-}3429800v^{13}{+}5338045v^{12}}{\scriptscriptstyle -6819500v^{11}{+}7115140v^{10}{-}6008210v^9{+}4051240v^8{-}2141208v^7{+}865711v^6{-}259643v^5{+}55665v^4{-}8165v^3{+}771v^2{-}42v{+}1}}.
    \end{align*}
    
\end{prop}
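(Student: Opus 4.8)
The plan is to apply Proposition~\ref{equation_type1}. By Theorem~\ref{main_classification} both $\cE_1$ and $\cE_5$ are of type~$1$ in $(X,\sigma)$, so their generic fibres have the form $s^3 x_v = t^3 y_v$, where $v$ is a coordinate on the base of the conic bundle on $\tilde R$ induced by the fibration (equivalently, on the pencil $\Lambda_i$ of Table~\ref{Gamma_i}), the generic member $D_v$ of that pencil is a rational curve with coordinate $[u_1:u_2]$, and $f_v=[x_v:y_v]\colon D_v\to\PP^1$ is the restriction of $\cE_R$ to $D_v$. From Table~\ref{Gamma_i}, $\Lambda_5$ is the pencil of lines through $O=[1:1:1]$ and $\Lambda_1$ the pencil of conics through $P_2,Q_1,Q_3,O$. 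First I would write down an explicit parametrisation $\rho_v\colon\PP^1\to D_v\subset\PP^2$ depending algebraically on $v$ — immediate for the line pencil, and for the conic pencil obtained e.g. by projecting the generic conic from $P_2$, or by working inside the pencil spanned by two of the degenerate conics $(P_2Q_1)(Q_3O)$, $(P_2Q_3)(Q_1O)$, $(P_2O)(Q_1Q_3)$. Composing $\rho_v$ with the cubic map $[x:y:z]\mapsto[\,xyz:(x-y)(y-z)(z-x)\,]$ defining $\cE_R$, and clearing the base points of $\Lambda$ lying on $D_v$ (the three points $P_2,Q_1,Q_3$ in the conic case, none in the line case), one obtains $f_v=[x_v(u_1,u_2):y_v(u_1,u_2)]$ explicitly, a map of degree~$3$.

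The observation that makes this computation short is that $\cG=(x-y)(y-z)(z-x)$ has its three components $\{x=y\}$, $\{y=z\}$, $\{z=x\}$ all passing through $O$, and every member of $\Lambda_i$ also passes through $O$; hence the divisor of $\cG|_{D_v}$ equals $3O$ plus exactly the base points of $\Lambda$ on $D_v$, so after clearing base points $y_v$ becomes a perfect cube $y_v=c(v)\,\ell_O(u_1,u_2)^3$, where $\ell_O$ is the linear form vanishing at $\rho_v^{-1}(O)$, while $x_v\propto\cF|_{D_v}$ is a binary cubic form, generically squarefree. Substituting $\eta=t\,\ell_O/s$ then turns $s^3x_v=t^3y_v$ into the plane cubic $c(v)\,\eta^3=x_v(u_1,u_2)$ over $k(v)$, which is smooth for generic $v$ with $J\equiv 0$ (as it must be by Proposition~\ref{condition_type_1}). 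Being a fibre of the Jacobian fibration $\cE_i$, this cubic carries a $k(v)$-rational point — the one cut out by the zero section $M_i$ of Table~\ref{Divisors_elliptic}, which can be followed through $\eta$, $\tilde\pi$, and the parametrisation. Taking that point as origin, the standard reduction of a pointed plane cubic to Weierstrass form, followed by completing the square (legitimate since $J=0$), yields an equation $y^2=x^3+B(v)$; after fixing an affine coordinate $v$ on the base this is the claimed equation.

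The main obstacle is computational rather than conceptual: the parametrisation, the composition with the cubic map, the base-point clearing with the correct local multiplicities, and especially the birational reduction to Weierstrass form are what produce the large coefficients displayed (degree up to about $12$ for $\cE_1$, up to about $22$ for $\cE_5$), and this last step is best delegated to a computer algebra system. One also has to pin down the normalisation of the base coordinate $v$ so that the output agrees verbatim with the displayed formulae, and it is worth cross-checking the Kodaira types of the fibres of the parametrised family against the $ADE$-types $E_8^{\oplus 2}\oplus A_2$ (fibration~$10.1$, i.e. $\cE_1$) and $E_6^{\oplus 3}$ (fibration~$10.5$, i.e. $\cE_5$) as a sanity check.
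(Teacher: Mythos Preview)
Your proposal is correct and follows essentially the same route as the paper: write an explicit parametrisation $\rho_{i,v}\colon\PP^1\to\PP^2$ of the generic curve in $\Lambda_i$, compose with the rational map $[x{:}y{:}z]\mapsto[xyz:(x{-}y)(y{-}z)(z{-}x)]$, clear the common factors coming from the base points of $\Lambda$ lying on $D_v$, and then pass from $s^3 x_v = t^3 y_v$ to Weierstrass form. Your observation that $y_v$ is a perfect cube $c(v)(u_1-u_2)^3$ because every component of $\cG$ passes through $O$ is exactly what emerges in the paper's computation (both $f_{i,v}$ have second coordinate $v(u_1-u_2)^3$), and it is what makes the reduction to Weierstrass form mechanical. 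One small point: the displayed equations are left in the form $y^2+a_3(v)y=x^3+a_6(v)$, so if you complete the square as you suggest you will obtain an isomorphic but not literally identical model; to match the statement verbatim you should stop before that step. Tracking the section $M_i$ to locate a rational point is unnecessary here, since the cubic $c(v)\eta^3=x_v(u_1,u_2)$ already has obvious $k(v)$-points at the zeros of $x_v$.
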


\begin{proof}
    By Proposition \ref{equation_type1}, we need to calculate the restriction of $\cE_{\tilde{R}} \colon \tilde{R} \to \PP^1$ to each $D_{i,v}$ in $\Lambda_i$, which we write as $f_{i,v} \colon D_{i,v} \to \PP^1$. To do this, we determine, for each $C_{i,v} \subset \PP^2$ induced by $D_{i,v}$ in $\Gamma_i$, a map $\rho_{i,v} \colon \PP^1 \to \PP^2$ such that $\rho_{i,v} (\PP^1) = C_{i,v}$. We can write these maps as follows.
    \begin{align*}
        \rho_{1,v}([u_1{:}u_2]) &= [u_1 - vu_2 : u_1-vu_1 : u_2 - vu_2],\\
        \rho_{5,v}([u_1{:}u_2]) &= [u_1^2 - vu_1u_2 : u_1^2 - vu_2^2 : u_1u_2 - vu_2^2].
    \end{align*}
    By the construction of $R$ as a pencil of cubics, the elliptic fibration $\cE_{\tilde{R}}$ is equivalent in an open set of $\tilde{R}$ to blowing down to $\PP^2$ and composing with the rational map $\varphi([x{:}y{:}z]) = [xyz : (x{-}y)(y{-}z)(z{-}x)]$. Composing each $\rho_{i,v}$ with $\varphi$, we obtain maps $\varphi \circ \rho_{i,v} \colon \PP^1 \cong C_{i,v} \dashrightarrow \PP^1$, which we write as
    \begin{align*}
        \varphi \circ \rho_{1,v}([u_1{:}u_2]) &= [(v-1)(u_1-vu_2)u_1u_2 : v(u_1-u_2)^3],\\
        \varphi \circ \rho_{5,v}([u_1{:}u_2]) &= [(u_1-vu_2)^2(u_1^2 - vu_2^2)u_1u_2 : v(u_1-u_2)^3(u_1-vu_2)u_1u_2].
    \end{align*}
    The maps $f_{1,v}$ and $f_{5,v}$ are the resolutions of indeterminacy of the maps above. Since the coordinates in $\varphi \circ \rho_{1,v}$ have no common factors, we have that $f_{1,v} = \varphi \circ \rho_{1,v}$. The coordinates of $\varphi \circ \rho_{5,v}$ have a common factor of $(u_1-vu_2)u_1u_2$, so we have
    \begin{align*}
        f_{5,v}([u_1{:}u_2]) = [(u_1-vu_2)(u_1^2-vu_2^2) : v(u_1-u_2)^3].
    \end{align*}
    The equations for the fibrations are given by $f_{i,v}([u_1{:}u_2]) = \tau([s{:}t])$ in $\PP^1 \times \PP^1$ over $k(v)$, thus we have
    \begin{align*}
        \cE_1 &\colon s^3 v (u_1-u_2)^3 = (v-1) t^3 (u_1-vu_2)u_1u_2,\\
        \cE_5 &\colon s^3 v (u_1-u_2)^3 = t^3(u_1-vu_2)(u_1^2-vu_2^2).
    \end{align*}
    Putting these equations in Weierstrass form, we obtain the result.
\end{proof}

\begin{remark}
    The action of $\sigma$ can be given explicitly in the Weierstrass equations in the previous propositions. For the equations in Proposition \ref{weierstrass_eq}, $\sigma$ is given by $(x,y,t) \mapsto (x,y,\zeta_3 t)$, and for those in Proposition \ref{weierstrass_eq_2}, $\sigma$ is $(x,y,t) \mapsto (\zeta_3 x,y,t)$.
\end{remark}

\subsection{Other automorphism of $X_3$} \label{other_aut_X3}

In the previous sections, we have studied the automorphism $\sigma \in \Aut(X)$ obtained by constructing $X$ as the base change of a specific rational elliptic surface $R$. This is not the only non-trivial automorphism in $X$; in fact, it admits infinitely many automorphisms (see \cite{Shioda-Inose}, Theorem 5). We have already seen different ways of constructing $X$ as a base change in Proposition \ref{weierstrass_eq}. However, since these constructed fibrations are of type 2 with relation to $\sigma$, the automorphisms induced by them are equal to $\sigma$. In this section, we explore different ways to construct $X$ by base change, and provide an example leading to a distinct non-symplectic automorphism of order 3.

\begin{prop}
    It is not possible to obtain the elliptic fibration $\cE_1 \colon X \to \PP^1$ as a base change of degree 3 of a rational elliptic surface.
\end{prop}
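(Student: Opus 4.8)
The plan is to argue by contradiction, exploiting the fact that a cubic base change forces the resulting fibration to be of type 2 with respect to the induced Galois automorphism, and that type 2 fibrations cannot have $II^*$ fibres.

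First I would recall (from the discussion preceding Proposition \ref{basechange_is_k3}) that if $\cE_1$ were the base change of a rational elliptic surface $\cE_R\colon R\to\PP^1$ by a cubic Galois covering $\tau\colon\PP^1\to\PP^1$, then the deck transformation of $\tau$ lifts to an automorphism $\sigma$ of $X$ of order $3$. Since the quotient $X/\sigma$ is birational to $R$, which is rational, Theorem \ref{quotientK3} shows $\sigma$ is non-symplectic: a symplectic automorphism of order $3$ would have a K3 quotient, which is not rational, and for order $3$ there is no Enriques case. Next I would check that $\cE_1$ is of type 2 in $(X,\sigma)$ in the sense of Definition \ref{classification}: by construction $\sigma$ permutes the fibres of $\cE_1$ compatibly with the deck action of $\tau$ on the base, so it preserves the fibre class, and since $\tau$ has degree $3$ with only finitely many branch points there is some $v\in\PP^1$ with $\sigma(F_v)=F_{v'}$ and $v\neq v'$; hence $\cE_1$ is not of type 1, and it is of type 2.

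Then I would invoke Proposition \ref{fibres_type_2}, item (iv): a Jacobian elliptic fibration of type 2 in $(X,\sigma)$ has no fibres of Kodaira type $II^*$. But $\cE_1$ is fibration 10.1 in Table \ref{all_fibrations}, whose $ADE$-type is $E_8^{\oplus 2}\oplus A_2$, and the two $E_8$ summands correspond to reducible fibres of Kodaira type $II^*$ (the unique Kodaira type with $ADE$-type $E_8$). This contradiction shows $\cE_1$ admits no such base change. As a consistency remark one can add that Theorem \ref{main_classification} already exhibits one automorphism making $\cE_1$ of type 1, and that a type-1 fibration of order $3$ would in any case be constrained by Proposition \ref{condition_type_1}; the point of the present statement is the stronger fact that $\cE_1$ is of type 2 with respect to \emph{no} non-symplectic automorphism of order $3$, which is exactly what Proposition \ref{fibres_type_2}(iv) rules out.

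There is no serious technical obstacle; the only point requiring a little care is making sure "base change of degree $3$" is read as a cyclic (Galois) base change, so that an order-$3$ automorphism is genuinely induced and Proposition \ref{fibres_type_2} applies — this is the notion of base change used throughout the paper. An alternative, self-contained route (worth mentioning but not strictly needed) would bypass Proposition \ref{fibres_type_2} and argue directly from Proposition \ref{basechange_is_k3}: the two fibres over the branch points of $\tau$ must be of type $I_0$ or $I_n^*$, resp. $I_0^*$, $III^*$ or $I_m$, and every other fibre lies in a $\sigma$-orbit of three fibres whose common type (being a degree-$3$ base change of a fibre of a rational elliptic surface) can never be $II^*$; since $\cE_1$ has a $II^*$ fibre, no such configuration exists.
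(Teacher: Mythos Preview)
Your argument is correct and matches the paper's own proof: the paper simply notes that $\cE_1$ has two fibres of type $II^*$ and invokes Proposition \ref{fibres_type_2} to conclude it cannot be of type 2 with respect to any non-symplectic automorphism of order 3, hence cannot arise from a cubic base change. Your version spells out more explicitly why the induced automorphism is non-symplectic and why the fibration would be of type 2, but the core idea is identical.
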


\begin{proof}
    The fibration $\cE_1$ has two fibres of type $II^*$, so by Proposition \ref{fibres_type_2} it cannot be of type 2 with relation to any non-symplectic automorphism of order 3.
\end{proof}

\begin{remark}
    It is possible to construct $\cE_1$ as a base change of order 2 of a rational elliptic surface. If $\cE'_1 \colon R_1 \to \PP^1$ is a rational elliptic surface with one $II^*$ and two $I_1$ fibres, we can take the base change by a map of degree 2 ramifying over an $I_0$ and an $I_1$ fibre, obtaining $\cE_1 \colon X \to \PP^1$. This also induces a non-symplectic involution in $X$.
\end{remark}

\begin{prop}\label{ce5_type_2}
    We can construct $\cE_5 \colon X \to \PP^1$ as the base change of a rational elliptic surface $\cE'_5 \colon R_5 \to \PP^1$ such that the induced automorphism $\sigma_5$ does not act trivially on $\NS X$. Consequently, $\sigma_5$ is different from $\sigma$ as an automorphism of $X$.
\end{prop}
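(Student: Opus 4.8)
The plan is to produce a rational elliptic surface $R_5$ whose only reducible fibres are one of type $IV$ and one of type $IV^*$, and then to check that a cubic base change of $\cE'_5\colon R_5\to\PP^1$ reproduces $(X,\cE_5)$ together with a Galois automorphism that cyclically permutes the three $IV^*$ fibres. A concrete model is $R_5\colon y^2=x^3+t^2(t-1)^4$: by Tate's algorithm it has a fibre of type $IV$ at $t=0$, a fibre of type $IV^*$ at $t=1$, a smooth fibre at $t=\infty$, Euler number $4+8=12$, and Mordell--Weil group $\ZZ/3\ZZ$ generated by $(0,t(t-1)^2)$ --- the rank is $8-8=0$ by Shioda--Tate, and $\ZZ/3\ZZ$ is the only torsion compatible with the $ADE$-type $E_6\oplus A_2$.

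Next I would base change $\cE'_5$ by the cyclic cover $\tau\colon\PP^1\to\PP^1$, $u\mapsto u^3$, totally ramified over $t=0$ and $t=\infty$. Since the fibre of $R_5$ over $t=0$ is of type $IV$ and the one over $t=\infty$ is of type $I_0$, Proposition \ref{basechange_is_k3} guarantees that the resulting surface $X'$ is a K3 surface; its elliptic fibration is $\cE'\colon y^2=x^3+u^6(u^3-1)^4$, which, after absorbing the pole at $u=\infty$ into the Weierstrass coordinates, has smooth fibres over $u=0$ and $u=\infty$ and fibres of type $IV^*$ over the three points of $\tau^{-1}(1)=\{1,\zeta_3,\zeta_3^2\}$. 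Thus $\cE'$ has the same reducible fibres, three of type $IV^*$, and the same Mordell--Weil group $\ZZ/3\ZZ$ (the lift of the torsion section of $R_5$) as fibration $10.5$ in Table \ref{all_fibrations}; arguing exactly as in Proposition \ref{X-X_3-isom} --- the N\'eron--Severi lattices agree by \cite{shioda1989mordell}, and then the Global Torelli Theorem applies --- gives $X'\cong X$. Since $\cJ_1(X)=\cJ_2(X)$ (Corollary D of \cite{BKW}), $\cE'$ is $\cJ_1$-equivalent to $\cE_5$, so after composing the construction with an automorphism of $X$ we may take $\cE'=\cE_5$. Because $X/\sigma_5$ is birational to the rational surface $R_5$, Theorem \ref{quotientK3} shows that the induced order-$3$ Galois automorphism $\sigma_5$ is non-symplectic.

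Finally I would argue that $\sigma_5$ does not act trivially on $\NS X$. As $\cE_5$ arises from a cubic base change, it is of type $2$ with respect to $\sigma_5$ (as noted before Assumption \ref{assump}); concretely, $\sigma_5$ acts on the base $\PP^1$ as the deck transformation of $\tau$, an order-$3$ rotation that cyclically permutes $\tau^{-1}(1)=\{1,\zeta_3,\zeta_3^2\}$ and hence the three $IV^*$ fibres of $\cE_5$. Therefore $\sigma_5$ permutes the irreducible components of these fibres by a nontrivial permutation (it acts as a $3$-cycle on the three fibres), so $\sigma_5^*\neq\id$ on $\NS X$; since $\sigma$ acts trivially on $\NS X$, we conclude $\sigma_5\neq\sigma$. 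The only delicate points are identifying $R_5$ as the extremal rational elliptic surface with fibre configuration $(IV^*,IV)$ and carefully handling the fibre over $u=\infty$ in the base change; once these are settled, the remaining identifications are forced by the $\cJ_2$-classification already established and by Proposition \ref{basechange_is_k3}.
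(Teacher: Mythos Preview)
Your proposal is correct and follows essentially the same approach as the paper: both construct the extremal rational elliptic surface with fibre configuration $(IV^*,IV)$ (the paper via the cubic pencil $\langle z^3, xy(x-y)\rangle$, you via the Weierstrass model $y^2=x^3+t^2(t-1)^4$), perform a cubic base change ramified over the $IV$ fibre and a smooth fibre so that the $IV^*$ fibre is triplicated, identify the result with $X_3$ through its N\'eron--Severi lattice and Torelli, and conclude that the Galois automorphism permutes the three $IV^*$ fibres nontrivially. Your write-up is slightly more explicit (giving the Weierstrass equation and invoking $\cJ_1(X_3)=\cJ_2(X_3)$ to pin down $\cE_5$), but the argument is the same.
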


\begin{proof}
    Let $\Lambda'_5$ be a cubic pencil in $\PP^2$ generated by the cubics $z^3 = 0$ and $xy(x-y) = 0$. This pencil induces a rational elliptic surface $\cE'_5 \colon R_5 \to \PP^1$ with reducible fibres $F_{5,a}$ of type $IV^*$ and $F_{5,b}$ of type $IV$. The ADE-type of this fibration is $E_6 \oplus A_2$, and by \cite{Shioda-Oguiso} we have $\MW \cE^{'}_5 = \ZZ / 3$.

    \begin{figure}[ht]
        \centering
        \tikzfig{R5_fibres}
        \caption{Reducible fibres of $\cE'_5$}
        \label{R5_fibres}
    \end{figure}
    
    If $\tau' \colon \PP^1 \to \PP^1$ is a cubic Galois cover totally ramified at $b$ and another point $u \in \PP^1$, then the base change of $\cE^{'}_5$ by $\tau'$ is a K3 surface $\cE^{''}_5 \colon X_5 \to \PP^1$ with three reducible fibres $F^{X}_{5,a_1}, F^{X}_{5,a_2}, F^{X}_{5,a_3}$ of type $IV^*$, where $\tau'(a_i) = a$.  The fibre $F^{X}_{5,b_1}$ with $\tau'(b_1) = b$ is of type $I_0$, that is, it is a smooth fibre (see \cite{miranda}, Table VI.4.1). The $ADE$-type of $\cE^{''}_5$ is $E_6^{\oplus 3}$, and by \cite{Shimada} we have $\MW \cE^{''}_5 = \ZZ/3$. By the same argument as \ref{X-X_3-isom}, $X_5$ is isomorphic to $X$.

\begin{figure}[ht]
    \centering
    \tikzfig{X5_fibres}
    \caption{Reducible fibres of $\cE_5$ and the action of $\sigma_5$}
    \label{X5_fibres}
\end{figure}

    Let $\sigma_5$ be the non-symplectic automorphism induced by this base change. By construction, we have $\sigma_5(F^{X}_{5,a_1}) = F^{X}_{5,a_2}$. Then, for any component $\Theta_{i,a_1}$ of $F^{X}_{5,a_1}$, $\sigma_5(\Theta_{i,a_1})$ must be a component of $F^{X}_{5,a_2}$. However, by Shioda-Tate, irreducible components of distinct reducible fibres have distinct classes in $\NS X$.
\end{proof}

Remember that since $\sigma$ acts trivially in $\NS X$, then $(X,\sigma)$ does not admit elliptic fibrations of type 3. This is not true for $(X,\sigma_5)$, and in what follows we provide an example.

\begin{prop}
    Let $L$ and $M$ be the following divisors of $X$
    \begin{align*}
        L &= \sigma_0 + 2 \Theta_{0,a_1} + 3 \Theta_{3,a_1} + 4 \Theta_{6,a_1} + 2 \Theta_{5,a_1} + 3 \Theta_{4,a_1} + 2 \Theta_{1,a_1} + \Sigma_1,\\
        M &= \Theta_{0,a_2}.
    \end{align*}
Then, the pair $(L,M)$ defines an elliptic fibration of type 3 in $(X,\sigma_5)$.
\end{prop}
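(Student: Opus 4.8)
The plan is to verify that $(L,M)$ is the datum of an elliptic fibration, to recognise its singular fibre configuration, and then to rule out both type $1$ and type $2$ with respect to $\sigma_5$; since $\cE_{L,M}$ is of type $3$ with respect to $\sigma_5$ exactly when it is of neither of the other two types, this finishes the proof.

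First I would check the numerical hypotheses of Proposition \ref{K3_elliptic_fibration}. The support of $L$ consists of six of the seven components of the $IV^{*}$-fibre $F^X_{5,a_1}$ (arranged in the $\widetilde{E}_6$ pattern, with $\Theta_{6,a_1}$ the trivalent node) together with the two $(-2)$-curves $\Sigma_0,\Sigma_1$ attached at the tips of two of its arms; with the multiplicities $1,2,3,4,3,2,1$ along the chain $\Sigma_0,\Theta_{0,a_1},\Theta_{3,a_1},\Theta_{6,a_1},\Theta_{4,a_1},\Theta_{5,a_1},\Sigma_1$ and the $2$ on $\Theta_{1,a_1}$, this is exactly the Kodaira cycle of a fibre of type $III^{*}$ (dual graph $\widetilde{E}_7$), so $L^{2}=0$ and $L\cdot K_X=0$. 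The class $M=\Theta_{0,a_2}$ is a $(-2)$-curve, and since $F^X_{5,a_2}$ is disjoint from $F^X_{5,a_1}$ one has $L\cdot M=(\Sigma_0+\Sigma_1)\cdot\Theta_{0,a_2}=1$. Hence $(L,M)$ defines a Jacobian elliptic fibration $\cE_{L,M}\colon X\to\PP^1$ with fibre class $L$ and zero section $M$; because its $ADE$-type contains the $E_7$ contributed by the $III^{*}$-fibre, $\cE_{L,M}$ is the fibration $10.3$ of Table \ref{all_fibrations}.

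Next, $\cE_{L,M}$ cannot be of type $1$ with respect to $\sigma_5$: by Proposition \ref{condition_type_1} the singular fibres of a type $1$ fibration for a non-symplectic automorphism of order $3$ are of type $I_0^{*},II,IV,II^{*}$ or $IV^{*}$ only, whereas $\cE_{L,M}$ has a $III^{*}$-fibre. Suppose then that $\cE_{L,M}$ were of type $2$ with respect to $\sigma_5$. By Proposition \ref{fibres_type_2} the $III^{*}$-fibre of a type $2$ fibration can occur only as the distinguished fibre $F_b^{X}$ lying over a branch point of the associated cubic base change (it is not among the admissible types for $F_a^{X}$, nor among the types allowed for the three fibres forming a $\sigma_5$-orbit). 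Consequently $\sigma_5(L)=L$ \emph{as a divisor}, so $\sigma_5$ permutes the irreducible components of $L$. But $\sigma_5$ carries each component $\Theta_{j,a_1}$ of $F^X_{5,a_1}$ to a component of $\sigma_5(F^X_{5,a_1})=F^X_{5,a_2}$, and no component of $L$ lies on $F^X_{5,a_2}$: the $\Theta_{j,a_1}$ lie on the disjoint fibre $F^X_{5,a_1}$, while $\Sigma_0$ and $\Sigma_1$ each meet $F^X_{5,a_1}$ (being adjacent to $\Theta_{0,a_1}$, resp.\ $\Theta_{5,a_1}$, in the $III^{*}$-cycle) and so cannot be components of $F^X_{5,a_2}$. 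This contradiction excludes type $2$, and therefore $\cE_{L,M}$ is of type $3$ in $(X,\sigma_5)$.

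I expect the only genuinely fiddly point to be the verification in the first step: confirming from the explicit intersection pattern of the curves $\Theta_{j,a_1},\Sigma_0,\Sigma_1$ (read off the construction of $X_5$ and Figure \ref{X5_fibres}) that $L$ really is the affine $E_7$ cycle and that $\Sigma_0,\Sigma_1$ meet precisely the asserted components — this is routine bookkeeping. The conceptual crux is the observation in the last step that, although preserving the class $[L]\in\NS X$ is a priori weaker than preserving the divisor $L$, Proposition \ref{fibres_type_2} forces the $III^{*}$-fibre of a type $2$ fibration to be a ramification fibre, which \emph{is} $\sigma_5$-stable as a set; this is exactly what lets $\sigma_5$ act on the components of $L$ and collide with the fact that $\sigma_5$ transports the components of $F^X_{5,a_1}$ off $L$.
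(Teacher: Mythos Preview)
Your argument is correct, but it takes a genuinely different route from the paper's own proof. The paper proceeds directly from the definition of type~3: after checking $L^2=0$, $M^2=-2$, $L\cdot M=1$, it simply writes down
\[
\sigma_5(L)=\Sigma_0+2\Theta_{0,a_2}+3\Theta_{3,a_2}+4\Theta_{6,a_2}+2\Theta_{5,a_2}+3\Theta_{4,a_2}+2\Theta_{1,a_2}+\Sigma_1
\]
and computes $L\cdot\sigma_5(L)=4\neq 0=L^2$, which immediately shows $[\sigma_5(L)]\neq[L]$ in $\NS X$. That is the entire argument.

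Your approach instead recognises $L$ as a $III^*$ fibre and invokes the structural Propositions~\ref{condition_type_1} and~\ref{fibres_type_2} to exclude types~1 and~2. This works, and it nicely illustrates how those propositions can be deployed; in particular, your observation that a $III^*$ fibre in a type~2 fibration is forced to be the ramification fibre $F_b^X$, hence $\sigma_5$-stable \emph{as a divisor}, is exactly the leverage needed to derive a contradiction from $\sigma_5(\Theta_{j,a_1})\subset F^X_{5,a_2}$. The trade-off is that the paper's route is shorter and avoids any appeal to the classification of singular fibres: a single intersection number settles the matter. Your route is more conceptual but logically heavier, since it rests on Proposition~\ref{fibres_type_2}, which in turn depends on the base-change analysis of Section~\ref{K3_surfaces_ns_order3}.
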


\begin{proof}
    Since $L^2 = 0$, $M^2 = -2$ and $LM = 1$, $(L,M)$ induces an elliptic fibration $\varphi_{|L|} \colon X \to \PP^1$ where $L$ is the class of the fibres in $\NS X$. Through the action of $\sigma_5$, we obtain
    \begin{align*}
        \sigma_5(L) = \sigma_0 + 2 \Theta_{0,a_2} + 3 \Theta_{3,a_2} + 4 \Theta_{6,a_2} + 2 \Theta_{5,a_2} + 3 \Theta_{4,a_2} + 2 \Theta_{1,a_2} + \Sigma_1.
    \end{align*}
    We can check that $L$ and $\sigma_5(L)$ are distinct in $\NS X$ by checking their intersection. Indeed, $L \cdot \sigma_5(L) = 4$, while $L^2 = 0$. Then, $\sigma_5$ does not fix the fibre class of $\varphi_{|L|}$, so $\varphi_{|L|}$ is of type 3 in $(X,\sigma_5)$.
\end{proof}

\bibliographystyle{amsalpha}
\bibliography{bibliography}

\newpage

\appendix
\section{Orthogonal complements of primitive embeddings}\label{appendix}

In Section \ref{section_nishiyama}, part of the application of the Kneser--Nishiyama method requires the calculation of the orthogonal complements of primitive embeddings of lattices. In this appendix we show these calculations explicitly.

Firstly, we establish some notation. Let $a_i, d_i$ and $e_i$ denote the canonical generators of the $A_n$, $D_n$ and $E_n$ lattices, respectively. For ease of notation, we define elements $\alpha_i$ in lattices of type $A_n$ as follows.

\begin{align*}
    &\alpha_i = a_{3i-2} + 2a_{3i-1} + 3a_{3i} + 2a_{3i+1} + a_{3i+2},\\
    &\alpha'_i = a_{3i-2} + 2a_{3i-1} + 3a_{3i}.\\
\end{align*}

Then, letting $\delta_0 = d_1$, we define the following elements of $D_n$ recursively.

\begin{align*}
    &\delta_i = \delta_{i-1} + d_{3i-1} + 2d_{3i} + 2d_{3i+1} + d_{3i+2},\\
    &\delta'_i = 2\delta_{i-1} + d_{3i-1} + d_{3_i} + d_{3i+1},\\
    &\delta''_i = \delta_{i-1} + d_{3i-1} + 2d_{3i} + 2d_{3i+1}.\\
\end{align*}

Finally, we denote a general element of $E_n$ as follows.

\begin{align*}
    \tiny{\stackanchor{}{\lambda_2}\stackanchor{}{\lambda_3}\stackanchor{\lambda_1}{\lambda_4}\stackanchor{}{\lambda_5}\stackanchor{}{\lambda_6}} &= \sum_{i=1}^6 \lambda_i e_i,\\
    \tiny{\stackanchor{}{\lambda_2}\stackanchor{}{\lambda_3}\stackanchor{\lambda_1}{\lambda_4}\stackanchor{}{\lambda_5}\stackanchor{}{\lambda_6}\stackanchor{}{\lambda_7}} &= \sum_{i=1}^7 \lambda_i e_i,\\
    \tiny{\stackanchor{}{\lambda_2}\stackanchor{}{\lambda_3}\stackanchor{\lambda_1}{\lambda_4}\stackanchor{}{\lambda_5}\stackanchor{}{\lambda_6}\stackanchor{}{\lambda_7}\stackanchor{}{\lambda_8}} &= \sum_{i=1}^8 \lambda_i e_i.
\end{align*}

In Table \ref{embeddings}, for pairs of lattices $N, L$, we explicitly present the unique primitive embedding $\phi \colon N \to L$ (up to an action of the Weyl group of $L$), the orthogonal complement $\phi(N)^{\perp L}$ and its root type $(\phi(N)^{\perp L})_\mathrm{root}$.

\def\block(#1,#2)#3{\multicolumn{#2}{c}{\multirow{#1}{*}{$ #3 $}}}

\begin{landscape}
\begin{table}[ht]
    \centering
    \caption{Primitive embeddings and orthogonal complements}
    \label{embeddings}
    \begin{tabular}{|c|c|c|c|c|}
        \hline
        $L$ & $N$ & $\phi \colon N \to L$ & $\phi(N)^{\perp L}$ & $(\phi(N)^{\perp L})_\mathrm{root}$ \\
        \hline
        $A_{11}$ & $A_2$ & $\langle a_1, a_2 \rangle$ &
        $\left( \begin{array}{c|cccc}
        -12 & 3 & 0 & \cdots & 0\\
        \hline
        3 & \block(4,4){\text{\Large $A_8$}}\\
        0 & & & &\\
        \vdots & & & &\\
        0 & & & &
        \end{array} \right) =
             \langle \alpha'_1,a_4,{.}{.}{.}, a_{11} \rangle$ & $A_8$\\
        \hdashline
        & $A_2^{\oplus 2}$ & $\langle a_1, a_2 \rangle{\oplus}\langle a_4, a_5 \rangle$ &
        $\left( \begin{array}{cc|cccc}
        -6 & 3 & 0 & 0 & \cdots & 0\\
        3 & -12 & 3 & 0 & \cdots & 0\\
        \hline
        0 & 3 & \block(4,4){\text{\Large $A_5$}}\\
        0 & 0 & & & &\\
        \vdots & \vdots & & & &\\
        0 & 0 & & & &
        \end{array} \right) = \langle \alpha_1, \alpha'_2,a_7, {.}{.}{.} , a_{11} \rangle$ & $A_5$\\
        \hdashline
        & $A_2^{\oplus 3}$ & $\langle a_1, a_2 \rangle{\oplus}\langle a_4, a_5 \rangle{\oplus}\langle a_7,a_8 \rangle$ & $\left( \begin{array}{ccc|cc}
        -6 & 3 & 0 & 0 & 0 \\
        3 & -6 & 3 & 0 & 0 \\
        0 & 3 & -12 & 3 & 0 \\
        \hline
        0 & 0 & 3 & \block(2,2){\text{\Large $A_2$}}\\
        0 & 0 & 0 & & \\
        \end{array} \right) = \langle \alpha_1,\alpha_2,\alpha'_3, a_{10}, a_{11}\rangle$ & $A_2$\\
        \hdashline
        & $A_2^{\oplus 4}$ & $\bigoplus_{i=0}^{3} \langle a_{3i+1}, a_{3i+2} \rangle$ & $A_3(3) = \left( \begin{array}{cccc}
        -6 & 3 & 0\\
        3 & -6 & 3\\
        0 & 3 & -6\\
        \end{array} \right) = \langle \alpha_1, \alpha_2, \alpha_3 \rangle$ & $0$\\
        \hline
        $A_{17}$ & $A_2$ & $\langle a_1, a_2 \rangle$ & $\left( \begin{array}{c|cccc}
        -12 & 3 & 0 & \cdots & 0\\
        \hline
        3 & \block(4,4){\text{\Large $A_{14}$}}\\
        0 & & & &\\
        \vdots & & & &\\
        0 & & & &
        \end{array} \right) =\langle \alpha'_1,a_4,{.}{.}{.}, a_{17} \rangle$ & $A_{14}$\\
        \hline
    \end{tabular}
\end{table}

\newpage

\begin{table}
    \begin{tabular}{|c|c|c|c|c|}
        \hline
        $L$ & $N$ & $\phi \colon N \to L$ & $\phi(N)^{\perp L}$ & $(\phi(N)^{\perp L})_\mathrm{root}$ \\
        \hline
       $A_{17}$ & $A_2^{\oplus 2}$ & $\langle a_1, a_2 \rangle{\oplus}\langle a_4, a_5 \rangle$ & $\left( \begin{array}{cc|cccc}
        -6 & 3 & 0 & 0 & \cdots & 0\\
        3 & -12 & 3 & 0 & \cdots & 0\\
        \hline
        0 & 3 & \block(4,4){\text{\Large $A_{11}$}}\\
        0 & 0 & & & &\\
        \vdots & \vdots & & & &\\
        0 & 0 & & & &
        \end{array} \right) = \langle \alpha_1, \alpha'_2,a_7, {.}{.}{.} , a_{17} \rangle$ & $A_{11}$\\
        \hdashline
        & $A_2^{\oplus 3}$ & $\langle a_1, a_2 \rangle{\oplus}\langle a_4, a_5 \rangle{\oplus}\langle a_7,a_8 \rangle$ & $\left( \begin{array}{ccc|cccc}
        -6 & 3 & 0 & 0 & 0 & \cdots & 0 \\
        3 & -6 & 3 & 0 & 0 & \cdots & 0 \\
        0 & 3 & -12 & 3 & 0 & \cdots & 0 \\
        \hline
        0 & 0 & 3 & \block(4,4){\text{\Large $A_{8}$}}\\
        0 & 0 & 0 & & & & \\
        \vdots & \vdots & \vdots & & & &\\
        0 & 0 & 0 & & & &
        \end{array} \right) = \langle \alpha_1,\alpha_2,\alpha'_3, a_{10}, {.}{.}{.}, a_{17}\rangle$ & $ A_8$\\
        \hdashline
        & $A_2^{\oplus 4}$ & $\bigoplus_{i=0}^{3} \langle a_{3i+1}, a_{3i+2} \rangle$ & $\left( \begin{array}{cccc|cccc}
        -6 & 3 & 0 & 0 & 0 & 0 & \cdots & 0 \\
        3 & -6 & 3 & 0 & 0 & 0 & \cdots & 0 \\
        0 & 3 & -6 & 3 & 0 & 0 & \cdots & 0 \\
        0 & 0 & 3 & -12 & 3 & 0 & \cdots & 0 \\
        \hline
        0 & 0 & 0 & 3 & \block(4,4){\text{\Large $A_{5}$}}\\
        0 & 0 & 0 & 0 & & & & \\
        \vdots & \vdots & \vdots & \vdots & & & &\\
        0 & 0 & 0 & 0 & & & &
        \end{array} \right) = \begin{array}{c}
            \langle \alpha_1, \alpha_2, \alpha_3, \alpha'_4,\\
            a_{13},{.}{.}{.}, a_{17}\rangle
        \end{array}$ & $A_5$\\
        \hdashline
        & $A_2^{\oplus 5}$ & $\bigoplus_{i=0}^{4} \langle a_{3i+1}, a_{3i+2} \rangle$ & $\left( \begin{array}{ccccc|cc}
        -6 & 3 & 0 & 0 & 0 & 0 & 0\\
        3 & -6 & 3 & 0 & 0 & 0 & 0\\
        0 & 3 & -6 & 3 & 0 & 0 & 0\\
        0 & 0 & 3 & -6 & 3 & 0 & 0\\
        0 & 0 & 0 & 3 & -12 & 3 & 0\\
        \hline
        0 & 0 & 0 & 0 & 3 & \block(2,2){\text{\Large $A_{2}$}}\\
        0 & 0 & 0 & 0 & 0 & & \\
        \end{array} \right) = \begin{array}{c}
            \langle \alpha_1, \alpha_2, \alpha_3, \alpha_4,\\ \alpha'_5,a_{16},a_{17}\rangle
        \end{array}$ & $A_2$\\
        \hline
    \end{tabular}
\end{table}

\newpage

\begin{table}
    \begin{tabular}{|c|c|c|c|c|}
        \hline
        $L$ & $N$ & $\phi \colon N \to L$ & $\phi(N)^{\perp L}$ & $(\phi(N)^{\perp L})_\mathrm{root}$ \\
        \hline
        $A_{17}$ & $A_2^{\oplus 6}$ & $\bigoplus_{i=0}^{5} \langle a_{3i+1}, a_{3i+2} \rangle$ & $A_5(3) = \begin{pmatrix}
            -6 & 3 & 0 & 0 & 0\\
            3 & -6 & 3 & 0 & 0\\
            0 & 3 & -6 & 3 & 0\\
            0 & 0 & 3 & -6 & 3\\
            0 & 0 & 0 & 3 & -6
        \end{pmatrix} = \begin{array}{c}
            \langle \alpha_1, \alpha_2, \alpha_3,\\
            \alpha_4, \alpha_5\rangle
        \end{array}$ & $0$\\
        \hline
        $D_7$ & $A_2$ & $\langle d_2, d_3 \rangle$ & $\left( \begin{array}{c|cccc}
            -4 & -1 & 1 & 0 & 0 \\
            \hline
            -1 & \block(4,4){\text{\Large $D_4$}}\\
            1 & & & &\\
            0 & & & &\\
            0 & & & &\\
        \end{array}\right) = \langle \delta'_1, \delta_1, d_5, d_6, d_7\rangle$ & $D_4$\\
        \hdashline
        & $A_2^{\oplus 2}$ & $\langle d_2, d_3 \rangle \oplus \langle d_5, d_6 \rangle$ & $\begin{pmatrix}
            -4 & -1 & 0 \\
            -1 & -4 & -2\\
            0 & -2 & -4
        \end{pmatrix} = \langle \delta'_1, \delta'_2, \delta''_2\rangle$ & $0$ \\
        \hline
        $D_{10}$ & $A_2$ & $\langle d_2, d_3 \rangle$ & $\left( \begin{array}{c|ccccc}
            -4 & -1 & 1 & 0 & \cdots & 0 \\
            \hline
            -1 & \block(5,5){\text{\Large $D_7$}}\\
            1 & & & & &\\
            0 & & & & &\\
            \vdots & & & & &\\
            0 & & & & &\\
        \end{array}\right) = \langle \delta'_1, \delta_1, d_5, \ldots, d_{10}\rangle$ & $D_7$\\
        \hdashline
        & $A_2^{\oplus 2}$ & $\langle d_2, d_3 \rangle \oplus \langle d_5, d_6 \rangle$ & $\left( \begin{array}{cc|cccc}
            -4 & -1 & 0 & 0 & 0 & 0\\
            -1 & -4 & -1 & 1 & 0 & 0\\
            \hline
            0 & -1 & \block(4,4){\text{\Large $D_4$}}\\
            0 & 1 & & & &\\
            0 & 0 & & & &\\
            0 & 0 & & & &\\
        \end{array} \right) = \begin{array}{c}
            \langle \delta'_1 , \delta'_2, \delta_2,\\
            d_8, d_9, d_{10} \rangle
        \end{array}$ & $D_4$ \\
        \hdashline
        & $A_2^{\oplus 3}$ & $\langle d_2, d_3 \rangle \oplus \langle d_5, d_6 \rangle \oplus \langle d_8, d_9 \rangle$ & $\begin{pmatrix}
            -4 & -1 & 0 & 0\\
            -1 & -4 & -1 & 0\\
            0 & -1 & -4 & -2\\
            0 & 0 & -2 & -4\\
        \end{pmatrix} = \langle \delta'_1, \delta'_2, \delta'_3, \delta''_3\rangle$ & $0$\\
        \hline
    \end{tabular}
\end{table}

\newpage

\begin{table}
    \begin{tabular}{|c|c|c|c|c|}
        \hline
        $L$ & $N$ & $\phi \colon N \to L$ & $\phi(N)^{\perp L}$ & $(\phi(N)^{\perp L})_\mathrm{root}$ \\
        \hline
        $D_{16}$ & $A_2$ & $\langle d_2, d_3 \rangle$ & $\left( \begin{array}{c|ccccc}
            -4 & -1 & 1 & 0 & \cdots & 0 \\
            \hline
            -1 & \block(5,5){\text{\Large $D_{13}$}}\\
            1 & & & & &\\
            0 & & & & &\\
            \vdots & & & & &\\
            0 & & & & &\\
        \end{array}\right) = \langle \delta'_1, \delta_1, d_5, \ldots, d_{16}\rangle$ & $D_{13}$\\
        \hdashline
        & $A_2^{\oplus 2}$ & $\langle d_2, d_3 \rangle \oplus \langle d_5, d_6 \rangle$ & $\left( \begin{array}{cc|ccccc}
            -4 & -1 & 0 & 0 & 0 & \cdots & 0\\
            -1 & -4 & -1 & 1 & 0 & \cdots & 0\\
            \hline
            0 & -1 & \block(5,5){\text{\Large $D_{10}$}}\\
            0 & 1 & & & & &\\
            0 & 0 & & & & &\\
            \vdots & \vdots & & & & &\\
            0 & 0 & & & & &\\
        \end{array} \right) = \begin{array}{c}
            \langle \delta'_1 , \delta'_2, \delta_2,\\
            d_8, \ldots, d_{16} \rangle
        \end{array}$ & $D_{10}$\\
        \hdashline
        & $A_2^{\oplus 3}$ & $\langle d_2, d_3 \rangle \oplus \langle d_5, d_6 \rangle \oplus \langle d_8, d_9 \rangle$ & $\left( \begin{array}{ccc|ccccc}
            -4 & -1 & 0 & 0 & 0 & 0 & \cdots & 0\\
            -1 & -4 & -1 & 0 & 0 & 0 & \cdots & 0\\
            0 & -1 & -4 & -1 & 1 & 0 & \cdots & 0\\
            \hline
            0 & 0 & -1 & \block(5,5){\text{\Large $D_{7}$}}\\
            0 & 0 & 1 & & & & &\\
            0 & 0 & 0 & & & & &\\
            \vdots & \vdots & \vdots & & & & &\\
            0 & 0 & 0 & & & & &\\
        \end{array} \right) = \begin{array}{c}
            \langle \delta'_1 , \delta'_2, \delta'_3, \delta_3,\\
            d_{11}, \ldots, d_{16} \rangle
        \end{array}$ & $D_7$ \\
        \hdashline
        & $A_2^{\oplus 4}$ & $\bigoplus_{i=1}^{4} \langle d_{3i-1}, d_{3i} \rangle$ & $\left( \begin{array}{cccc|cccc}
            -4 & -1 & 0 & 0 & 0 & 0 & 0 & 0\\
            -1 & -4 & -1 & 0 & 0 & 0 & 0 & 0\\
            0 & -1 & -4 & -1 & 0 & 0 & 0 & 0\\
            0 & 0 & -1 & -4 & -1 & 1 & 0 & 0\\
            \hline
            0 & 0 & 0 & -1 & \block(4,4){\text{\Large $D_{4}$}}\\
            0 & 0 & 0 & 1 & & & &\\
            0 & 0 & 0 & 0 & & & &\\
            0 & 0 & 0 & 0 & & & &\\
        \end{array} \right) = \begin{array}{c}
            \langle \delta'_1 , \delta'_2, \delta'_3, \delta'_4\\
            \delta_4, d_{14}, d_{15}, d_{16} \rangle
        \end{array}$ & $D_4$ \\
        \hline
    \end{tabular}
\end{table}

\newpage

\begin{table}
    \begin{tabular}{|c|c|c|c|c|}
        \hline
        $L$ & $N$ & $\phi \colon N \to L$ & $\phi(N)^{\perp L}$ & $(\phi(N)^{\perp L})_\mathrm{root}$ \\
        \hline
        $D_{16}$ & $A_2^{\oplus 5}$ & $\bigoplus_{i=1}^{5} \langle d_{3i-1}, d_{3i} \rangle$ & $\begin{pmatrix}
            -4 & -1 & 0 & 0 & 0 & 0\\
            -1 & -4 & -1 & 0 & 0 & 0\\
            0 & -1 & -4 & -1 & 0 & 0\\
            0 & 0 & -1 & -4 & -1 & 0\\
            0 & 0 & 0 & -1 & -4 & -2\\
            0 & 0 & 0 & 0 & -2 & -4\\
        \end{pmatrix} = \begin{array}{c}
            \langle \delta'_1, \delta'_2, \delta'_3,\\
            \delta'_4, \delta''_5, \delta''_5 \rangle
        \end{array}$ & $0$\\
        \hline
        $E_6$ & $A_2$ & $\langle e_2, e_3 \rangle$ & $\begin{matrix}\\A_2^{\oplus 2} = \langle e_5, e_6 \rangle \oplus \langle e_1, \tiny{\stackanchor{}{1}\stackanchor{}{2}\stackanchor{2}{3}\stackanchor{}{2}\stackanchor{}{1}} \rangle \\ & \end{matrix}$ & $A_2^{\oplus 2}$\\
        \hdashline
        & $A_2^{\oplus 2}$ & $\langle e_2, e_3 \rangle \oplus \langle e_5, e_6 \rangle$ & $\begin{matrix}\\ A_2 = \langle e_1, \tiny{\stackanchor{}{1}\stackanchor{}{2}\stackanchor{2}{3}\stackanchor{}{2}\stackanchor{}{1}} \rangle \\ & \end{matrix}$ & $A_2$\\
        \hdashline
        & $E_6$ & $\langle e_1, {.}{.}{.}, e_6 \rangle$ & $0$ & $0$ \\
        \hline
        $E_7$ & $A_2 $ & $\langle e_2, e_3 \rangle$ & $\begin{matrix}\\ A_5 = \langle e_5, e_6, e_7,\tiny{\stackanchor{}{1}\stackanchor{}{2}\stackanchor{2}{3}\stackanchor{}{2}\stackanchor{}{1}\stackanchor{}{0}}, e_1 \rangle \\ & \end{matrix}$ & $A_5$\\
        \hdashline
        & $A_2^{\oplus 2}$ & $\langle e_2, e_3 \rangle \oplus \langle e_5, e_6 \rangle$ & $\begin{matrix}\\ (-6) \oplus A_2 = \tiny{\langle \stackanchor{}{2}\stackanchor{}{4}\stackanchor{3}{6}\stackanchor{}{5}\stackanchor{}{4}\stackanchor{}{3}}\rangle \oplus \langle e_1, \tiny{\stackanchor{}{1}\stackanchor{}{2}\stackanchor{2}{3}\stackanchor{}{2}\stackanchor{}{1}\stackanchor{}{0}}\rangle \\ & \end{matrix}$ & $A_2$\\
        \hdashline
        & $E_6$ & $\langle e_1, {.}{.}{.}, e_6 \rangle$ & $\begin{matrix}\\ (-6) = \langle \tiny{\stackanchor{}{2}\stackanchor{}{4}\stackanchor{3}{6}\stackanchor{}{5}\stackanchor{}{4}\stackanchor{}{3}}\rangle \\ & \end{matrix}$ & $0$ \\
        \hline
        $E_8$ & $A_2 $ & $\langle e_2, e_3 \rangle$ & $\begin{matrix}\\ E_6 = \langle e_8, e_5, e_6, e_7, \tiny{\stackanchor{}{1}\stackanchor{}{2}\stackanchor{2}{3}\stackanchor{}{2}\stackanchor{}{1}\stackanchor{}{0}\stackanchor{}{0}}, e_1 \rangle \\ & \end{matrix}$ & $E_6$\\
        \hdashline
        & $A_2^{\oplus 2}$ & $\langle e_2, e_3 \rangle \oplus \langle e_5, e_6 \rangle$ & $\begin{matrix}\\ A_2^{\oplus 2} = \langle e_1, \tiny{\stackanchor{}{1}\stackanchor{}{2}\stackanchor{2}{3}\stackanchor{}{2}\stackanchor{}{1}\stackanchor{}{0}\stackanchor{}{0}}\rangle \oplus \langle e_8, \tiny{\stackanchor{}{2}\stackanchor{}{4}\stackanchor{3}{6}\stackanchor{}{5}\stackanchor{}{4}\stackanchor{}{3}\stackanchor{}{2}}\rangle \\ & \end{matrix}$ & $A_2^{\oplus 2}$\\
        \hdashline
        & $E_6$ & $\langle e_1, {.}{.}{.}, e_6 \rangle$ & $\begin{matrix}\\ A_2 = \langle e_8, \tiny{\stackanchor{}{2}\stackanchor{}{4}\stackanchor{3}{6}\stackanchor{}{5}\stackanchor{}{4}\stackanchor{}{3}\stackanchor{}{2}}\rangle \\ & \end{matrix}$ & $A_2$\\
        \hline
    \end{tabular}
\end{table}
\end{landscape}

\end{document}